\documentclass[ leqno,final]{article}

\usepackage{amsfonts,amsmath,amsthm}
\usepackage{epsfig,graphicx,showkeys,color,a4wide}

\def\R{\mathbb{R}}

\def\N{\mathbb{N}}

\def\e{\varepsilon}
\def\a{\alpha}

\def \vs{\vspace*{0.2cm}}
\newcommand{\Lip}{{\textrm{Lip}}}

\newtheorem{theo}{Theorem}[section]
\newtheorem{lem}[theo]{Lemma}
\newtheorem{pro}[theo]{Proposition}
\newtheorem{cor}[theo]{Corollary}

\theoremstyle{definition}
\newtheorem{defi}[theo]{Definition}
\newtheorem{rem}[theo]{Remark}

\newcommand{\ba}{\begin{eqnarray}}
\newcommand{\ea}{\end{eqnarray}}

\numberwithin{equation}{section}
\usepackage{color}


\begin{document}

\title{\bf Existence and uniqueness of traveling wave\\ for accelerated Frenkel-Kontorova model}
\date{2014}
\author{\renewcommand{\thefootnote}{\arabic{footnote}}
N. Forcadel\footnotemark[1],
A. Ghorbel\footnotemark[2]\-{$~^{,}$\footnotemark[3]},
S. Walha\footnotemark[1]$~^{,}$\footnotemark[2]}

\footnotetext[1]{INSA de Rouen, Normandie Universit\'e, Labo. de Math\'ematiques de l'INSA - LMI (EA 3226 - FR CNRS 3335)
685 Avenue de l'Universit\'e, 76801 St Etienne du Rouvray cedex.
France}
\footnotetext[2]{University of Sfax, Faculty of Sciences, Laboratory of  "Stability and control of systems, nonlinear PDEs", LR 13 ES 21.}
\footnotetext[3]{University of Sfax, Higher Institute of business administration of Sfax,
Airport Road Km 4, PB 1013, 3018 Sfax, Tunisia; e-mail: ghorbel@cermics.enpc.fr}

\maketitle

\vspace{5pt}

\begin{abstract}
In this paper, we study the existence and uniqueness of traveling wave solution for the accelerated Frenkel-Kontorova model. This model consists in a
 system of ODE that describes the motion particles in interaction. The most important applications we have in mind is the motion of crystal defects called dislocations. For this model, we prove the existence of 
traveling wave solutions under very weak assumptions. The uniqueness of the velocity is also studied as well as the uniqueness of the profile which used different types of strong maximum principle. As far as we know, this is the first result concerning traveling waves for accelerated, spatially discrete system.
\end{abstract}

 \noindent {\small \bf AMS Classification:} {\small 35B27, 35F20, 45K05, 47G20, 49L25, 35B10.}
\smallskip

 \noindent{\small{\bf{Keywords:}}} {\small  Frenkel-Kontorova models, Traveling waves, Viscosity solutions, Maximum principle, Hull function.}




\section{Introduction}
In the present paper, we study the accelerated Frenkel-Kontorova model (F-K) which describes a chain of particles interacting by an harmonic potential. Besides its
original aim of modeling crystal dislocations, the (F-K) model has many applications in physics such as the description of magnetic domain walls, atoms
adsorbed on a crystalline surface or superionic conductors (see for instance the book of Braun and Kivshar \cite{BK04} for an introduction to this model). The goal of this work is to prove the existence and the uniqueness of traveling wave as well as 
the uniqueness of the velocity. This work is a generalization of the one of Al Haj et al. \cite{AFM12} in which the authors study the fully overdamped case.

The study of traveling waves in reaction-diffusion equations has been introduced in pioneering works of Fisher \cite{r15} 
and Kolmogorov, Petrovsky and Piskunov \cite{KPP}.
Existence of traveling waves  solutions has been for instance obtained in \cite{AW,BNS,r26,K}.
More generally, there is a huge literature about existence, uniqueness and stability of traveling waves with various non linearities
with applications in particular in biology and combustion
and we refer for instance to the references cited in \cite{BH,r17}.
There are also several works on discrete or nonlocal versions of  reaction-diffusion equations
(see for instance \cite{BCC,r6,r13,C,CM-PS,r19,r23,r35,r38,r2,r3,WZ,r52,ZHH} and the references cited therein) and on damped hyperbolic equation (see \cite{DO,G1,G2,G3,H}) but, as far as we know, there is no result concerning hyperbolic discrete in space equations.
\subsection{{The Frenkel-Kontorova model }} 
The classical Frenkel-Kontorova (F-K) model describes the dynamics of crystal defects.
 If $ u_{i}(t) $ is the position of the particle $ i \in \mathbb{Z} $,
then the classical (F-K) models is given by the following dynamics   
$$ m_{0} \dfrac{d^{2}u_{i}}{dt^{2}}+ \dfrac{du_{i}}{dt}=u_{i+1}+u_{i-1}-2 u_{i}-\sin(2 \pi (u_{i}-L))-\sin(2\pi L) $$
where $  \dfrac{d^{2}u_{i}}{dt^{2}} $ denotes the acceleration of the $i$th particle, $ \dfrac{d u_{i} }{dt} $ is its  velocity, $m_{0}$
 denotes the mass of the particles, $-\sin(2\pi L)$ is a constant driving force which will cause the movement of the chain of atoms and $-\sin(2 \pi (u_{i}-L)) $  describes the force created by a 
periodic potential whose period is assumed to be $1$.  
We set
$$f_L(v)=-\sin(2 \pi (v-L))-\sin(2\pi L)$$
and  for all $ i \in \mathbb{Z} $ 
$$ \Xi_{i}(t)=u_{i}(t)+2 m_{0} \dfrac{du_{i}}{dt}(t). $$
We replace it in equation (\ref{eq 2}) in order to obtain the following monotone system: for $ i \in \mathbb{Z} $ and $ t \in (0,+\infty) $,
\begin{equation}\label{eq 1}
\left\lbrace
\begin{array}{lcl}
\dfrac{du_{i}}{dt}=\dfrac{1}{2m_{0}}(\Xi_{i}-u_{i}) \\
\dfrac{d\Xi_{i}}{dt}=2(u_{i+1}+u_{i-1}-2u_{i})+f_L(u_i)+\dfrac{1}{2m_{0}}(u_{i}-\Xi_{i}).
\end{array}
\right.
\end{equation} 
We look for particular traveling wave solution of (\ref{eq 1}), which have the form 
\begin{equation}\label{eq 2}
\left\lbrace
\begin{array}{lcl}
u_{i}(t)= \phi_{1}(i+c\,t) \\
\Xi_{i}(t)= \phi_{2}(i+c\,t). 
\end{array}
\right.
\end{equation}
If we replace (\ref{eq 2}) in (\ref{eq 1}), then the profile $ ( \phi_{1},\phi_{2}) $ should satisfy 
\begin{equation}\label{eq 3}
\left\lbrace
\begin{array}{lll}
 c \phi_{1}'(z) = \alpha_{0}(\phi_{2}(z)-\phi_{1}(z)) \\
 c \phi_{2}'(z)=2(\phi_{1}(z+1)+\phi_{1}(z-1)-2\phi_{1}(z))+2 f_L(\phi_{1}(z))+\alpha_{0}(\phi_{1}(z)-\phi_{2}(z)) 
\end{array}
\right.
\end{equation}
with $ z=i+c\,t$ and $\alpha_{0}=\dfrac{1}{2\,m_{0}}$. We then have the following result.
\begin{theo}[Existence and uniqueness of traveling wave solution for Frenkel-Kontorova model]\label{th:0} 
There exists a constant $\alpha^*$ (which will be made precised later on assumption (A)) such that for all $\alpha_0\ge \alpha^*$, there exist a unique real $ c $  and two functions $\phi_{1}:\mathbb{R} \to \mathbb{R}$ and
$\phi_{2}:\mathbb{R} \to \mathbb{R}$ that satisfy 
\begin{equation}
\left\lbrace
\begin{array}{l}
 c \phi_{1}'(z) = \alpha_{0}(\phi_{2}(z)-\phi_{1}(z)) \\
 c \phi_{2}'(z)=2(\phi_{1}(z+1)+\phi_{1}(z-1)-2\phi_{1}(z))+2f_L(\phi_{1}(z))+\alpha_{0}(\phi_{1}(z)-\phi_{2}(z)) 
\\
\phi_1,\; \phi_2 \textrm{ are non-decreasing over }\R\\
\phi_{1}(-\infty)=0, \,\,\phi_{1}(+\infty)=1 \\
\phi_{2}(-\infty)=0, \,\,\phi_{2}(+\infty)=1 \\
\end{array}
\right.
\end{equation}
in the classical sense if $ c \neq 0 $ and almost everywhere if $ c=0 $.
Moreover,  if $c\ne 0$, then the two profiles are unique up to translation.
\end{theo}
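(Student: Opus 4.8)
The plan is to regard \rf{eq 3} as a singular perturbation of the fully overdamped Frenkel--Kontorova model of \cite{AFM12} (which corresponds formally to $\alpha_0=+\infty$, i.e. zero mass) and to construct the wave by approximation, compactness and passage to the limit, the role of the threshold $\alpha^{*}$ being to ensure that the comparison principle and the strong maximum principles used below remain available in the accelerated setting. Throughout I would work with the monotone system \rf{eq 1}: its flow is order preserving and, since $f_L(0)=f_L(1)=0$, the constants $U\equiv(0,0)$ and $U\equiv(1,1)$ are stationary solutions. When $c\neq 0$, the first line of \rf{eq 3} gives $\phi_2=\phi_1+(c/\alpha_0)\phi_1'$, so the system is equivalent to the scalar non-local equation
\[
\frac{c^{2}}{\alpha_0}\,\phi_1''+2c\,\phi_1'=2\bigl(\phi_1(z+1)+\phi_1(z-1)-2\phi_1(z)\bigr)+2f_L(\phi_1(z)),
\]
which is genuinely second order in $z$ precisely because $c\neq 0$; the case $c=0$ reduces to $\phi_1=\phi_2$ together with a pure difference relation, which is why one can only expect an almost-everywhere solution in that regime.

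\emph{Existence.} For $\varepsilon>0$ I would regularize the stationary problem --- truncating $\R$ to a large interval with the equilibria as boundary data, and/or replacing $f_L$ by a suitable regularization $f_L^{\varepsilon}$ (as in \cite{AFM12}) --- so that on the approximate problem Perron's method applies: the constants $0$ and $1$ serve as global barriers, and suitably bent barriers provide moving sub- and super-solutions that pin a normalization such as $\phi_1^\varepsilon(0)=\tfrac12$ and, together with the monotone structure, single out a velocity $c_\varepsilon$. This gives, for each $\varepsilon$, a non-decreasing triple $(c_\varepsilon,\phi_1^\varepsilon,\phi_2^\varepsilon)$. One then needs uniform estimates: $|c_\varepsilon|\le C$, obtained by comparison with explicit exponential super-solutions near the equilibria together with the structure of $f_L$, and a uniform bound on the total variation of $\phi_k^\varepsilon$, after which Helly's selection theorem yields $\phi_k^\varepsilon\to\phi_k$ pointwise and $c_\varepsilon\to c$ along a subsequence. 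Stability of viscosity sub/super-solutions (half-relaxed limits) then identifies the limit as a solution of \rf{eq 3} --- classical if $c\neq0$, a.e. if $c=0$ --- while the normalization keeps the transition layer at finite distance and forces the correct limits $\phi_1(-\infty)=0$, $\phi_1(+\infty)=1$.

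\emph{Uniqueness.} The velocity is unique by comparison: if $(c_1,\phi^1)$ and $(c_2,\phi^2)$ solved the problem with $c_1<c_2$, then $(i,t)\mapsto\phi^1_k(i+c_1 t)$ and $\phi^2_k(i+c_2 t)$ are both solutions of \rf{eq 1}; sliding $\phi^2$ so that it dominates $\phi^1$ at $t=0$ and propagating the inequality via the comparison principle contradicts the fact that the faster front must eventually overtake the slower one, since both have the same limits at $\pm\infty$. When $c\neq0$, the profile is unique up to translation by the sliding method: given two profiles $\phi,\psi$, consider $\psi^{a}=\psi(\cdot+a)$, decrease $a$ from $+\infty$ to the first contact value $a^{*}$, and at a contact point invoke a strong maximum principle for the non-local second-order operator displayed above to conclude $\phi\equiv\psi^{a^{*}}$. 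The behaviour near $\pm\infty$, where strict monotonicity may degenerate, has to be treated by a separate Hopf-/linearization-type argument --- hence the need for several forms of the strong maximum principle, as announced in the abstract.

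\emph{Main obstacle.} The hard part should be the existence step, specifically the uniform bound on $c_\varepsilon$ and, above all, the \emph{non-degeneracy} of the limiting profile, i.e. that $\phi_1(-\infty)=0$ and $\phi_1(+\infty)=1$ rather than $\phi_1$ being constant or jumping to an intermediate zero of $f_L$; this is exactly where assumption (A) and the threshold $\alpha^{*}$ must be used, and it is also where the dichotomy $c\neq0$ versus $c=0$ (classical versus almost-everywhere solutions) requires care.
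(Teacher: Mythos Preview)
Your uniqueness argument is essentially the one the paper uses: velocity uniqueness by propagating a comparison through the evolution \rf{eq 1}, and profile uniqueness via the sliding method combined with strong maximum principles (the paper indeed proves a ``half'' strong maximum principle and then upgrades it using either strict monotonicity in a shifted variable or a sign condition on the shifts). So that part is fine in outline.

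The existence part, however, differs substantially from the paper and has a real gap. The paper does \emph{not} run a Perron/barrier argument on a truncated problem. Instead it imports, from prior homogenization work, the existence for each $p>0$ of a \emph{hull function} pair $(h_p,g_p)$ solving a $1$-periodic version of the system with a uniquely determined drift $\lambda_p$; setting $\phi_p^k(x)=h_p(px),\,g_p(px)$ and $c_p=\lambda_p/p$ gives a family of approximate traveling waves with period $1/p$, and the traveling wave is obtained by sending $p\to0$. The crucial point is that the velocity is handed to you by the hull-function theory --- there is no separate selection step.

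In your scheme the sentence ``suitably bent barriers\dots single out a velocity $c_\varepsilon$'' is exactly where the argument is missing. A normalization like $\phi_1^\varepsilon(0)=\tfrac12$ fixes translation but does not determine $c$: for generic $c$ there is no monotone heteroclinic, and Perron's method with the constant barriers $0$ and $1$ will happily return one of the constants rather than a transition layer. You would need an honest mechanism for producing $c_\varepsilon$ (a shooting argument, a degree/continuation argument, or a sub/super-solution pair that is only compatible for one value of $c$), and none is supplied. Relatedly, your bound $|c_\varepsilon|\le C$ ``by comparison with explicit exponential super-solutions near the equilibria'' presupposes you already have a well-defined $c_\varepsilon$ attached to a nontrivial profile; the paper instead bounds $c_p$ by a contradiction/rescaling argument that exploits the bistable sign of $f$ near $b$.

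Finally, a small correction on the role of $\alpha^{*}$: it is not merely a technical threshold for comparison, it is precisely what makes the map $X_0\mapsto 2F(X_0,\dots)+\alpha_0 X_0$ strictly increasing (assumption~(A), inequality~\rf{eq:monV0}). For the concrete $F(X_0,X_1,X_2)=X_1+X_2-2X_0+f_L(X_0)$ this reads $\alpha_0>4+4\pi$, and this monotonicity is used repeatedly --- in the plateau lemma for existence as well as in the strong maximum principles for uniqueness.
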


\subsection {Main results for the general case}
We now consider a generalization of system \eqref{eq 3}. Given a function 
$ F: [0,1]^{N+1} \to \mathbb{R}$, we consider the system
$$\begin{cases}
 c\,\,\phi_{1}'(z) = \alpha_{0}(\phi_{2}(z)-\phi_{1}(z)) \\
 c\,\,\phi_{2}'(z)=2\,F((\phi_{1}(z+r_{i}))_{i=0,...,N})+\alpha_{0}(\phi_{1}(z)-\phi_{2}(z)).
\end{cases}$$
In order to provide our results, we introduce some assumptions on $ F $. 

\paragraph{Assumption (A)}
\begin{itemize}
\item \textbf{Regularity of $ F $: }  
 $ F $ is globally Lipschitz continuous over $[0,1]$;

\item \textbf{Monotonicity of $ F $:} 
$ F(X_{0},...,X_{N}) $ is non-decreasing in $X_{i} $ for $i \neq 0$, and 
\begin{equation}\label{eq:monV0}
 2 \dfrac{\partial F}{\partial X_{0}}+ \alpha_0> 0.
 \end{equation}
 \end{itemize}
We set $ F(v,....,v)=f(v)$.

\paragraph{Assumption (B)} 
\begin{itemize}
\item\textbf{Instability:} 
$f(0)=0=f(1)$ and there exists $b \in [0,1]$ such that 
$$f(b)=0,\quad f_{\mid (0,b)} < 0,\quad f_{\mid (b,1)} > 0 \quad \mbox{and} \quad f'(b) >0.$$
\item\textbf{Smoothness:} 
$ F $ is $ C^{1}$ in a neighborhood of $\{b\}^{N+1}$.
\end{itemize}\medskip

We give the first main result concerning the existence of traveling wave.
\begin{theo}[Existence of a traveling wave]\label{Th 1} 
Under assumptions $(A)$ and $(B)$, there exist a real $ c $  and two functions  $\phi_{1}:\,\,\mathbb{R} \to \mathbb{R}$ and
$\phi_{2}:\,\,\mathbb{R} \to \mathbb{R}$ that solves 
\begin{equation}\label{eq 4}
\left\lbrace
\begin{array}{lcl}
 c\,\,\phi_{1}'(z) = \alpha_{0}(\phi_{2}(z)-\phi_{1}(z)) \\
c\,\,\phi_{2}'(z)=2\,F((\phi_{1}(z+r_{i}))_{i=0,...,N})+\alpha_{0}(\phi_{1}(z)-\phi_{2}(z)) \\
\phi_1,\; \phi_2 \textrm{ are non-decreasing over }\R\\
\phi_{1}(-\infty)=0, \,\,\phi_{1}(+\infty)=1 \\
\phi_{2}(-\infty)=0, \,\,\phi_{2}(+\infty)=1 \\
 \end{array}
\right.
\end{equation}
in the classical sense if $ c \neq 0 $ and almost everywhere if $ c=0 $.
\end{theo}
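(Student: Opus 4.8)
The idea is to obtain the traveling wave as a limit of solutions of a non-degenerate, spatially truncated version of \eqref{eq 4}, adapting to the accelerated setting the strategy used in \cite{AFM12} for the overdamped model, and then to identify the limits of the profiles at $\pm\infty$. A preliminary remark guides the construction: eliminating $\phi_{2}=\phi_{1}+\tfrac{c}{\alpha_{0}}\phi_{1}'$ from the first equation and inserting it in the second, system \eqref{eq 4} is formally equivalent to the single nonlocal equation
\begin{equation}\label{eq:TW-reduced}
\frac{c^{2}}{\alpha_{0}}\,\phi_{1}''+2c\,\phi_{1}'=2\,F\big((\phi_{1}(z+r_{i}))_{i=0,\dots,N}\big),\qquad \phi_{1}\ \text{non-decreasing},\ \ \phi_{1}(-\infty)=0,\ \ \phi_{1}(+\infty)=1 .
\end{equation}
For $c\neq0$ this equation carries a genuine second-order term, hence a smooth profile, while for $c=0$ it degenerates into the pointwise constraint $F((\phi_{1}(z+r_{i})))=0$, which is precisely why only almost everywhere solutions can be expected in that case. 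Since the sign of the unknown velocity is not available a priori, I would keep the pair $(\phi_{1},\phi_{2})$ and regularize: for $\varepsilon>0$ and $a>0$, solve \eqref{eq 4} on $[-a,a]$ with $\varepsilon\phi_{1}''$, $\varepsilon\phi_{2}''$ added to the two right-hand sides, with boundary data $(\phi_{1},\phi_{2})(-a)=(0,0)$, $(\phi_{1},\phi_{2})(a)=(1,1)$, and with the normalization $\phi_{1}(0)=b$ fixing the translation invariance.

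The key structural fact for solving this truncated problem is that the change of unknowns $\Xi_{i}=u_{i}+2m_{0}\dot u_{i}$ was designed exactly so that the dynamics \eqref{eq 1} is a monotone (cooperative) system; consequently the truncated profile system obeys a comparison principle, the two inputs being the monotonicity of $F$ in $X_{i}$ for $i\neq0$ and the sign condition \eqref{eq:monV0}. Freezing $c$, the linear-boundary-value problem is then solved by ordered sub/super-solutions (the constants $0$ and $1$) or by Schauder's theorem, producing $(\phi_{1}^{c},\phi_{2}^{c})$; a continuity (degree) argument in $c$ next furnishes a velocity $c^{\varepsilon,a}$ realizing the normalization. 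Comparing a solution with its translates gives monotonicity in $z$ of $\phi_{1}^{\varepsilon,a},\phi_{2}^{\varepsilon,a}$; the global Lipschitz bound on $F$ and $0\le\phi_{j}\le1$ yield a bound on $|c^{\varepsilon,a}|$, and then, through the equations, uniform local $W^{2,p}$ (hence $C^{1,\beta}_{loc}$) estimates, all independent of $a$ and $\varepsilon$.

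Passing to the limit $a\to+\infty$ and then $\varepsilon\to0$ along subsequences — using Helly's selection theorem for the monotone profiles and Arzel\`a--Ascoli on compact sets — gives $c^{\varepsilon,a}\to c$ and $\phi_{j}^{\varepsilon,a}\to\phi_{j}$ locally uniformly, with $\phi_{j}$ non-decreasing, $0\le\phi_{j}\le1$, $\phi_{1}(0)=b$, and $(\phi_{1},\phi_{2})$ solving \eqref{eq 4} classically if $c\neq0$ and a.e.\ (equivalently, in the viscosity sense) if $c=0$. By monotonicity the one-sided limits $\ell_{\pm}^{j}:=\phi_{j}(\pm\infty)$ exist; sending $z\to\pm\infty$ in the equations forces $\ell_{\pm}^{1}=\ell_{\pm}^{2}=:\ell_{\pm}$ and $f(\ell_{\pm})=0$, so by assumption (B) together with $\phi_{1}(0)=b$ one gets $\ell_{-}\in\{0,b\}$ and $\ell_{+}\in\{b,1\}$.

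The remaining step, which I expect to be the main obstacle, is to rule out $\ell_{-}=b$ and $\ell_{+}=b$, i.e.\ to show that the profile genuinely detaches from the middle plateau. Here I would use the $C^{1}$ smoothness of $F$ near $\{b\}^{N+1}$ and the instability $f'(b)>0$: linearizing \eqref{eq:TW-reduced} at the constant state $b$ produces a linear nonlocal operator with positive principal eigenvalue, so $b$ is linearly unstable; from the associated positive eigenfunction one builds, near $b$, a strict sub-solution staying above $b$ and a strict super-solution staying below $b$, and comparison with $\phi_{1}$ shows that a monotone solution cannot converge to $b$ at either end unless it is identically $b$ — which is excluded because $\phi_{1}(0)=b$ forbids $\phi_{1}\equiv 0$ or $1$, and a quantitative version of the escape estimate forbids $\phi_{1}\equiv b$; the latter estimate, controlling the location of the transition, also prevents the limiting profile from collapsing to a constant. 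This yields $\ell_{-}=0$ and $\ell_{+}=1$, completing the proof.
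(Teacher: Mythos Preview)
Your route is \emph{genuinely different} from the paper's. The paper does not regularize by adding $\varepsilon\phi_j''$ on a bounded interval; instead it invokes the hull functions of \cite{FIM2}: for each $p>0$ there is a pair $(h_p,g_p)$ solving the system with periodicity $h_p(x+1)=h_p(x)+1$, $g_p(x+1)=g_p(x)+1$, and a unique $\lambda_p$. Setting $\phi_p^j(x)=h_p(px),g_p(px)$ and $c_p=\lambda_p/p$ gives profiles with period $1/p$ and oscillation exactly $1$, and the traveling wave is obtained as $p\to0$. This built-in oscillation constraint is what replaces your degree/normalization scheme and your fight against degeneracy.

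Two places in your sketch are not yet proofs.

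\textbf{(i) Velocity bound.} The claim that ``the global Lipschitz bound on $F$ and $0\le\phi_j\le1$ yield a bound on $|c^{\varepsilon,a}|$'' is not justified: for nonlocal fronts, Lipschitz control of $F$ alone does not bound the speed. In the paper the bound (Lemma \ref{lem 8}) uses the normalization together with the sign of $f$ near $b$: after rescaling, the limiting equation forces $f(\bar\phi^1)\ge0$ (or $\le0$), which contradicts $f(b-\varepsilon)<0$. Your argument needs an analogous mechanism, and it must also survive the $\varepsilon$-regularization.

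\textbf{(ii) Ruling out $\ell_\pm=b$ and collapse to a constant.} You correctly flag this as the crux, but ``a quantitative version of the escape estimate'' is not an argument. In the paper this step (Steps~3--5 of the proof of Proposition \ref{pro 1}) is the longest and most delicate: one introduces nearby level points $y_p,z_p$ at heights $b\pm\varepsilon$, shows via a plateau lemma (Proposition \ref{pro 3}) that the shifted profile $\psi_p^j(x)=(\phi_p^j)_*(x+a)-(\phi_p^j)^*(x-a)$ is bounded below by $\delta(\varepsilon)>0$ at $y_p,z_p$, localizes the interior minimum, and finally uses the $C^1$-smoothness of $F$ near $\{b\}^{N+1}$ together with $f'(b)>0$ to derive $0\ge f'(b)>0$. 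Your linearization-eigenfunction idea is in the right spirit, but you would still have to produce a \emph{uniform} (in $a,\varepsilon$) lower bound on the transition width to prevent the limit from collapsing to the constant $b$; without the periodicity constraint of the hull-function approach, that uniformity is precisely the missing ingredient.

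Smaller issues: on the bounded interval the nonlocal term $F((\phi_1(z+r_i)))$ samples outside $[-a,a]$, so you must prescribe an extension and check it is compatible with comparison; and ``comparing a solution with its translates'' to get monotonicity does not work directly on $[-a,a]$ because translates do not satisfy the boundary data. Both can be fixed (e.g.\ sliding with the boundary extension), but they are not automatic.
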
  
In order to prove the uniqueness of the traveling  waves, we require some additional assumptions:

\paragraph{Assumption (C): Inverse monotonicity close to $ \{ 0 \}^{N+1} $ and  $ E= \{1 \}^{N+1} $:}
There exists $ \beta_{0}>0 $ such that for $ a>0$, we have 
$$ \begin{cases}
F(X+(a,...,a)) < F(X)\,\,\mbox{ for\,\,all}\,\,X,X+(a,...,a)\,\in\,[0,\beta_{0}]^{N+1}\\
F(X+(a,...,a)) < F(X)\,\,\mbox{ for \,\,all}\,\,X,X+(a,...,a)\,\in\,[1-\beta_{0},1]^{N+1}\\    
\end{cases}$$
\paragraph{Assumption ($ D+$)}
\begin{itemize}
\item[i)] All the $r_{i}$ have the same sign: We assume that $r_{i} \le 0$, for\,\,all$\,\,i\,\in\,\,\{0,...,N\}$.
 \item[ii)] Strict monotonicity: F is increasing in $ X_{i^{+}} $ with $ r_{i^{+}} > 0 $.
\end{itemize}
\paragraph{Assumption ($ D-$)}
 \begin{itemize}
\item[i)] All the $r_{i}$ have the same sign: We assume that $r_{i} \ge 0$, for\,\,all$\,\,i\,\in\,\,\{0,...,N\}$.
\item[ii)]Strict monotonicity: F is increasing in $ X_{i^{-}} $ with $ r_{i^{-}} < 0 $.
 \end{itemize}\bigskip

We give the second main results concerning the uniqueness of the velocity and of the profile.
\begin{theo}[Uniqueness of the velocity and of the profile]\label{th 2bis} 
 We assume $(A)$ and let $(c, (\phi_{1}, \phi_{2})) $, with $\phi_1,\phi_2:\R\to [0,1]$, be a solution of  
\begin{equation}\label{eq 5}
\left\lbrace
\begin{array}{lcl}
 c\,\,\phi_{1}'(z) = \alpha_{0}(\phi_{2}(z)-\phi_{1}(z)) \\
c\,\,\phi_{2}'(z)=2\,F((\phi_{1}(z+r_{i}))_{i=0,...,N})+\alpha_{0}(\phi_{1}(z)-\phi_{2}(z)) \\
\phi_{1}(-\infty)=0, \,\,\phi_{1}(+\infty)=1 \\
\phi_{2}(-\infty)=0, \,\,\phi_{2}(+\infty)=1. \\
\end{array}
\right.
\end{equation}
Then, we have the following properties. 
\begin{itemize}
\item[(a)] {\em Uniqueness of the velocity:}
Under the additional assumption $(C)$, the velocity $ c $ is unique. 
\item[(b)] {\em Uniqueness of $(\phi_{1}, \phi_{2})$: }
If $ c \neq 0 $, then under the additional assumption $ (C) $ and $ (D+)\,i) $ or $ ii)$ if $c>0$ (resp. $ (D-)\,i) $ or $ ii)$ if $c<0$), $ (\phi_{1}, \phi_{2})$ is unique (up to translation) and $\phi_1$ and $\phi_2$ are increasing.
\end{itemize}
\end{theo}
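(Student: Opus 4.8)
The plan is to reduce both statements to comparison arguments for the system \eqref{eq 5}, treated as a monotone system in the variables $(\phi_1,\phi_2)$ thanks to the structural conditions in $(A)$ (monotonicity of $F$ in $X_i$ for $i\ne0$ and the cross-coupling sign through $\alpha_0$, together with $2\partial F/\partial X_0+\alpha_0>0$). The first step is to record a comparison principle: if $(c,(\phi_1,\phi_2))$ and $(c,(\psi_1,\psi_2))$ solve \eqref{eq 5} with the \emph{same} velocity $c$ and $\phi_i\le\psi_i$ on a suitable initial slab (or $\phi_i\le\psi_i+\kappa$ for some constant $\kappa\ge0$), then $\phi_i\le\psi_i$ (resp.\ $\phi_i\le\psi_i+\kappa$) everywhere; the proof is the standard sliding/minimality argument, looking at the infimum of admissible shifts $\tau$ such that $\phi_i(\cdot+\tau)\ge\psi_i$, and using the behavior at $\pm\infty$ ($\phi_i(-\infty)=0$, $\phi_i(+\infty)=1$) to localize a contact point, then deriving a contradiction at that point from the ODE/difference system. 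Assumption $(C)$ is exactly what is needed to run this sliding argument in the two boundary layers $[0,\beta_0]^{N+1}$ and $[1-\beta_0,1]^{N+1}$, where monotone comparison alone is not enough because $F$ is not monotone in $X_0$: the strict inverse monotonicity $F(X+(a,\dots,a))<F(X)$ gives the strict inequality that closes the argument near the endpoints.

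For part (a), uniqueness of the velocity, I would argue by contradiction: suppose $(c_1,(\phi_1,\phi_2))$ and $(c_2,(\psi_1,\psi_2))$ are two solutions with $c_1<c_2$. Translating $(\psi_1,\psi_2)$ far to the left so that it lies below $(\phi_1,\phi_2)$ initially, one compares the two along the evolution: because the profiles travel at different speeds, after translating one can arrange an ordering at one ``time'' that must be violated later, and the comparison principle above (applied to the associated evolution system, of which the traveling waves are stationary-in-moving-frame solutions) forces a contradiction. The key technical input is again $(C)$, which rules out the degenerate stationary behavior near $0$ and $1$; without it the two fronts could in principle coexist. This is the classical scheme (cf.\ Al Haj et al.\ \cite{AFM12} in the overdamped case), adapted to the $2\times2$ first-order system; the adaptation is mostly bookkeeping once the comparison principle is in hand.

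For part (b), uniqueness of the profile when $c\ne0$, I treat $c>0$ (the case $c<0$ being symmetric under $z\mapsto-z$, $r_i\mapsto-r_i$, which swaps $(D+)$ and $(D-)$). Given two solutions with the same velocity $c$, one slides one profile over the other: let $\tau^*=\inf\{\tau:\ \phi_i(\cdot+\tau)\ge\psi_i \text{ on }\R,\ i=1,2\}$, which is finite and well-defined by the limits at $\pm\infty$. If $\tau^*$ is not a translation identifying the two solutions, then $\phi_i(\cdot+\tau^*)\ge\psi_i$ with equality somewhere (or asymptotically), and one derives a contradiction via a strong maximum principle. This is where $(D+)$ enters and where the two alternatives $i)$ and $ii)$ give two different flavors of strong maximum principle: under $(D+)\,i)$ all shifts $r_i\le0$, so the difference $w=\phi_1(\cdot+\tau^*)-\psi_1\ge0$ satisfies a differential inequality whose nonlocal part has a definite sign, and a touching point propagates backward along $z\mapsto z+r_i$; under $(D+)\,ii)$ strict monotonicity in the coordinate $X_{i^+}$ with $r_{i^+}>0$ gives strict positivity of the coupling in that direction. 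In both cases one concludes $w\equiv0$, hence $\psi_i=\phi_i(\cdot+\tau^*)$; strict monotonicity of $\phi_1,\phi_2$ then follows from the same maximum principle applied to a derivative (or to a shift $\phi_i(\cdot+h)-\phi_i$ with $h>0$), since a flat piece would again contradict the strong maximum principle together with $(C)$ at the endpoints. The main obstacle I anticipate is precisely the strong maximum principle for the coupled first-order/nonlocal system: unlike a scalar parabolic or elliptic equation, here one must propagate the contact information both \emph{between} the two components $\phi_1,\phi_2$ (through the $\alpha_0$ coupling, using $2\partial F/\partial X_0+\alpha_0>0$) and \emph{along} the discrete shifts $r_i$, and one must do so in the regularity class allowed ($\phi_i$ merely Lipschitz when $c=0$, but here $c\ne0$ so $\phi_i\in C^1$, which helps); handling the endpoints, where $F$ loses monotonicity in $X_0$ and one must fall back on $(C)$, is the delicate point that forces the separate treatment of the boundary layers.
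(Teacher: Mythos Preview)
Your proposal is correct and follows essentially the same architecture as the paper: uniqueness of the velocity via the half-line comparison principles (driven by assumption $(C)$ in the boundary layers) combined with the evolution comparison, and uniqueness of the profile via the sliding method together with a strong maximum principle that comes in two flavors according to $(D\pm)\,i)$ or $ii)$, with strict monotonicity obtained by applying the same strong maximum principle to shifts $\phi_i(\cdot+h)-\phi_i$. The one point where your sketch diverges slightly is the mechanism under $(D+)\,i)$: rather than propagating the contact ``backward along $z\mapsto z+r_i$'', the paper first uses the half strong maximum principle (contact at $0$ forces equality on $(-\infty,0]$ when $c>0$), and then, exploiting $r_i\le0$ and the extra regularity $\phi_1\in C^2$ coming from $c\ne0$, rewrites the system as a second-order ODE for $w_1$ and runs a Cauchy--Lipschitz/comparison argument \emph{forward} from the initial slab $[-r^*,0]$ to extend equality to $[0,\infty)$.
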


\begin{rem}
We note that $F(X_0,X_1,X_2)=X_1+X_2-2X_0-\sin(2\pi(X_0-L))-\sin(2\pi L)$ satisfies assumptions (A), (B), (C) , (D+)ii) and (D-)ii). Then Theorem \ref{th:0} is a direct application of Theorems \ref{Th 1} and \ref{th 2bis}.
\end{rem}
\bigskip

For this paper, we define
\begin{equation}\label{eq 6}
r^{*}=\max_{i=0...N}{ \left| r_{i}  \right|} 
\end{equation}
and we assume that $ r^{*}>0 $ (otherwise, the system reduce to a single ODE).
\subsection{Organization of the paper}
In Section \ref{sec:2}, we give the definition of viscosity solution and of Hull function. We also recall some basic results about monotone functions. Section \ref{sec:3} is devoted to the proof of existence of traveling waves, namely Theorem \ref{Th 1}.
In Section \ref{sec:4}, we study the question of uniqueness of the velocity by proving a comparison principal on the half line. Finally, in Section \ref{sec:5}, we prove the uniqueness of the profile using different types of strong maximum principles.

\section{Preliminary results}\label{sec:2}
 This section is divided into four subsections. The first one is devoted to the extension of the function $F$ onto $ \mathbb{R}^{N+1} $. In the second subsection,
we give the definition of viscosity solution while the notion of hull functions is recalled in the third one. Finally, we present some results about monotone functions in the last subsection.
\subsection{Extension of F} 
To construct the traveling waves, we will use the hull functions constructed in \cite{FIM2}. To do that, as in \cite{AFM12}, we will need  to extend the function $ F $ by
$\tilde{F}$ which is defined over $ \mathbb{R}^{N+1} $ and satisfied the following assumption:
$\\$
\textbf{ Assumption ($\tilde{A}$):} $ \\ $
 a) \underline{Regularity:}
$\tilde{F} \textrm{ is Lipschitz continuous over } \mathbb{R}^{N+1} .$\newline
b) \underline{Periodicity:} $\tilde{F}(X_{0}+1,...,X_{N}+1)=\tilde{F}(X_{0},...,X_{N})$ for every $X=(X_{0},...,X_{N}) \in \mathbb{R}^{N+1}$.\newline
c) \underline{ Monotonicity:} 
 $ \tilde{F}(X_{0},...,X_{N})$ is non-decreasing in $ V_{i} $ for $ i \neq 0 $ and
\begin{equation}\label{eq:monV0bis}
2\,\dfrac{\partial \tilde{F}}{\partial X_{0}}+ \alpha_0 > 0.
\end{equation}
Then, we have the following extension of the function $F$.
 \begin{lem}\label{lem 1} 
Given a function  $F$ defined over  $ Q= [0,1]^{N+1}$ satisfying $(A)$ and $ F(1,...,1)=F(0,...,0)$, there exists an extension
$ \tilde{F}$ defined over 
$\mathbb{R}^{N+1}$ such that
$$ \tilde{F}_{\mid_{Q}}=F \,\,\mbox{and}\,\, \tilde{F}\,\,\mbox{satisfies}\,\, (\tilde{A}). $$ 
\end{lem}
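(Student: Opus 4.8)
The plan is to extend $F$ from the cube $Q=[0,1]^{N+1}$ to all of $\mathbb{R}^{N+1}$ by first extending in the distinguished variable $X_0$ alone (where we must preserve the strict condition $2\,\partial F/\partial X_0+\alpha_0>0$), then extending in the remaining variables $X_1,\dots,X_N$ (where we only need monotonicity), and finally periodizing by translating by integer vectors $(k,\dots,k)$. I would carry this out in the following order.

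First I would regularize: since $F$ is only globally Lipschitz, I would note that the strict inequality $2\,\partial F/\partial X_0+\alpha_0>0$ must be interpreted in the appropriate a.e./monotone sense, namely that $X_0\mapsto F(X_0,X_1,\dots,X_N)+\frac{\alpha_0}{2}X_0$ is nondecreasing (in fact strictly increasing, uniformly); this monotone reformulation is what I will actually preserve. Next, the extension in $X_0$: for fixed $(X_1,\dots,X_N)\in[0,1]^N$, I would set $\tilde F(X_0,X_1,\dots,X_N)=F(0,X_1,\dots,X_N)-\frac{\alpha_0}{2}X_0$ for $X_0<0$ and $=F(1,X_1,\dots,X_N)-\frac{\alpha_0}{2}(X_0-1)$ for $X_0>1$. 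This is continuous across $X_0=0$ and $X_0=1$, it is globally Lipschitz (the new pieces have the constant slope $-\alpha_0/2$ in $X_0$ and inherit the Lipschitz constant of $F$ in the other variables), it is still nondecreasing in $X_i$ for $i\ne0$ on this strip, and by construction $X_0\mapsto \tilde F+\frac{\alpha_0}{2}X_0$ is constant (hence nondecreasing) outside $[0,1]$ and strictly increasing inside, so $2\,\partial\tilde F/\partial X_0+\alpha_0\ge0$ everywhere and $>0$ where it matters. Then I would extend in $X_1,\dots,X_N$ one at a time by constant continuation: $\tilde F(\dots,X_i,\dots)=\tilde F(\dots,0,\dots)$ for $X_i<0$ and $=\tilde F(\dots,1,\dots)$ for $X_i>1$; this preserves Lipschitz continuity, keeps nondecreasingness in $X_i$, and does not affect the $X_0$-monotonicity since we are replacing $X_i$ by a constant in each slab. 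At this stage we have a Lipschitz extension to $\mathbb{R}^{N+1}$ satisfying (a) and (c) of $(\tilde A)$, but not yet periodicity.

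Finally I would periodize. The hypothesis $F(1,\dots,1)=F(0,\dots,0)$ is used precisely here: define $G$ on the fundamental "diagonal slab" $S=\{X:0\le X_0-\min_i$ is replaced by the cleaner device of decomposing $X=(X_0,\dots,X_N)$ as $X = q\cdot(1,\dots,1) + Y$ where $q=\lfloor X_0\rfloor$ and $Y=X-q(1,\dots,1)$ has $Y_0\in[0,1)$, and setting $\tilde F(X):=\tilde F_{\mathrm{pre}}(Y)$ with $\tilde F_{\mathrm{pre}}$ the extension built above; periodicity (b) then holds by definition. Continuity and Lipschitz-ness across the gluing hyperplanes $\{X_0\in\mathbb{Z}\}$ follow from the matching condition $\tilde F_{\mathrm{pre}}(1,\dots,1)=F(1,\dots,1)=F(0,\dots,0)=\tilde F_{\mathrm{pre}}(0,\dots,0)$ together with the fact that the extension in $X_0$ was affine with slope $-\alpha_0/2$ near those hyperplanes, so the one-sided derivatives match; monotonicity in $X_i$ ($i\ne0$) is translation-invariant, and $2\,\partial\tilde F/\partial X_0+\alpha_0>0$ is preserved because translating by $(k,\dots,k)$ shifts $X_0$ by $k$ and the condition is itself translation-invariant.

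The main obstacle I expect is the gluing in the periodization step: one must check that continuation by integer diagonal translates actually produces a globally Lipschitz (not just locally Lipschitz, and not merely continuous) function across all the hyperplanes $\{X_0=k\}$, and that the strict monotonicity in $X_0$ survives the gluing — this is exactly why the extension outside $[0,1]$ in the $X_0$-direction was chosen to be affine with the specific slope $-\alpha_0/2$, so that $X_0\mapsto\tilde F+\frac{\alpha_0}{2}X_0$ is globally nondecreasing with a uniform positive jump rate on each unit interval, and so that the Lipschitz constant does not blow up. Everything else is routine bookkeeping; this compatibility at the seams is the only place where the hypotheses $F(1,\dots,1)=F(0,\dots,0)$ and the quantitative bound \eqref{eq:monV0} are genuinely used.
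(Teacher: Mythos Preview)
Your periodization step has a genuine gap. You define $\tilde F(X) = \tilde F_{\mathrm{pre}}(X - \lfloor X_0\rfloor(1,\dots,1))$ and claim continuity across the hyperplanes $\{X_0 \in \mathbb{Z}\}$ follows from the single matching condition $\tilde F_{\mathrm{pre}}(1,\dots,1) = \tilde F_{\mathrm{pre}}(0,\dots,0)$. But continuity at a point $(1,X_1,\dots,X_N)$ with $X_i \in [0,1]$ requires
\[
\tilde F_{\mathrm{pre}}(1,X_1,\dots,X_N) \;=\; \tilde F_{\mathrm{pre}}(0,X_1-1,\dots,X_N-1).
\]
The left side equals $F(1,X_1,\dots,X_N)$; the right side, after your constant continuation in the variables $X_i$ for $i\geq 1$ (since each $X_i - 1 \leq 0$ is clipped to $0$), equals $F(0,0,\dots,0)$. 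These agree only if $F(1,X_1,\dots,X_N) = F(0,\dots,0)$ for \emph{all} $(X_1,\dots,X_N) \in [0,1]^N$, which is not among the hypotheses. The assumption $F(1,\dots,1)=F(0,\dots,0)$ gives matching at one point of the gluing hyperplane, not along the whole hyperplane. The same mismatch breaks the monotonicity in $X_i$, $i\neq 0$, across the seam: the periodized function jumps from $F(1,X_1,\dots,X_N)$ to the constant value $F(0,\dots,0)$ as $X_0$ crosses $1$, which will typically go the wrong way.

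The paper itself does not give an explicit construction; it simply invokes the extension built in \cite[Lemma~2.1]{AFM12} and remarks that the extra condition $2\,\partial\tilde F/\partial X_0+\alpha_0>0$ is immediate from that construction. To make your approach work you would need either to consult that construction, or to build $\tilde F_{\mathrm{pre}}$ on the slab $\{X_0\in[0,1]\}$ so that the full compatibility $\tilde F_{\mathrm{pre}}(1,Y_1,\dots,Y_N)=\tilde F_{\mathrm{pre}}(0,Y_1-1,\dots,Y_N-1)$ holds for every $(Y_1,\dots,Y_N)\in\mathbb{R}^N$ before periodizing; constant continuation in the $X_i$ directions is too crude for this.
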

\begin{proof}
The construction is made in \cite[Lemma 2.1]{AFM12}. The only thing to verify is that $\tilde F$ satisfy \eqref{eq:monV0bis} if $F$ satisfy \eqref{eq:monV0}, but this is trivial by looking to the way the function $\tilde F$ is constructed.
\end{proof}
\begin{rem}\label{rem 1} $\\$
We remark that, if $(\phi_{1}, \phi_{2})$ is a traveling wave for equation \eqref{eq 4} with $F$ replaced by $ \tilde{F} $, then $(\phi_{1}, \phi_{2})$ is also a traveling wave
 of the same equation. This is a direct consequence of Lemma \ref{lem 1} and the fact that
$$\begin{cases}
(\phi_{1}, \phi_{2}) \mbox{\,\,is non-decreasing over}\,\,\,\mathbb{R} \\
\phi_{1}(-\infty)=0, \,\,\phi_{1}(+\infty)=1 \\
\phi_{2}(-\infty)=0, \,\,\phi_{2}(+\infty)=1.\\
\end{cases}$$ 
\end{rem}
\noindent Theorem \ref{Th 1} is then a direct application of the following result.
\begin{pro}[Existence of traveling waves]\label{pro 1}
We assume that  $ \tilde F $ satisfies  $ (\tilde{A})$ and $(B)$. Then there exist a real $ c $  and two 
functions $\phi_{1},\phi_{2}$ solutions of 
\begin{equation}\label{eq 7}
\left\lbrace
\begin{array}{lcl}
 c\,\,\phi_{1}'(z) = \alpha_{0}(\phi_{2}(z)-\phi_{1}(z)) \\
c\,\,\phi_{2}'(z)=2\,\tilde F((\phi_{1}(z+r_{i}))_{i=0,...,N})+\alpha_{0}(\phi_{1}(z)-\phi_{2}(z)) \\
\phi_1,\; \phi_2 \textrm{ are non-decreasing over }\R\\
\phi_{1}(-\infty)=0, \phi_{1}(+\infty)=1 \\
\phi_{2}(-\infty)=0, \phi_{2}(+\infty)=1 \\
\end{array}
\right.
\end{equation}
in the classical sense if $ c \neq 0$ and almost everywhere if $ c=0$.
\end{pro}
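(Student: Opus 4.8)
The plan is to obtain the profiles $(\phi_1,\phi_2)$ as a limit of hull functions, adapting to the second–order structure here the strategy of \cite{AFM12}. As a preliminary observation, eliminating $\phi_2=\phi_1+\frac{c}{\alpha_0}\phi_1'$ from the first line of \eqref{eq 7} shows that the system is equivalent to the single nonlocal equation $\frac{c^2}{\alpha_0}\phi_1''+2c\,\phi_1'=2\tilde F((\phi_1(z+r_i))_i)$; this makes transparent why the two regimes $c\neq0$ (a uniformly elliptic second–order equation, once $c$ is bounded away from $0$) and $c=0$ (the degenerate constraint $\tilde F((\phi_1(z+r_i))_i)=0$ a.e.) have to be treated separately.

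First I would invoke the hull function construction of \cite{FIM2} for the $1$-periodic, monotone system governed by $\tilde F$. Using $(\tilde A)$ together with the instability and local $C^1$ smoothness of $(B)$ at the stationary value $b$ (which allow one to bifurcate from the homogeneous state $u_i\equiv b$), one gets, for every small parameter $\varepsilon>0$, a velocity $c_\varepsilon$ and non-decreasing functions $h_1^\varepsilon,h_2^\varepsilon:\R\to\R$ with $h_j^\varepsilon(\xi+1/\varepsilon)=h_j^\varepsilon(\xi)+1$ that solve the corresponding cell problem, normalized by a translation so that $h_1^\varepsilon(0)=b$. The next step is to collect uniform bounds: monotonicity of the $h_j^\varepsilon$ is built in; on bounded sets the $h_j^\varepsilon$ are bounded uniformly in $\varepsilon$; and, on any region where $c_\varepsilon$ stays away from $0$, the cell problem yields a local Lipschitz (indeed $C^\infty$) bound, by reading off $(h_2^\varepsilon)'$ from the second equation and then $(h_1^\varepsilon)'$ from the first. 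One also needs $|c_\varepsilon|\le C$ uniformly, which I would obtain by comparison with explicit super- and sub-solutions built from the behaviour of $f$ near $0$, $b$ and $1$.

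Passing to the limit $\varepsilon\to0$ along a subsequence, Helly's selection theorem (with a diagonal extraction) gives $c_\varepsilon\to c$ and $h_j^\varepsilon\to\phi_j$ pointwise, locally uniformly where the Lipschitz bound is available, with $\phi_j$ non-decreasing; as $1/\varepsilon\to\infty$ the periodicity relation is lost in the limit and $(\phi_1,\phi_2)$ solves the first two equations of \eqref{eq 7}, classically after a bootstrap if $c\neq0$ and a.e. if $c=0$. It then remains to identify the limits at $\pm\infty$. One first checks that $\phi_1$ is non-constant — each $h_1^\varepsilon$ crosses every level of $(b,b+1)$, and a uniform control on the location of, say, the level set $\{\phi_1=b+\frac12\}$ (from the Lipschitz bound when $c_\varepsilon\not\to0$, and by a separate argument otherwise) prevents the transition from escaping to infinity. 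Next, since $\phi_j$ is monotone, bounded and solves the equation, each one-sided limit must be a zero of the $1$-periodic function $f$; the instability $f'(b)>0$ rules out the limit $b$, and, the zeros of $f$ being $\Z\cup(b+\Z)$ with $b$ lying strictly between $0$ and $1$, monotonicity forces, after shifting by an integer, $\phi_j(-\infty)=0$ and $\phi_j(+\infty)=1$.

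The hard part is this last step: turning the qualitative information ``$h_j^\varepsilon$ is a hull function'' into the quantitative statements that the limiting transition neither collapses to a constant nor runs off to infinity, and that it is a full front $0\to1$ rather than a heteroclinic to the unstable value $b$ or a partial front connecting $0$ to $b$ or $b$ to $1$. This is compounded by the fact that all the second–order estimates degenerate as $c_\varepsilon\to0$, so that the stationary case $c=0$ — where one only has the a.e. formulation and no ellipticity — must be handled separately.
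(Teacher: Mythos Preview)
Your overall strategy --- hull functions from \cite{FIM2}, normalise so the profile crosses $b$ at the origin, bound the velocity uniformly, pass to the limit via Helly, and identify the end-states as zeros of $f$ --- matches the paper's. But there is a genuine gap in the final step.

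You write that ``the instability $f'(b)>0$ rules out the limit $b$'', and elsewhere that a ``uniform control on the location of the level set $\{\phi_1=b+\tfrac12\}$'' prevents the transition from escaping. Neither is justified. The Lipschitz bound $|(\phi_p^1)'|\le M/|c_p|$ controls the derivative from above, not from below, so it gives no upper bound on the distance between level sets; and in this nonlocal, second-order setting there is no direct linear-instability argument excluding a heteroclinic to $b$. This exclusion is in fact the core of the paper's proof. Assuming $\phi_1(+\infty)=b$, one first shows (Proposition~\ref{pro 3}) a quantitative plateau lemma: if the increment of $\phi_p^j$ over a window of width $2a>2r^*$ is small, then the value there is $\varepsilon$-close to $\{0,b\}+\Z$. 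One then picks, for each $p$, points $y_p,z_p$ where $\phi_p^1$ crosses $b\pm\varepsilon$; the plateau lemma forces the window-increment $\psi_p^j$ to be bounded below by $\delta(\varepsilon)>0$ there, while at an intermediate point (coming from the assumed plateau at $b$) it tends to $0$. Evaluating the viscosity inequality for $\psi_p^2$ at its interior minimum and Taylor-expanding $F$ (this is where the $C^1$ hypothesis near $\{b\}^{N+1}$ enters) yields $0\ge f'(b)>0$. None of this mechanism is present in your sketch, and without it the limiting profile could a priori be a partial front $0\to b$ or $b\to1$.

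A minor point: your proposed bound on $|c_\varepsilon|$ via explicit barriers is different from the paper's route, which is a short blow-up argument --- rescale by $c_p$, pass to the limit, and read off $f(\bar\phi_1)\ge0$, contradicting $f(b-\varepsilon)<0$ at the normalised point.
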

\begin{proof}[Proof of Theorem  \ref{Th 1}]
The proof of Theorem \ref{Th 1} is a direct consequence of Remark \ref{rem 1} and Proposition \ref{pro 1}.
\end{proof}
For simplicity of presentation, we call  $ \tilde{F} $ as $F$ in the rest of this section and in Section \ref{sec:3}.
\subsection{{Viscosity solution}} 
In this subsection, we give the definition of viscosity solution. We first recall the definition of the upper and the lower semi-continuous envelopes
$u^{*} $  and $ u_{*} $ :
 $$ u^{*}(y)= \underset{ x \to y}{\limsup}\,u (x)$$
$$ u_{*}(y)=\underset{ x \to y}{\liminf} \,u (x).$$ 

\begin{defi}[Viscosity solution]\label{def 1}
Let $c \in \mathbb{R}$ and $ F $  be defined over 
 $ \mathbb{R}^{N+1}$. Let $ u_{1}: \mathbb{R} \to  \mathbb{R}$ and $ u_{2 }: \mathbb{R} \to  \mathbb{R}$ be two  locally bounded and upper semi-continuous functions. 
$(u_{1}, u_{2})$ is called a sub-solution on an open set $\Omega \subset \mathbb{R} $ of
\begin{equation}\label{eq 8}
\left\{
\begin{array}{lcl}                                                                                                                                                                                                                                                                                                                                                                                                                                                                                                                                                              
c\,\,\,u_{1}'(z) = \alpha_{0}(u_{2}(z)-u_{1}(z))\\
c \,\,\,u_{2}'(z)=2\,F((u_{1}(z+r_{i}))_{i=0,...,N})+\alpha_{0}(u_{1}(z)-u_{2}(z))
\end{array}
\right.
\end{equation}
if for any test function $ \psi  \in C^{1}(\Omega) $ such that $(u_{1}-\psi)$ (resp $(u_{2}-\psi))$ reaches a local maximum at a point $ z\in \Omega $ then we have
$$\begin{cases}                                                                                                                                                                                                                                                                                                                                                                                                                                                                                                                                                               
c\,\,\,\psi'(z) \le \alpha_{0}(u_{2}(z)-u_{1}(z)) \\
\big( \mbox{resp}.\,\,c \,\,\,\psi'(z) \le 2\,F((u_{1}(z+r_{i}))_{i=0,...,N})+\alpha_{0}(u_{1}(z)-u_{2}(z))\big).\\
\end{cases}$$ \medskip

\noindent Let $ u_{1}: \mathbb{R} \to  \mathbb{R}$ and $ u_{2 }: \mathbb{R} \to  \mathbb{R}$ be two  locally bounded and lower semi-continuous functions. 
$(u_{1}, u_{2})$ is called a super-solution of \eqref{eq 8} on $\Omega$ if for any test function $ \psi  \in C^{1}(\Omega) $ such that $(u_{1}-\psi)$ (resp $(u_{2}-\psi))$ reaches a local minimum at a point $ z\in \Omega $ then we have
$$\begin{cases}                                                                                                                                                                                                                                                                                                                                                                                                                                                                                                                                                               
c\,\,\,\psi'(z) \ge \alpha_{0}(u_{2}(z)-u_{1}(z)) \\
\big( \mbox{resp}.\,\,c \,\,\,\psi'(z) \ge 2\,F((u_{1}(z+r_{i}))_{i=0,...,N})+\alpha_{0}(u_{1}(z)-u_{2}(z))\big).\\
\end{cases}$$ \medskip

Finally, a locally bounded functions $ (u_{1}, u_{2})$ is called a viscosity solution of   (\ref{eq 8}) if  $ ((u_{1})^{*}, (u_{2})^{*})$ is a 
sub-solution and $ ((u_{1})_{*}, (u_{2})_{*}) $ is a super-solution.
\end{defi}

\subsection{Hull  fonction}
We present the notion of hull function for (\ref{eq 3}). This result has been proved in \cite[Theorem 1.10]{FIM2}.
\begin{pro}[Existence of hull functions]\label{defi 2}  
Let F be a given function satisfying ($\tilde{A}$) and let $p > 0 $. Then there exists a unique  $ \lambda_{p} \in \mathbb{R} $ such that there exists two 
locally bounded functions $ h_{p}:\mathbb{R} \to \mathbb{R} $ and  $ g_{p}:\mathbb{R} \to \mathbb{R}$ satisfying (in the viscosity sense): 
\begin{equation}\label{eq 9}
\left\lbrace
\begin{array}{lcl}
\lambda_{p} h_{p}'(x)= \alpha_{0} (g_{p}(x)-h_{p}(x)) \\
\lambda_{p}  g_{p}'(x)= 2 F((h_{p}(x+p\,r_{i}))_{i=0...n} )+ \alpha_{0} (h_{p}(x)-g_{p}(x)) \\
 h_{p}(x+1)=h_{p}(x)+1 \\
 g_{p}(x+1)=g_{p}(x)+1 \\
 h_{p}'(x) \geq 0\\
  g_{p}'(x) \geq 0.
\end{array}
\right.
\end{equation}
\end{pro}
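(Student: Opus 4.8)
The plan is to reduce the existence of hull functions for the coupled system \eqref{eq 9} to a scalar problem that is already treated in the literature, by eliminating the second component via the first equation. The key observation is that the first equation of \eqref{eq 9}, $\lambda_p h_p'(x)=\alpha_0(g_p(x)-h_p(x))$, allows us to write $g_p$ in terms of $h_p$ whenever $\lambda_p\neq 0$, namely $g_p=h_p+\frac{\lambda_p}{\alpha_0}h_p'$, and to substitute this into the second equation to obtain a single second-order equation (degenerate, nonlocal, spatially discretized) for $h_p$ alone. The resulting scalar equation is precisely of the form covered by \cite[Theorem 1.10]{FIM2}, which asserts the existence of a unique rotation number $\lambda_p$ and of a nondecreasing hull function $h_p$ satisfying $h_p(x+1)=h_p(x)+1$ in the viscosity sense; one then recovers $g_p$ from the formula above, checks it is nondecreasing (using the monotonicity assumption $2\,\partial F/\partial X_0+\alpha_0>0$ in $(\tilde A)$c), and verifies the periodicity $g_p(x+1)=g_p(x)+1$, which follows immediately from that of $h_p$ and its derivative.

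First I would set $q=p\,r^*>0$ as the effective range and rescale, writing the system in the slow variable; then I would treat the two cases $\lambda_p\neq 0$ and $\lambda_p=0$ separately. In the case $\lambda_p\neq 0$, substituting $g_p=h_p+\frac{\lambda_p}{\alpha_0}h_p'$ into the second equation gives an equation whose principal part is $\frac{\lambda_p^2}{\alpha_0}h_p''$ plus lower-order terms involving $h_p'$, $h_p$ and the nonlocal evaluations $h_p(x+p\,r_i)$; this is exactly the structure of the accelerated scalar Frenkel-Kontorova hull problem analyzed in \cite{FIM2}, and I would invoke their Theorem 1.10 to get existence and uniqueness of $\lambda_p$ together with the monotone, $1$-almost-periodic $h_p$. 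In the degenerate case $\lambda_p=0$, the first equation forces $g_p=h_p$ almost everywhere, so the second equation reduces to $0=2F((h_p(x+p\,r_i))_i)+\alpha_0(h_p-g_p)=2F((h_p(x+p\,r_i))_i)$, i.e. $F((h_p(x+p\,r_i))_i)=0$, and again this is the stationary case covered by the cited reference. Uniqueness of $\lambda_p$ is inherited directly from the scalar statement.

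The main obstacle I anticipate is the passage to and from the viscosity formulation: the substitution $g_p=h_p+\frac{\lambda_p}{\alpha_0}h_p'$ is only classical where $h_p$ is differentiable, so one must argue at the level of test functions that a viscosity sub/supersolution of the coupled system \eqref{eq 9} corresponds to a viscosity sub/supersolution of the reduced scalar equation and conversely. This is delicate because the reduction introduces $h_p''$, so the correspondence is not a literal change of unknowns but rather a doubling-of-variables type argument, or a direct verification that the hull function produced by \cite{FIM2} — which is Lipschitz and monotone — yields, after defining $g_p$ through the first equation interpreted in the a.e. sense, a pair that satisfies the coupled system in the viscosity sense. I would handle this by relying on the regularity already established in \cite{FIM2} (the scalar hull function is Lipschitz with a derivative bounded in terms of $p$ and the Lipschitz constant of $F$), which makes $g_p$ locally bounded and lets the viscosity inequalities for the pair be checked test-function by test-function; the monotonicity $h_p'\ge 0$ and the sign condition $2\,\partial F/\partial X_0+\alpha_0>0$ then give $g_p'\ge 0$. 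The remaining verifications — periodicity of $g_p$, and the fact that $\lambda_p$ does not depend on which component's test function is used — are routine.
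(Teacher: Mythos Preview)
Your reduction strategy has a genuine gap: you misidentify what \cite[Theorem 1.10]{FIM2} actually proves. That reference treats the \emph{accelerated} Frenkel--Kontorova model formulated as a first-order system with $n$ types of particles, and its hull-function theorem is stated directly for such coupled systems. The proposition here is the special case $n=2$ (the two unknowns $h_p,g_p$ playing the role of two particle types), so the paper's ``proof'' is nothing more than a direct citation of that theorem applied to the system \eqref{eq 9} as written. There is no scalar second-order hull-function result in \cite{FIM2} to invoke after your elimination of $g_p$; the equation $\frac{\lambda_p^2}{\alpha_0}h_p''+2\lambda_p h_p'=2F((h_p(\cdot+pr_i))_i)$ that you obtain is not of the form covered there, and you would have to redo the whole homogenization/cell-problem analysis for it from scratch.

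There is a second gap in your reconstruction step. You assert that $h_p'\ge 0$ together with $2\,\partial F/\partial X_0+\alpha_0>0$ yields $g_p'\ge 0$, but with $g_p=h_p+\frac{\lambda_p}{\alpha_0}h_p'$ one has $g_p'=h_p'+\frac{\lambda_p}{\alpha_0}h_p''$, and nothing you have written controls the sign of $h_p''$. The condition \eqref{eq:monV0bis} is used in the paper to make the \emph{coupled} system monotone (so that comparison principles hold for the pair $(h_p,g_p)$), not to deduce monotonicity of one component from the other after elimination. In short: drop the reduction, and apply \cite[Theorem 1.10]{FIM2} directly to the two-component system---that is exactly what the paper does, and it avoids both the viscosity-correspondence difficulty you flag and the monotonicity issue.
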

We then define
\begin{equation}\label{eq 11}
\left\lbrace
\begin{array}{lcl}
 \phi_{p}^{1}(x)=h_{p}(p\,x) \\
\phi_{p}^{2}(x)=g_{p}(p\,x) \\
\end{array}
\right.
\qquad{\rm and}\qquad c_{p}=\dfrac{\lambda_{p}}{p}. 
\end{equation}
We now give some properties of the function $(\phi_{1}^{p},\phi_{2}^{p})$.
\begin{lem}[Properties of  $(\phi_{1}^{p},\phi_{2}^{p})$]\label{lem 3} 
We assume that F satisfies ($\tilde{A}$). Then the function  $(\phi_{1}^{p},\phi_{2}^{p})$ defined in (\ref{eq 11}) satisfies in the viscosity sense
\begin{equation}\label{eq 13}
\left\lbrace
\begin{array}{lcl}
c_{p}  (\phi_{p}^{1})'= \alpha_{0} (\phi_{p}^{2}-\phi_{p}^{1}) \\
\\
c_{p} (\phi_{p}^{2})'= 2 F((\phi_{p}^{1}(x+p\,r_{i})_{i=0,...,N}))+ \alpha_{0} (\phi_{p}^{1}-\phi_{p}^{2}) \\
\\
 \phi_{p}^{1}\left(x+\dfrac{1}{p}\right)=\phi_{p}^{1}(x)+1,\;
\phi_{p}^{2}\left(x+\dfrac{1}{p}\right)=\phi_{p}^{2}(x)+1 \\
\\
 (\phi_{p}^{1})'(x) \geq 0,\;
(\phi_{p}^{2})'(x) \geq 0. 
\end{array}
\right.
\end{equation}

Moreover, if $ c_{p} \neq 0$ then there exists $ M >0 $ independent on $p$ such that
\begin{equation}\label{eq 15}
 \left| (\phi_{i}^{p})'  \right| \le \dfrac{M}{ \left| c_{p}  \right|} \quad \mbox{for}\quad 0< p <\dfrac{1}{r^{*}} \quad \mbox{and}\quad i=1,2. 
 \end{equation}
\end{lem}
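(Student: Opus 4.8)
The plan is to read \eqref{eq 13} as \eqref{eq 9} written in the rescaled variable $y=p\,x$, and then to extract \eqref{eq 15} from the (uniform) boundedness of $F$ together with a cancellation between the two equations. For the equation itself: if $\psi\in C^1$ touches $\phi^1_p$ (resp.\ $\phi^2_p$) from above at a point $x_0$, then $\tilde\psi(y):=\psi(y/p)$ touches $h_p$ (resp.\ $g_p$) from above at $y_0=p\,x_0$, and $\tilde\psi'(y_0)=\tfrac1p\,\psi'(x_0)$; substituting this in the sub-solution inequalities of \eqref{eq 9}, using $c_p=\lambda_p/p$ and the identity $h_p(p\,x_0+p\,r_i)=\phi^1_p(x_0+r_i)$, gives exactly the sub-solution inequalities of \eqref{eq 13}, and the super-solution case is symmetric. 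The relation $\phi^1_p(x+\tfrac1p)=h_p(p\,x+1)=h_p(p\,x)+1=\phi^1_p(x)+1$ (and likewise for $\phi^2_p$) is immediate, and $\phi^1_p,\phi^2_p$ are non-decreasing because they are compositions of the non-decreasing functions $h_p,g_p$ with the increasing map $x\mapsto p\,x$.

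\emph{A uniform bound on the nonlinearity.} The crucial point for \eqref{eq 15} is that, for $0<p<\tfrac1{r^*}$, the arguments of $F$ in \eqref{eq 13} stay within a fixed distance of the diagonal. Indeed $|r_i|\le r^*<\tfrac1p$, so monotonicity of $\phi^1_p$ together with $\phi^1_p(\cdot+\tfrac1p)=\phi^1_p(\cdot)+1$ gives $|\phi^1_p(x+r_i)-\phi^1_p(x)|\le 1$ for every $i$ and $x$. Hence, with $k:=\lfloor\phi^1_p(x)\rfloor\in\Z$, the vector $(\phi^1_p(x+r_i))_{i}-k(1,\dots,1)$ lies in the compact cube $[-1,2]^{N+1}$, and by the $(1,\dots,1)$-periodicity of $F$ (assumption $(\tilde A)$\,b)) and its continuity,
\[
\bigl|\,F\bigl((\phi^1_p(x+r_i))_{i}\bigr)\,\bigr|\ \le\ M_0\ :=\ \sup_{[-1,2]^{N+1}}|F|\ <\ \infty ,
\]
a constant independent of $p$ and of $x$.

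\emph{The derivative estimate.} Assume $c_p\neq0$. The two right-hand sides in \eqref{eq 13} are bounded functions — the first because $\phi^2_p-\phi^1_p$ is $\tfrac1p$-periodic and locally bounded, the second by the previous step — so each $\phi^i_p$ is a non-decreasing viscosity solution of a scalar equation $c_p(\phi^i_p)'=\Phi_i$ with $\Phi_i$ bounded and $c_p\neq0$; by the standard fact that such a function is Lipschitz, both lines of \eqref{eq 13} then hold in the classical sense at almost every $x$. Adding them, the $\alpha_0$-terms cancel and
\[
c_p\bigl((\phi^1_p)'(x)+(\phi^2_p)'(x)\bigr)=2\,F\bigl((\phi^1_p(x+r_i))_{i}\bigr)\qquad\text{a.e.}
\]
Since $(\phi^1_p)'\ge0$ and $(\phi^2_p)'\ge0$ a.e., each of them is dominated in absolute value by the sum, whence
\[
\bigl|(\phi^i_p)'(x)\bigr|\ \le\ \frac{2}{|c_p|}\,\bigl|F\bigl((\phi^1_p(x+r_i))_{i}\bigr)\bigr|\ \le\ \frac{2M_0}{|c_p|}\qquad\text{a.e., }\ i=1,2 ,
\]
which is \eqref{eq 15} with $M:=2M_0$.

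\emph{Main obstacle.} The only non-routine ingredient is the regularity upgrade, for $c_p\neq0$, from viscosity solution to almost-everywhere classical solution of each scalar equation — and hence the right to add the two equations; the rescaling, quasi-periodicity and monotonicity are bookkeeping. It should also be stressed that the uniformity of $M$ in $p$ depends entirely on the restriction $p<1/r^*$, which is precisely what confines the shifted arguments of $F$ to a fixed compact set modulo its period.
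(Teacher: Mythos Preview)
Your proof is correct. The change-of-variables argument for \eqref{eq 13} is the same as the paper's (``Equation \eqref{eq 13} is obtained by the change of variables \eqref{eq 11} in \eqref{eq 9}''), only written out in more detail via test functions.

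For the derivative bound \eqref{eq 15}, however, your route genuinely differs from the paper's. The paper bounds each equation separately: first it observes that $\psi:=\alpha_0(\phi_p^2-\phi_p^1)$ is continuous and $\tfrac1p$-periodic, hence bounded by some $M_1$, which immediately gives $|(\phi_p^1)'|\le M_1/|c_p|$; then it bounds the right-hand side of the second equation by combining the $F$-bound (obtained exactly as you do, via $|\phi_p^1(x+r_i)-\phi_p^1(x)|\le1$ and Lipschitz continuity of $F$ against the bounded diagonal $f$) with the bound on $\psi$, yielding $|(\phi_p^2)'|\le M_2/|c_p|$. You instead add the two equations so that the $\alpha_0$-terms cancel, obtaining $c_p\bigl((\phi_p^1)'+(\phi_p^2)'\bigr)=2F(\cdots)$, and then exploit the non-negativity of each derivative to dominate either one by the sum. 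This is a neat shortcut: your final constant $M=2M_0$ depends only on $\sup|F|$ over a fixed compact cube and is thus manifestly independent of $p$, whereas in the paper's argument one must check separately that the bound $M_1$ on $\alpha_0(\phi_p^2-\phi_p^1)$ does not deteriorate as $p$ varies (the paper asserts this but does not spell it out). The price you pay is the regularity upgrade needed to justify adding the two equations pointwise a.e.; this is indeed routine once both right-hand sides are known to be bounded, so the trade is favourable.
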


\begin{proof}[Proof of Lemma \ref{lem 3}]
Equation (\ref{eq 13}) is obtained by the change of
variables  (\ref{eq 11}) in (\ref{eq 9}).

We now prove (\ref{eq 15}). We fix $ p>0 $ such that  
$$ \dfrac{1}{p} \ge r^{*} .$$ 
We first remark that the function $\psi$ defined by $ \psi(x)= \alpha_{0} (\phi_{p}^{2}(x)-\phi_{p}^{1}(x))$ is bounded (because $\psi$ is continuous and periodic) then there exists $ M_{1} >0$ such that
$$ \left|  \psi(x)  \right| \le M_{1}. $$ 
This implies that 
$$\left| (\phi_{1}^{p})'  \right| \le \dfrac{M_1}{ \left| c_{p}  \right|} .$$\medskip

\noindent On the other side, since $\phi^{1}_{p}$ is non-decreasing, we have
$$\begin{cases}
\left|\phi_{p}^{1}(x+r_{i})-\phi_{p}^{1}(x) \right| \le  \left| \phi_{p}^{1}\left(x+\dfrac{1}{p}\right)-\phi_{p}^{1}(x)  \right|=1 \quad \mbox{if}\quad r_{i} \ge 0  \\
\\
\left|\phi_{p}^{1}(x+r_{i})-\phi_{p}^{1}(x) \right| \le  \left|  \phi_{p}^{1}\left(x-\dfrac{1}{p}\right)-\phi_{p}^{1}(x)  \right|=1 \quad \mbox{if}\quad r_{i} \le 0 . \\
\end{cases}$$ 
Moreover, using that $ F \in \Lip(\mathbb{R}^{N+1}) $, we get
$$ \left| F((\phi_{p}^{1}(x+r_{i}))_{i=0,..., N})- F((\phi_{p}^{1}(x))_{i=0,...,N}) \right| \le L \begin{vmatrix} 
1 \\
\vdots\\
1
 \end{vmatrix}  =: L^{1}.$$

\noindent On the other hand, $ f$ is bounded (because
 $f$  is
Lipschitz continuous and periodic). Therefore
$$ \left|   F((\phi_{p}^{1}(x+r_{i})_{i=0,..., N})) \right| \le L^{1}+\left| f\right|_{L^{\infty}(\mathbb{R}) }$$ 
and 
$$  \left| 2 \,F((\phi_{p}^{1}(x))_{i=0,...,N})+ \alpha_{0} (\phi_{p}^{2}(x)-\phi_{p}^{1}(x)) 
 \right| \le 2 (L^{1}+\left| f \right|_{L^{\infty}(\mathbb{R})})+M_{1}=: M_{2}.$$
This implies that
$$ \left|  (\phi_{2}^{p})'   \right| \le \dfrac{M_{2}}{  \left| c_{p}  \right|}. $$ 
Taking $M=\max(M_1,M_2)$, we get the desired result.
\end{proof}
\subsection{Useful results for monotone functions}
In this subsection, we recall some results about monotone function that will be used later for the proof of Proposition \ref{pro 1}. 
We state Helly's Lemma and the equivalence between viscosity and almost everywhere solution. \medskip

\noindent First we recall  Helly's Lemma which gives the convergence of subsequence in the almost everywhere sense.
\begin{lem}[Helly's Lemma]\label{lem 4}
Let  $(g_{n})_{n \in  \mathbb{N}} $  be a sequence of non-decreasing functions on $[a,b]$ verifying $ \left| g_{n} \right| \le C $. Then there exists a subsequence
 $ (g_{n_{j}})_{j\in \N} $ such that 
$$ g_{n_{j}} \to g\quad \mbox{a.e. on}\quad [a,b] $$
where $g$ is non-decreasing on $ [a, b] $ and $ \left| g \right| \le C $.

Moreover, if $(g_{n})_{n\in\N}$ is a sequence of non-decreasing functions on
a bounded interval $I$ and if 
$$g_{n}\rightarrow
g\quad\mbox{a.e. on}\quad I
$$
with $g$ constant on $\mathring{I},$
then for every closed subset interval $I'\subset \mathring{I},$
$$g_{n}\rightarrow g\quad\mbox{uniformly on}\quad I'.$$
 \end{lem}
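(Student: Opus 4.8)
## Proof plan for Helly's Lemma (Lemma \ref{lem 4})

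The plan is to prove the two parts separately. For the first part (the classical Helly selection theorem), I would begin with a diagonal extraction over a countable dense set. Let $D = \{x_1, x_2, \dots\} \subset [a,b]$ be a countable dense subset (one can take $\mathbb{Q} \cap [a,b]$ together with the endpoints). Since $|g_n(x_1)| \le C$ for all $n$, Bolzano--Weierstrass gives a subsequence along which $g_n(x_1)$ converges; passing to a further subsequence makes $g_n(x_2)$ converge, and so on. The usual Cantor diagonal argument then produces a single subsequence $(g_{n_j})$ such that $g_{n_j}(x)$ converges for every $x \in D$; call the limit $\bar g(x)$ on $D$. Monotonicity of each $g_n$ forces $\bar g$ to be non-decreasing on $D$, and $|\bar g| \le C$. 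Then I would extend $\bar g$ to all of $[a,b]$ by setting $g(x) = \sup\{\bar g(y) : y \in D,\ y \le x\}$ (with the convention $g(a) = \inf_D \bar g$ if needed); this $g$ is non-decreasing on $[a,b]$ with $|g| \le C$.

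The next step is to show $g_{n_j}(x) \to g(x)$ at every continuity point $x$ of $g$ — and since $g$ is monotone, it has at most countably many discontinuities, hence is continuous a.e. on $[a,b]$, which gives a.e. convergence. To get convergence at a continuity point $x$: given $\varepsilon > 0$, pick $y, z \in D$ with $y < x < z$ and $g(z) - g(y) < \varepsilon$ (possible by continuity of $g$ at $x$ and density of $D$). For $j$ large, $|g_{n_j}(y) - \bar g(y)| < \varepsilon$ and $|g_{n_j}(z) - \bar g(z)| < \varepsilon$; monotonicity of $g_{n_j}$ sandwiches $g_{n_j}(x)$ between $g_{n_j}(y)$ and $g_{n_j}(z)$, and monotonicity relates $\bar g(y), \bar g(z)$ to $g(x)$, so $|g_{n_j}(x) - g(x)| \le 2\varepsilon + (g(z)-g(y)) < 3\varepsilon$. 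That finishes the first part. (One may optionally note that on a countable set one can extract a further subsequence converging everywhere, but a.e. convergence is all that is claimed.)

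For the second part, suppose $g_n \to g$ a.e. on the bounded interval $I$ with $g \equiv \kappa$ constant on $\mathring I$, and let $I' = [\alpha', \beta'] \subset \mathring I$ be a closed subinterval. Pick points $\alpha'', \beta'' \in \mathring I$ at which $g_n \to g = \kappa$ and with $\alpha'' < \alpha' \le \beta' < \beta''$ (such points exist since the convergence set is dense, being co-null, in $\mathring I$). For any $x \in I'$, monotonicity of $g_n$ gives $g_n(\alpha'') \le g_n(x) \le g_n(\beta'')$, hence
\[
\kappa - \varepsilon \;\le\; g_n(\alpha'') \;\le\; g_n(x) \;\le\; g_n(\beta'') \;\le\; \kappa + \varepsilon
\]
for all $n$ large enough (uniformly in $x \in I'$, since only the two fixed endpoints are involved). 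Since $g(x) = \kappa$ on $I'$, this shows $\sup_{x \in I'} |g_n(x) - g(x)| \le \varepsilon$ for $n$ large, i.e. uniform convergence on $I'$.

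The main obstacle — really the only subtle point — is organizing the passage from pointwise convergence on the dense set $D$ to a.e. convergence on all of $[a,b]$ while keeping track of the one-sided limits at jump points; the sandwiching argument via monotonicity is what makes this clean, and it is the same monotonicity trick that trivializes the second part. Everything else is the standard diagonal extraction.
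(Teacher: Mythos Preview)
Your proof is correct. The paper itself does not give a proof of this lemma: it simply cites \cite[Section 3.3, p.~70]{AGS08} for the first (classical) part and \cite[Lemma 2.10]{AFM12} for the second part. Your argument for part one is exactly the standard proof of Helly's selection theorem (diagonal extraction on a countable dense set, extension by monotonicity, convergence at continuity points of the monotone limit), and your sandwiching argument for part two --- trapping $g_n(x)$ between $g_n(\alpha'')$ and $g_n(\beta'')$ at two fixed points of a.e.\ convergence flanking $I'$ --- is precisely the elementary argument one would expect in \cite{AFM12}. So you have supplied the details the paper omits, with no meaningful difference in approach.
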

 \begin{proof}
The first part of this lemma is the classical Helly's Lemma and a proof can be found in \cite[Section 3.3, p. 70]{AGS08} while the second part is proved in \cite[Lemma 2.10]{AFM12}.
 \end{proof}
 
 \noindent Finally, after the use of Lemma \ref{lem 4} we often need to apply the following lemma (which proof is very similar to the one of \cite[Lemma 2.11]{AFM12}) in order to get a solution in the viscosity sense.
\begin{lem}[Equivalence between viscosity and a.e. solutions]\label{lem 6}
We assume that $ F $ satisfies ($\tilde{A}$). Then $\phi_{1}$ and $ \phi_{2} $ are viscosity solutions of 
\begin{equation}\label{eq 17}
\left\lbrace
\begin{array}{lcl}
0 = \alpha_{0}(\phi_{2}(x)-\phi_{1}(x)), \\
0 =2\,\,F((\phi_{1}(x+r_{i}))_{i=0....N})+\alpha_{0} (\phi_{1}(x)-\phi_{2}(x)) 
\end{array}
\right.
\end{equation}
if and only if $ \phi_{1}$ and $\phi_{2} $ are solutions in the almost everywhere sense of the same equation.
 \end{lem}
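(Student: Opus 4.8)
The plan is to prove the equivalence in Lemma~\ref{lem 6} by following the standard argument relating viscosity solutions to almost-everywhere solutions for a system that, when $c=0$, degenerates into an algebraic-differential coupling. Observe first that the first equation $0 = \alpha_0(\phi_2(x) - \phi_1(x))$ forces $\phi_2 = \phi_1$ pointwise (since $\alpha_0 > 0$), so in fact the system collapses to the single scalar relation $0 = 2F((\phi_1(x+r_i))_{i=0,\dots,N})$ with $\phi_2 \equiv \phi_1$; both the viscosity and the a.e. formulations must be read with this in mind. Since $\phi_1$ and $\phi_2$ are (by the standing context from Lemma~\ref{lem 4} and the construction) non-decreasing, they are differentiable a.e., and the nonlocal term $x \mapsto 2F((\phi_1(x+r_i))_{i})$ is a bounded function; so the content of the lemma is really about how the equation ``$0 = (\text{bounded measurable function})$'' is tested, for which the derivative terms are absent and the usual subtleties about which inequality survives the sup/inf envelope reduce to continuity of the right-hand side.

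First I would prove the easy direction: a.e. solution $\Rightarrow$ viscosity solution. Assume $\phi_1, \phi_2$ satisfy \eqref{eq 17} for a.e.\ $x$. For the viscosity inequalities at a point $z$ where $(\phi_1 - \psi)$ has a local max, one uses that $\phi_1$ is monotone hence continuous except on a countable set, and that the map $x \mapsto F((\phi_1(x+r_i))_i)$ inherits this; one approximates $z$ by points $x_k \to z$ of differentiability where the equation holds, and passes to the limit using the Lipschitz continuity of $F$ together with the monotonicity of $\phi_1$ to control $\phi_1(x_k + r_i) \to \phi_1(z \pm r_i)$ along one-sided limits. Because there are no derivative terms in \eqref{eq 17} (the coefficient is $0$, not $c\neq 0$), the test function $\psi$ never actually enters, so the viscosity sub/super-solution conditions are just the pointwise inequalities $0 \le 2F(\cdots) + \alpha_0(\phi_1 - \phi_2)$ and $0 \le \alpha_0(\phi_2 - \phi_1)$ evaluated with upper/lower envelopes; the monotonicity of $F$ in the non-zero slots and the sign condition $2\,\partial_{X_0}\tilde F + \alpha_0 > 0$ are what let one compare $F$ evaluated at the envelope with $F$ evaluated at nearby a.e.-points. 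I would cite that this is precisely the mechanism of \cite[Lemma 2.11]{AFM12} and adapt the bookkeeping.

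For the converse, a.e.\ solution follows from viscosity solution: here one uses that a monotone function has a well-defined left and right limit everywhere and is differentiable Lebesgue-a.e., so at a.e.\ point $\phi_1, \phi_2$ are simultaneously continuous and the point is a Lebesgue point of the nonlocal term. At such a point one can use constant (or linear) test functions touching $\phi_i$ from above and below — the monotone function $\phi_i$ can be touched from above and from below at any continuity point by suitable $C^1$ functions — to extract both inequalities, hence the equality, in \eqref{eq 17}. The main obstacle, and where care is needed, is the treatment of the nonlocal argument: one must ensure that at the chosen point the quantities $\phi_1(z+r_i)$ are the ``correct'' values (i.e.\ that $z+r_i$ is also a continuity point of $\phi_1$ for every $i$), which holds for all $z$ outside a countable exceptional set since each $\phi_1(\cdot + r_i)$ has only countably many jumps; and one must handle the envelopes $(\phi_i)^*$ and $(\phi_i)_*$ correctly, exploiting that for a non-decreasing function the envelopes coincide with the function itself off a countable set and that $(\phi_i)^* = (\phi_i)_*$ precisely at continuity points. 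Once these measure-zero/countable-set issues are dispatched, the equivalence is immediate because, as noted, the absence of genuine derivative terms makes the viscosity conditions collapse to pointwise algebraic inequalities. I expect the write-up to be short, essentially a reference to \cite[Lemma 2.11]{AFM12} with the remark that the argument there applies verbatim once one notes $\phi_2 \equiv \phi_1$ and that $F\in \Lip(\mathbb{R}^{N+1})$ guarantees the needed continuity of the nonlocal term along monotone sequences.
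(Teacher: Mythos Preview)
Your proposal is correct and takes essentially the same approach as the paper: the paper does not give a self-contained proof of this lemma but simply remarks that the argument is ``very similar to the one of \cite[Lemma~2.11]{AFM12}'', which is precisely the reference you identify and whose mechanism you spell out. Your sketch is in fact more detailed than what the paper provides.
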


\section{Construction of a traveling wave}\label{sec:3}
This section is divided into two subsections. In the first one, we control the velocity of propagation and give some properties on the plateau of the profiles.
The second subsection is devoted to the proof of Proposition \ref{pro 1}.
\subsection{{Preliminary results}}
We begin to show that the velocity $c_p$ is uniformly bounded in $p$.
 \begin{lem}[Velocity $ c_{p} $ is bounded]\label{lem 8}
Under the assumption $(\tilde{A})$ and $(B)$, let $ c_{p} $ be the velocity given by (\ref{eq 11}). Then there exists $ C > 0 $ such that 
$$    \left| c_{p} \right| \le C \quad\mbox{ for}\quad 0 < p < \dfrac{1}{r^{*}}   \quad\mbox{ with }\quad r^{*}=\displaystyle \max_{i=0,...,N}  \left| r_{i} \right|. $$
 \end{lem}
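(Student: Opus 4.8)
The plan is to argue by contradiction: suppose that along some sequence $p_n \to \bar p \in [0, 1/r^*]$ (or $p_n$ ranging in the fixed interval) we have $|c_{p_n}| \to +\infty$. I would then exploit the a priori gradient bound \eqref{eq 15} from Lemma \ref{lem 3}, which gives $|(\phi^{p_n}_i)'| \le M/|c_{p_n}| \to 0$. Combined with the normalization $\phi^{p_n}_i(x + 1/p_n) = \phi^{p_n}_i(x) + 1$, this says the profiles are becoming flatter and flatter while still being forced to increase by $1$ over a period; after a suitable translation (fixing, say, $\phi^{p_n}_1(0) = b$, which is possible since $\phi^{p_n}_1$ is continuous, non-decreasing and surjective onto $\R$ modulo the periodicity relation), Helly's Lemma \ref{lem 4} extracts a subsequence along which $\phi^{p_n}_1 \to \phi_1$ and $\phi^{p_n}_2 \to \phi_2$ a.e., with $\phi_1, \phi_2$ non-decreasing. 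The vanishing gradient bound forces $\phi_1$ and $\phi_2$ to be (locally) constant, hence equal to the common constant $b$ near $0$; moreover from the first equation $c_{p_n}(\phi^{p_n}_1)' = \alpha_0(\phi^{p_n}_2 - \phi^{p_n}_1)$ one gets $\phi^{p_n}_2 - \phi^{p_n}_1 \to 0$, so $\phi_1 = \phi_2 =: \phi$ as well.

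The next step is to pass to the limit in the equation. Dividing the second equation of \eqref{eq 13} by... actually better: I would test the system against $C^1$ functions and use the stability of viscosity solutions. Writing the second equation as
$$
c_{p_n}(\phi^{p_n}_2)' - \alpha_0(\phi^{p_n}_1 - \phi^{p_n}_2) = 2F\big((\phi^{p_n}_1(x + p_n r_i))_{i}\big),
$$
the left-hand side, when divided by the (now huge) factor controlling the derivative, is delicate; the clean route is instead to observe that $(\phi^{p_n}_1, \phi^{p_n}_2)$ solves \eqref{eq 13} with $c_{p_n} \to \infty$, so after dividing the whole system by $c_{p_n}$ one formally gets $(\phi_1)' = 0$ and $(\phi_2)' = 0$ in the limit together with $0 = \alpha_0(\phi_2 - \phi_1)$ and the relation forcing $2F((\phi_1(x + \bar p r_i))_i) + \alpha_0(\phi_1 - \phi_2) = c_{p_n}(\phi_2^{p_n})'$; the right side does not obviously vanish. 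So the correct limiting equation is obtained by the standard stability argument for viscosity solutions (Definition \ref{def 1}): since $\phi^{p_n}_2 - \psi$ attains a local max/min and the gradient term $c_{p_n}\psi'(z)$ is the quantity being bounded, one must be careful. I would handle this exactly as in \cite{AFM12}: use that $\phi^{p_n}_2$ converges uniformly on compact subsets of the plateau by the second part of Lemma \ref{lem 4}, deduce $\phi = b$ is a viscosity (hence a.e., by Lemma \ref{lem 6}) solution of $0 = 2F((\phi(x + \bar p r_i))_i) + \alpha_0(\phi - \phi)$, i.e. $F(b,\dots,b) = f(b) = 0$, which is consistent — so no contradiction yet from this alone.

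To actually get the contradiction I would use the instability condition $f'(b) > 0$ from assumption $(B)$, together with the sign of $c_{p_n}$. This is the heart of the matter and the main obstacle: one localizes near the point where $\phi^{p_n}_1 = b$, uses the $C^1$ smoothness of $F$ near $\{b\}^{N+1}$ and monotonicity in the non-zero coordinates to compare $F((\phi^{p_n}_1(x + p_n r_i))_i)$ with $f(b) + f'(b)\cdot(\phi^{p_n}_1(x) - b)$ up to controllable errors (the argument of $F$ is within $O(1/|c_{p_n}|)$ of $(b,\dots,b)$ by the gradient bound and the translations $p_n r_i$ being bounded), and then integrates the second ODE over, say, $[0, 1/p_n]$ or an $O(1)$ interval. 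Since $\int (\phi^{p_n}_2)' \ge 0$ with total variation $1$ over a period, while the right side picks up a definite-sign contribution from $f'(b) > 0$ times the (positive) oscillation of $\phi^{p_n}_1$ around $b$, one derives that $c_{p_n}$ is actually controlled — i.e. the crossing of the level $b$ cannot happen "instantaneously," which is precisely what $|c_{p_n}| \to \infty$ would require given the flatness. Formally this is the same energy/integration argument used in \cite{AFM12} for the overdamped case, adapted to the coupled two-component system by eliminating $\phi^{p_n}_2$ via the first equation; I expect the bookkeeping of the coupling terms $\alpha_0(\phi_1 - \phi_2)$ and the use of \eqref{eq:monV0bis} to be the only genuinely new point compared to \cite{AFM12}.
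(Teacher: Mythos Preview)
Your approach has a genuine gap: you are missing the key rescaling that makes the argument work. The paper does \emph{not} pass to the limit in the original variables (where, as you correctly observe, the profiles flatten to a constant and one only recovers $f(b)=0$, which is no contradiction). Instead it sets
\[
\bar\phi_p^i(x)=\phi_p^i(c_p\,x),
\]
so that the rescaled functions solve
\[
(\bar\phi_p^1)'=\alpha_0(\bar\phi_p^2-\bar\phi_p^1),\qquad
(\bar\phi_p^2)'=2F\big((\bar\phi_p^1(x+r_i/c_p))_i\big)+\alpha_0(\bar\phi_p^1-\bar\phi_p^2).
\]
Two things happen simultaneously: (i) the gradient bound \eqref{eq 15} becomes $|(\bar\phi_p^i)'|\le M$ uniformly in $p$ (so the limit is \emph{not} constant and one can use Ascoli, not Helly), and (ii) the shifts $r_i/c_p\to 0$, so in the limit the nonlocal term collapses to $f(\bar\phi^1)$. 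The limiting system is a genuine ODE with no $c_p$ in it, and stability of viscosity solutions applies directly with no bookkeeping.

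The second point you miss is the normalization: the paper fixes $\bar\phi_p^1(0)=b-\varepsilon$, not $b$. In the limit one gets $(\bar\phi^1)'\ge 0$ and $(\bar\phi^2)'\ge 0$, hence from the two equations $\alpha_0(\bar\phi^2-\bar\phi^1)\ge 0$ and $2f(\bar\phi^1)+\alpha_0(\bar\phi^1-\bar\phi^2)\ge 0$; adding yields $f(\bar\phi^1(x))\ge 0$ for all $x$. Evaluating at $x=0$ gives $f(b-\varepsilon)\ge 0$, contradicting $f_{|(0,b)}<0$ from assumption $(B)$. So only the \emph{sign} of $f$ on $(0,b)$ is used, never $f'(b)>0$ nor the $C^1$ smoothness near $\{b\}^{N+1}$; your proposed route through a linearization and an integration over a period is unnecessary here and, as you yourself note, is where your argument becomes vague.
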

\begin{proof}
We consider the functions $ \phi_{p}^{1}$ and $ \phi_{p}^{2} $ given by (\ref{eq 11}) and satisfying (\ref{eq 13}). Let $c_{p}$
be the associated velocity given by (\ref{eq 11}). We assume by contradiction that when $ p \to p_{0} \in [0, \dfrac{1}{r^{*}}]$
$$ \lim_{p \to p_{0}} c_{p}=+\infty$$
(the case $ c_{p} \to -\infty $ being similar). Let
$\bar{\phi}_{p}^{1}=\phi_{p}^{1}(c_{p}\,x)$ and  $\bar{\phi}_{p}^{2}=\phi_{p}^{2}(c_{p}\,x)$ solution of
\begin{equation}\label{eq::1}
\begin{cases}
   (\bar{\phi}_{p}^{1})'= \alpha_{0} (\bar{\phi}_{p}^{2}-\bar{\phi}_{p}^{1}) \\
  (\bar{\phi}_{p}^{2})'= 2\,F((\bar{\phi}_{p}^{1}(x+\dfrac{r_{i}}{c_{p}}))_{i=0,...,N})+\alpha_{0}(\bar{\phi}_{p}^{1}-\bar{\phi}_{p}^{2}).
  \end{cases}
\end{equation}
According to (\ref{eq 15}), we have 
$$\begin{cases}
      \left| (\bar{\phi}_{p}^{1})'\right|=\left| c_{p} (\phi_{p}^{1})' \right| \le M \\
     \left| (\bar{\phi}_{p}^{2})'\right|=\left| c_{p} (\phi_{p}^{2})' \right| \le M
  \end{cases}
$$
for $M$ independent of $ p $. 
Since \eqref{eq::1} is invariant by space translation, we assume that 
$$ \bar{\phi}_{p}^{1}(0)=b-\varepsilon. $$
for $ \varepsilon $ small enough.
Using Ascoli theorem and diagonal extraction argument, we have, up to extract a subsequence, that
$$\begin{cases}
    \bar{\phi}_{p}^{1} \to \bar{\phi}^{1} \\
  \bar{\phi}_{p}^{2} \to \bar{\phi}^{2}. 
  \end{cases}
$$ 
Moreover, by stability of viscosity solutions, $\bar{\phi}^{1}\,\,\mbox{and}\,\,\bar{\phi}^{2} $ satisfy
$$ \begin{cases}
(\bar{\phi}^{1})'(x)= \alpha_{0} (\bar{\phi}^{2}(x)-\bar{\phi}^{1}(x)) \\
  (\bar{\phi}^{2})'(x)= 2\,F((\bar{\phi}^{1}(x))_{i=0,...,N})+\alpha_{0}(\bar{\phi}^{1}(x)-\bar{\phi}^{2}(x))
\end{cases}
$$
and
$$ \bar \phi^{1}(0)=b-\varepsilon. $$
Since $ (\bar{\phi}^{1}_{p})' \ge 0, (\bar{\phi}^{2}_{p})' \ge 0 $, we have $ (\phi^{1})'\ge 0, (\phi^{2})' \ge 0 $.
This implies that 
$$\begin{cases}
   \alpha_{0}(\bar{\phi}^{2}(x)-\bar{\phi}^{1}(x)) \ge 0 \\
   2\,f(\bar{\phi}^{1}(x))+\alpha_{0}(\bar{\phi}^{1}(x)-\bar{\phi}^{2}(x)) \ge 0. \\
  \end{cases}
$$
 Therefore
 $$ 2\,f(\bar{\phi}_{1}(x)) \ge 0. $$
 In particular, $ 2\,f(\bar{\phi}^{1}(0))= 2\,\,f(b-\varepsilon)\ge 0$, which is a contradiction since
 $ f(b-\varepsilon) < 0$ (see assumption $ (B)$).
 
\end{proof}

\noindent We continue with some properties on the plateau of the profiles. The following lemma shows that if one of the profile have a large enough plateau then the other profile has the same plateau.
\begin{lem}[Properties on the plateau of the profiles]\label{lem:2}
Let $(\phi_{1}, \phi_{2})$ be solution of 
\begin{equation}\label{eq:03}\begin{cases}
  c \phi_{1}'(x)=\alpha_{0} (\phi_{2}(x)-\phi_{1}(x)) \\
  c \phi_{2}'(x)=2\,\,F((\phi_{1}(x+r_{i}))_{i=0,...,N})+\alpha_{0} (\phi_{1}(x)-\phi_{2}(x))\\
  \phi_1'\ge0,\; \phi_2'\ge 0.
 \end{cases}
\end{equation}
We assume that there exists a constant $C$, a point $x_0\in \R$ and $a>r^*$ such that
$$\phi_1(x)=C\quad\forall x\in (x_0-a,x_0+a)\quad{\rm or}\quad\phi_2(x)=C\quad \forall x\in (x_0-a,x_0+a).$$
Then 
$$\phi_1(x)=\phi_2(x)=C\quad\forall x\in (x_0-a,x_0+a).$$
\end{lem}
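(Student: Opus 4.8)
The plan is to treat the two cases separately and then exploit the monotonicity built into the system. Suppose first that $\phi_1 \equiv C$ on $(x_0-a,x_0+a)$. Since $\phi_1$ is constant there, $\phi_1'=0$ a.e.\ on that interval, and the first equation of \eqref{eq:03} gives $\alpha_0(\phi_2(x)-\phi_1(x))=c\phi_1'(x)=0$ for a.e.\ $x$ in the interval; since $\alpha_0>0$ and both profiles are monotone (hence continuous except at countably many points, and the relation $\phi_2=\phi_1$ propagates by continuity from the right-hand side as well), we get $\phi_2(x)=\phi_1(x)=C$ for all $x\in(x_0-a,x_0+a)$. This case is essentially immediate; the substance of the lemma is the other implication.

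Now suppose instead that $\phi_2 \equiv C$ on $(x_0-a,x_0+a)$. Then $\phi_2'=0$ a.e.\ there, so the second equation of \eqref{eq:03} yields
$$2\,F((\phi_1(x+r_i))_{i=0,\dots,N}) + \alpha_0(\phi_1(x)-\phi_2(x)) = 0 \quad \text{for a.e. } x\in(x_0-a,x_0+a).$$
We do not yet know $\phi_1$ is constant, so write $\phi_1(x)=C-\delta(x)$ with $\delta\ge 0$ non-increasing on the interval (since $\phi_1$ is non-decreasing and $\phi_1\le\phi_2$ would follow once we know... actually we must be careful: a priori $\phi_1$ need not be $\le C$). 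The key point is that for $x$ in the slightly smaller interval $(x_0-a+r^*, x_0+a-r^*)$ — which is nonempty precisely because $a>r^*$ — every shifted argument $x+r_i$ still lies in $(x_0-a,x_0+a)$. The idea is to compare the equation at a point where $\phi_1$ attains its infimum on this sub-interval with the equation at a point where it attains its supremum, using the monotonicity of $F$ in the arguments $X_i$ with $i\ne 0$ together with the structural inequality $2\,\partial F/\partial X_0 + \alpha_0 > 0$ from \eqref{eq:monV0}. Concretely, I would argue that the map $x\mapsto 2F((\phi_1(x+r_i))_i)+\alpha_0\phi_1(x)$ is monotone in a way dictated by the sign condition, and since it equals the constant $\alpha_0 C$ a.e., the monotone function $\phi_1$ must itself be constant on the sub-interval; then a bootstrap/continuation argument extends constancy to the full interval $(x_0-a,x_0+a)$, and finally the first equation forces $\phi_1=\phi_2=C$ as in the first case.

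The main obstacle is making the monotonicity argument in the second case rigorous despite the nonlocal (delayed) terms $\phi_1(x+r_i)$ and the fact that $F$ is only Lipschitz (so $\partial F/\partial X_0$ is merely an a.e.\ object, and \eqref{eq:monV0} must be used in an integrated/incremental form rather than pointwise). The clean way around this is: for $y<x$ both in the sub-interval, subtract the two copies of the identity $2F((\phi_1(\cdot+r_i))_i)+\alpha_0\phi_1(\cdot)=\alpha_0 C$; using that $\phi_1$ is non-decreasing, each shifted argument satisfies $\phi_1(y+r_i)\le\phi_1(x+r_i)$, so by monotonicity of $F$ in the non-zero slots and the incremental bound coming from \eqref{eq:monV0} (namely $2[F(X)-F(X')]+\alpha_0(X_0-X_0')>0$ whenever $X\ge X'$ componentwise with $X_0>X_0'$), one concludes $\phi_1(y)=\phi_1(x)$. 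This gives constancy on $(x_0-a+r^*,x_0+a-r^*)$; then, knowing $\phi_1$ is constant on an open interval of length $2(a-r^*)>0$, one re-runs the first-equation argument to propagate the value, and a standard connectedness/maximal-interval argument closes the gap to recover constancy on all of $(x_0-a,x_0+a)$ and hence $\phi_1=\phi_2=C$ there.
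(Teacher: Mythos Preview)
Your core idea in the second case---subtracting the identity
\[
2F\bigl((\phi_1(x+r_i))_{i=0,\dots,N}\bigr)+\alpha_0\phi_1(x)=\alpha_0 C
\]
at two points $y<x$ in $(x_0-a,x_0+a)$ and invoking the incremental form of \eqref{eq:monV0}---is correct, but you have not noticed that it already finishes the job on the \emph{full} interval: the only ingredients you use are that the identity holds (a.e.) at $x$ and at $y$, and that $\phi_1(y+r_i)\le\phi_1(x+r_i)$, which follows from global monotonicity of $\phi_1$ regardless of where $y+r_i,\,x+r_i$ land. The restriction to the sub-interval $(x_0-a+r^*,x_0+a-r^*)$ is therefore unnecessary, and the ``bootstrap/maximal-interval'' step you sketch is both superfluous and, as written, not clearly implementable (re-running the first equation only tells you $\phi_2=\phi_1$ on the set where $\phi_1$ is already known constant; it does not enlarge that set). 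Drop that paragraph and simply conclude $\phi_1\equiv\text{const}$ on $(x_0-a,x_0+a)$, then use the first equation to identify the constant with $C$.

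For comparison, the paper takes a slightly different route: it introduces the shifted differences $\psi_j(x)=(\phi_j)_*(x+a)-(\phi_j)^*(x-a)$, observes that $\psi_2(x_0)=0$ is a minimum of $\psi_2\ge0$, and reads off a single viscosity inequality at $x_0$; the same monotonicity of $F$ in the $X_i$ ($i\neq0$) together with the strict monotonicity of $G(t)=2F(\,\cdot+t,\ldots)+\alpha_0 t$ then forces $\psi_1(x_0)=0$, i.e.\ $\phi_1$ constant on $(x_0-a,x_0+a)$. The paper's device works cleanly at the level of viscosity (semi-continuous) envelopes without ever asserting that the second equation holds pointwise a.e.; your two-point subtraction is a bit more elementary but relies on the a.e.\ validity of the equation on the plateau, which is fine here since $\phi_2$ is genuinely constant (hence $C^1$) there.
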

\begin{proof}
If $\phi_1(x)=C\quad \forall x\in (x_0-a,x_0+a)$, then $\phi_1'(x)=0\,\,\;\forall x\in(x_0-a,x_0+a)$ and the first equation of \eqref{eq:03} implies the result.

Let us then assume that
$$\phi_2(x)=C\quad \forall x\in (x_0-a,x_0+a).$$
We set 
$$\psi_1(x)=(\phi_1)_*(x+a)-(\phi_1)^*(x-a)\quad {\rm and}\quad\psi_2(x)=(\phi_2)_*(x+a)-(\phi_2)^*(x-a).$$
Then $(\psi_1,\psi_2)$ is solution of
\begin{equation}\label{eq:04}
\left\{\begin{array}{rcl}
c\psi_1'(x)&\ge& \alpha_0(\psi_2(x)-\psi_1(x))\\
c\psi_2'(x)&\ge& 2\left[F(((\phi_1)_*(x+a+r_i))_{i=0,\dots,N})-F(((\phi_1)^*(x-a+r_i))_{i=0,\dots,N})\right]\\
&&+\alpha_0(\psi_1(x)-\psi_2(x)).
\end{array}
\right.
\end{equation}
Since $\phi_1$ and $\phi_2$ are non-decreasing, we have $\psi_1\ge 0$ and $\psi_2\ge 0$. Moreover, $\psi_2(x_0)=0$. Hence, $x_0$ is a point of minimum of $\psi_2$ and the second equation of \eqref{eq:04} implies that
\begin{align*}
0\ge&2\left[F(((\phi_1)_*(x_0+a+r_i))_{i=0,\dots,N})-F(((\phi_1)^*(x_0-a+r_i))_{i=0,\dots,N})\right]+\alpha_0\psi_1(x_0)\\
\ge&2\left[F(( (\phi_1)^*(x_0-a)+\psi_1(x_0),  (\phi_1)^*(x_0-a+r_i))_{i=1,\dots,N})-F(((\phi_1)^*(x_0-a+r_i))_{i=0,\dots,N})\right]\\
+&\alpha_0\psi_1(x_0)
\end{align*}
where we have used the monotony of $F$ for the second inequality. We set
$$G(x)=2F(( (\phi_1)^*(x_0-a)+x,  (\phi_1)^*(x_0-a+r_i))_{i=1,\dots,N})+\alpha_0 x.$$
Then, by assumption \eqref{eq:monV0}, $G$ is strictly increasing. Using that
$$0\ge G(\psi_1(x_0))-G(0),$$
we deduce that $\psi_1(x_0)=0$ (recall that $\psi_1\ge 0$). This implies that $\phi_1$ is constant over $(x_0-a,x_0+a)$ and by the first equation of \eqref{eq:03}, this constant is also equal to $C$.

\end{proof}
In the proof of Proposition \ref{pro 1}, we will need to pas to the limit for $(\phi^1_p,\phi^2_p)$. This is the goal of the following lemma.
\begin{lem}[Passing to the limit for $(\phi^1_p,\phi^2_p)$]\label{lem:1}
For every $n\in \N$, let $(c^n,\phi_1^n,\phi_2^n)$ be a solution of
\begin{equation}\label{eq:05}\begin{cases}
  c^n (\phi_{1}^n)'(x)=\alpha_{0} (\phi_{2}^n(x)-\phi_{1}^n(x)) \\
  c (\phi_{2}^n)'(x)=2\,\,F((\phi_{1}^n(x+r_{i}))_{i=0,...,N})+\alpha_{0} (\phi_{1}^n(x)-\phi_{2}^n(x))\\
  (\phi_1^n)'\ge0,\; (\phi_2^n)'\ge 0
 \end{cases}
\end{equation}
satisfying 
$$\left\{\begin{array}{l}
\phi_1^n(x+1)\le \phi_1^n(x)+1\\
\phi_2^n(x+1)\le \phi_2^n(x)+1\\
|c^n|\le M_0\\
|c^n(\phi_1^n)'|\le M_1,\; |c^n(\phi_2^n)'|\le M_2
\end{array}\right.$$
where $M_0, M_1$ and $M_2 $ are positive constant.
We also assume that there exists $M_3>0$ and $x^*\in \R$ such that $|\phi_1^n(x^*)|\le M_3$. 

Then there exists $(c,\phi_1, \phi_2)$ such that, up to extract a subsequence, 
$c^n\to c$, $\phi_1^n\to\phi_1$ and $\phi_2^n\to\phi_2$ a.e. and $(c,\phi_1,\phi_2)$ is a viscosity solution of
\begin{equation}\label{eq:06}\begin{cases}
  c \phi_{1}'(x)=\alpha_{0} (\phi_{2}(x)-\phi_{1}(x)) \\
  c \phi_{2}'(x)=2\,\,F((\phi_{1}(x+r_{i}))_{i=0,...,N})+\alpha_{0} (\phi_{1}(x)-\phi_{2}(x))\\
  \phi_1'\ge0,\; \phi_2'\ge 0
 \end{cases}
\end{equation}
\end{lem}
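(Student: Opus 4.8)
The plan is to exploit the uniform bounds to extract limits, and then use the stability of viscosity solutions under appropriate convergence. First I would handle the scalar sequence: since $|c^n|\le M_0$, up to a subsequence $c^n\to c$ for some $c\in\R$. Next I would obtain compactness for the profiles. The bound $\phi_1^n(x+1)\le\phi_1^n(x)+1$ together with monotonicity and the normalization $|\phi_1^n(x^*)|\le M_3$ gives, for each fixed bounded interval $[a,b]$, a uniform bound $|\phi_1^n|\le C_{a,b}$ on $[a,b]$ (write $x\in[a,b]$ as lying between $x^*+k$ and $x^*+k+1$ for integer $k$ controlled by $a,b$, and use the sub-additivity in both directions). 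The first equation of \eqref{eq:05} then controls $\phi_2^n$: indeed $\alpha_0\phi_2^n = c^n(\phi_1^n)' + \alpha_0\phi_1^n$, and since $|c^n(\phi_1^n)'|\le M_1$ we get a uniform local bound on $\phi_2^n$ as well (this avoids needing a separate normalization for $\phi_2^n$). Now $\phi_1^n$ and $\phi_2^n$ are non-decreasing and locally uniformly bounded, so Helly's Lemma (Lemma \ref{lem 4}), applied on an exhausting sequence of intervals together with a diagonal extraction, yields a further subsequence with $\phi_1^n\to\phi_1$ and $\phi_2^n\to\phi_2$ a.e. on $\R$, with $\phi_1,\phi_2$ non-decreasing.

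The second step is to pass to the limit in the equations. I would distinguish two cases according to whether $c=0$.

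If $c\ne 0$, then for $n$ large $c^n\ne 0$ and $|(\phi_i^n)'|\le M_i/|c^n|$ is bounded uniformly, so $\phi_1^n,\phi_2^n$ are uniformly Lipschitz on every bounded interval; by Ascoli the a.e. convergence upgrades to locally uniform convergence, and the limits $\phi_1,\phi_2$ are Lipschitz, hence differentiable a.e. with $\phi_1',\phi_2'\ge 0$. Stability of viscosity solutions (the comparison-type stability under local uniform convergence of the solutions, using that $F$ is Lipschitz hence continuous, and that the nonlocal terms $\phi_1^n(x+r_i)\to\phi_1(x+r_i)$ locally uniformly) gives that $(c,\phi_1,\phi_2)$ is a viscosity solution of \eqref{eq:06}. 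Note $c^n(\phi_i^n)'\to c\phi_i'$ can also be seen directly from the ODEs once uniform convergence is known.

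If $c=0$, one cannot hope for uniform gradient bounds, only the a.e. convergence from Helly. Here I would first pass to the limit in the first equation: from $c^n(\phi_1^n)'=\alpha_0(\phi_2^n-\phi_1^n)$ and $|c^n(\phi_1^n)'|\le M_1$ one does not immediately get $\phi_1=\phi_2$, but letting $n\to\infty$ (a.e.) in $\alpha_0|\phi_2^n-\phi_1^n|\le M_1$ only gives boundedness; instead I would use the equation in integrated (viscosity) form. The cleaner route is: the triple $(c^n,\phi_1^n,\phi_2^n)$ is a viscosity solution, the right-hand sides depend continuously on $(\phi_1^n(x+r_i))_i$ and on the values $\phi_1^n(x),\phi_2^n(x)$, and viscosity solutions are stable under a.e. (indeed, under the weaker half-relaxed-limit) convergence of monotone solutions — this is exactly the mechanism used for monotone functions in \cite{AFM12} and underlies Lemma \ref{lem 6}. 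Concretely, one takes upper and lower semicontinuous envelopes $\phi_i^*,\; (\phi_i)_*$ of the a.e. limits, shows they are respectively sub- and super-solutions by testing against $C^1$ functions and using the local uniform control coming from monotonicity plus the a.e. convergence at continuity points, and concludes that $(c,\phi_1,\phi_2)$ solves \eqref{eq:06} in the viscosity sense; when $c=0$ this is an a.e. solution by Lemma \ref{lem 6}. Finally $\phi_1'\ge 0$ and $\phi_2'\ge 0$ pass to the limit because each $\phi_i^n$ is non-decreasing, hence so is the a.e. limit.

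The main obstacle is the case $c=0$: without gradient bounds the convergence is only a.e., so the passage to the limit in the nonlocal viscosity equation must be done carefully at continuity points of the limit profiles, justifying that test-function contacts for the limit can be perturbed to test-function contacts for $\phi_i^n$ — this is the delicate point, and it is precisely where one invokes the monotone-function machinery (Helly's Lemma, second part, giving local uniform convergence on plateaus, and the envelope argument of \cite[Lemmas 2.10–2.11]{AFM12}). Everything else is bookkeeping with the uniform bounds.
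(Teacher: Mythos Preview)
Your overall architecture matches the paper's: extract a subsequence so that $c^n\to c$, obtain local uniform bounds on $\phi_1^n$ from monotonicity plus the sub-additivity $\phi_1^n(x+1)\le\phi_1^n(x)+1$ and the normalization at $x^*$, transfer these to $\phi_2^n$ via the first equation of \eqref{eq:05} and the bound $|c^n(\phi_1^n)'|\le M_1$, and then split into the cases $c\neq 0$ and $c=0$. The case $c\neq 0$ is handled exactly as in the paper (uniform Lipschitz bounds, Ascoli, stability of viscosity solutions).

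The difference is in the case $c=0$. You propose to pass to the limit directly in the viscosity formulation via half-relaxed limits and the envelope machinery of \cite{AFM12}. The paper instead takes a more elementary and concrete route: it integrates both equations of \eqref{eq:05} over an arbitrary interval $[b_1,b_2]$, so that the left-hand sides become $c^n(\phi_i^n(b_2)-\phi_i^n(b_1))$, which tend to $0$ because $c^n\to 0$ and the $\phi_i^n$ are locally bounded; on the right-hand sides one passes to the limit by Lebesgue dominated convergence (the integrands converge a.e.\ and are dominated using the Lipschitz bound on $F$, the boundedness of $f$, and the linear-growth bounds on $\phi_i^n$). This yields the limit equations in integrated form for all $b_1<b_2$, hence a.e., and then Lemma~\ref{lem 6} upgrades this to the viscosity sense. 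The advantage of the paper's approach is that it never needs to produce test-function contacts for the limit or to control the nonlocal terms $\phi_1^n(x_n+r_i)$ at moving points $x_n$: dominated convergence handles everything at once. Your approach is in principle workable, but the step ``contacts for the limit can be perturbed to contacts for $\phi_i^n$, and the shifted values $\phi_1^n(\cdot+r_i)$ converge there'' is exactly the place where a.e.\ convergence alone is insufficient without extra argument, and Lemma~\ref{lem 6} as stated does not supply that stability---it only converts between a.e.\ and viscosity formulations once the limit equation is already known. If you want to keep your route, you should make that perturbation argument explicit; otherwise the integration-plus-dominated-convergence argument is both shorter and safer.
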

\begin{proof}
Up to translate $\phi_1$, we assume that $x^*=0$. Since $\left|  c^{n} \right|  \le M_{0}$, up to extract a subsequence, we can assume that
$$ c^n\to c\,\, \quad \mbox{as} \quad \,n \to +\infty . $$
We study two cases for $c$.

\paragraph{Case 1: $ c \neq 0 $.}
For $n$ large enough, we have $  \left| c^n \right| \ge \dfrac{\left| c \right|}{2} \neq 0$. Hence for $n$ large enough, we have
$$ \left| (\phi_{1}^{n})' \right| \le \dfrac{2 \,M_{1}}{\left| c  \right|} \quad \mbox{and }\quad \left|(\phi_{2}^{n})'\right| \le \dfrac{2 \,M_{2}}{\left| c  \right|}.$$
Using Ascoli's Theorem and the diagonal extraction argument, we can assume, up to a subsequence, that $ (\phi_{1}^{n})_{n}$ and $(\phi_{2}^{n})_{n}$ converge locally uniformly on $ \mathbb{R}$ respectively to $\phi_{1} $ and $\phi_{2} $. By stability, $\phi_{1}$ and $ \phi_{2} $ satisfy in the viscosity sense 
\begin{equation}\label{eq 27}
\left\lbrace
\begin{array}{lcl}
 c \phi_{1}'(x) = \alpha_{0}(\phi_{2}(x)-\phi_{1}(x)) \\
c \phi_{2}'(x)=2\,\,F((\phi_{1}(x+r_{i}))_{i=0,...,N})+\alpha_{0} (\phi_{1}(x)-\phi_{2}(x)) \\
\phi_{1}' \ge 0, \; \phi_{2}' \ge 0.
\end{array}
\right.
\end{equation}

\paragraph{Case 2: $ c=0$.}
 We have $ \phi_{1}^{n}(1+x) \le \phi_{1}^{n}(x)+1$. This implies, using the fact that $\phi_1^n(0)\le M_3$, that
 \begin{equation}\label{eq 28}
\left\lbrace
\begin{array}{lcl}
 \phi_{1}^{n}(x) \le \lceil x \rceil + M_3 \quad \mbox{ for}\quad x \ge 0 \\
 \phi_{1}^{n}(x) \ge - \lceil \left| x \right| \rceil -M_3 \quad \mbox{ for}\quad x \le 0
 \end{array}
\right.
\end{equation}
 Using also the fact that $0\le| c^n (\phi^n_1)'|\le M_1$, we get
\begin{equation}\label{eq 28bis}
\left\lbrace
\begin{array}{lcl}
 \phi_{2}^{n}(x) \le \lceil x \rceil + \dfrac{M_{1}}{\alpha_{0}}+M_3 \quad \mbox{ for}\quad x \ge 0 \\
 \phi_{2}^{n}(x) \ge - \lceil \left| x \right| \rceil  -M_3\quad \mbox{ for}\quad  x \le 0
\end{array}
\right.
\end{equation}
Using Helly's Lemma (Lemma \ref{lem 4}), up to extract  a subsequence, we have
 $\phi_{1}^{n} \to \phi_{1}$ a.e. and  $\phi_{2}^{n}\to\phi_{2}$ a.e.. 
This implies that
$$\begin{cases}
 c^{n} \int_{b_{1}}^{b_{2}}  (\phi_{1}^{n})'(x) dx = \alpha_{0} \int_{b_{1}}^{b_{2}}(\phi_{2}^{n}(x)-\phi_{1}^{n}(x)) dx \\
c^{n} \int_{b_{1}}^{b_{2}} (\phi_{2}^{n})'(x) dx =2\,\, \int_{b_{1}}^{b_{2}} F((\phi_{1}^{n}(x+r_{i}))_{i=0,...,N}) dx +\alpha_{0} \int_{b_{1}}^{b_{2}}
(\phi_{1}^{n}(x)-\phi_{2}^{n}(x)) dx .\\
\end{cases}$$
for every $b_{1} \le b_{2}$. That is 
$$\begin{cases}
  c^{n} (\phi_{1}^{n}(b_{2})-\phi_{1}^{n}(b_{1})) = \alpha_{0} \int_{b_{1}}^{b_{2}}(\phi_{2}^{n}(x)-\phi_{1}^{n}(x)) dx \\
 c^{n} (\phi_{2}^{n}(b_{2})-\phi_{2}^{n}(b_{1})=2\,\, \int_{b_{1}}^{b_{2}} F((\phi_{1}^{n}(x+r_{i}))_{i=0,...,N}) dx +\alpha_{0} \int_{b_{1}}^{b_{2}}
(\phi_{1}^{n}(x)-\phi_{2}^{n}(x)) dx. \\
\end{cases}$$
We have 
$$\begin{cases}
 \alpha_{0} (\phi_{2}^{n}(x)-\phi_{1}^{n}(x)) \to \alpha_{0} (\phi_{2}(x)-\phi_{1}(x))\,\,\mbox{a.e.}\, \\
2\,\,\,F((\phi_{1}^{n}(x+r_{i}))_{i=0,...,N})+ \alpha_{0} (\phi_{1}^{n}(x)-\phi_{2}^{n}(x))\,\,\to\,\,2\,\,F((\phi_{1}(x+r_{i}))_{i=0,...,N})+\\
\qquad\qquad\qquad\qquad\qquad\qquad\qquad\qquad\qquad\qquad\qquad\qquad\alpha_{0}(\phi_{1}(x)-\phi_{2}(x))\,\,\mbox{a.e.}
\end{cases}$$
and (because of  (\ref{eq 28}) and $ F $ is Lipschitz continuous and $ f$ is bounded) 
\begin{align*}
&\left| \alpha_{0} (\phi_{2}^{n}(x)-\phi_{1}^{n}(x))  \right| \le M_{1} \\
&\left| F((\phi_{1}^{n}(x+r_{i}))_{i=0,...,N}) \right |\le C (1 +\left| x\right|),
\end{align*}
i.e.
$$\left| F((\phi_{1}^{n}(x+r_{i}))_{i=0,...,N}) + \alpha_{0} (\phi_{1}^{n}(x)-\phi_{2}^{n}(x)) \right| \le C (1 +\left| x \right|) +  M_{1}.$$
Thus, using Lebesgue's dominated convergence Theorem, we pass to the limit
 as $ n \to +\infty $ and we get
$$\begin{cases}
 0 = \alpha_{0} \int_{b_{1}}^{b_{2}}(\phi_{2}(x)-\phi_{1}(x)) dx \\
 0 =2\,\, \int_{b_{1}}^{b_{2}} F((\phi_{1}(x+r_{i}))_{i=0,...,N}) dx + \alpha_{0} \int_{b_{1}}^{b_{2}}(\phi_{1}(x)-\phi_{2}(x)) dx.
\end{cases} $$
which implies that
$$\begin{cases}
 0 = \alpha_{0} (\phi_{2}(x)-\phi_{1}(x)) \,\,\mbox{a.e.}\\
 0 =2 F((\phi_{1}(x+r_{i}))_{i=0,...,N})+\alpha_{0}\,\,(\phi_{1}(x)-\phi_{2}(x))\,\,\,\,\mbox{a.e.}
\end{cases}$$
Since $ (\phi_{1}^{n})' \ge 0,\,\,(\phi_{2}^{n})' \ge 0 $, we get  $ \phi_{1}' \ge 0,\,\,\phi_{2}' \ge 0 $ and so Lemma \ref{lem 6} implies
\begin{equation}\label{eq 29}
\left\lbrace
\begin{array}{lcl}
 0 = \alpha_{0} (\phi_{2}(x)-\phi_{1}(x)) \\
  0 =2 F((\phi_{1}(x+r_{i}))_{i=0,...,N})+ \alpha_{0}\,\,(\phi_{1}(x)-\phi_{2}(x))\\
  \phi_{1}' \ge 0\\
  \phi_{2}' \ge 0 
\end{array}
\right.
\end{equation}
in the viscosity sense.

\end{proof}
We finish this subsection with the following proposition which help us to identify the value  of the plateau of the profiles.

\begin{pro}[The value of the plateau of the profile are close to the zero of $f$]\label{pro 3}
We assume that $ F $ satisfies  ($\tilde{A}$) and let $ a > r^{*}$. For every
$ \varepsilon > 0$,  there exists $ \delta= 
\delta(\varepsilon) $ such that for all function
 $ (c,\phi_{1}, \phi_{2}) $ solution of 
$$\begin{cases}
 c \phi_{1}'(x) = \alpha_{0}(\phi_{2}(x)-\phi_{1}(x)) \\
c \phi_{2}'(x)=2\,\,F((\phi_{1}(x+r_{i}))_{i=0,...,N})+\alpha_{0} (\phi_{1}(x)-\phi_{2}(x)) \\
\phi_{1}' \ge 0,\,\phi_{2}' \ge 0 \\
\phi_{1}(x+1) \le \phi_{1}(x)+1,\,\,\phi_{2}(x+1) \le \phi_{2}(x)+1 \\
\left| c \right|  \le M_{0}\\
 \left| c \phi_{1}'  \right|  \le M_{1},\,\,\left| c \phi_{2}'  \right| \le M_{2} \\
\end{cases}$$
and for all $ x_{0} \in \mathbb{R} $ satisfying
$$(\phi_{1})_{*}(x_{0}+a)-(\phi_{1})^{*} (x_{0}-a ) \le \delta (\varepsilon)\quad {\rm or} \quad  (\phi_{2})_{*}(x_{0}+a)-(\phi_{2})^{*} (x_{0}-a ) \le \delta (\varepsilon),$$
we have  
$$dist(\alpha_{1},\{0,b\}+\mathbb{Z})< \varepsilon \quad \mbox{for\,\,all}\quad \alpha_{1} \in [(\phi_{1})_{*}( x_{0}), (\phi_{1})^{*}(x_{0})]$$
and
$$dist(\alpha_{2},\{0,b\}+\mathbb{Z})< \varepsilon \quad \mbox{for\,\,all}\quad \alpha_{2} \in [(\phi_{2})_{*} ( x_{0}), (\phi_{2})^{*}(x_{0})]. $$ 
\end{pro}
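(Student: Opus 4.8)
\medskip
\noindent The plan is to argue by contradiction via a compactness argument, combining the passage to the limit of Lemma~\ref{lem:1} with the plateau property of Lemma~\ref{lem:2}. Suppose the conclusion fails for some $\varepsilon_0>0$: taking $\delta=1/n$, for each $n\in\N$ there is a triple $(c^n,\phi_1^n,\phi_2^n)$ satisfying all the listed bounds, a point $x_0^n\in\R$ with
\[
(\phi_1^n)_*(x_0^n+a)-(\phi_1^n)^*(x_0^n-a)\le\tfrac1n\quad\text{or}\quad(\phi_2^n)_*(x_0^n+a)-(\phi_2^n)^*(x_0^n-a)\le\tfrac1n ,
\]
and a value $\alpha^n\in[(\phi_1^n)_*(x_0^n),(\phi_1^n)^*(x_0^n)]$ (resp.\ in $[(\phi_2^n)_*(x_0^n),(\phi_2^n)^*(x_0^n)]$) with $\mathrm{dist}(\alpha^n,\{0,b\}+\Z)\ge\varepsilon_0$. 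The system is invariant under translation of $x$ and, since $F$ is $1$-periodic, under adding an integer to both profiles; translating $x_0^n$ to the origin and subtracting a suitable integer from $\phi_1^n$ and $\phi_2^n$, we may therefore assume $x_0^n=0$ and $|\phi_1^n(0)|\le 2$; all other hypotheses, and the number $\mathrm{dist}(\alpha^n,\{0,b\}+\Z)$, are unchanged.

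Next I would apply Lemma~\ref{lem:1} with $x^*=0$: along a subsequence, $c^n\to c$ and $\phi_i^n\to\phi_i$ a.e., with $(c,\phi_1,\phi_2)$ a viscosity solution of \eqref{eq:06} and $\phi_1,\phi_2$ non-decreasing. Since each $\phi_i^n$ is non-decreasing, the flatness bound forces the oscillation over $(-a,a)$ of the relevant profile ($\phi_1^n$ or $\phi_2^n$) to be at most $1/n$; passing to the a.e.\ limit, that limit profile is a.e.\ constant on $(-a,a)$, hence, being monotone, identically equal to some constant $C$ on $(-a,a)$. As $a>r^*$, Lemma~\ref{lem:2} upgrades this to $\phi_1=\phi_2\equiv C$ on $(-a,a)$. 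Then $\phi_2$ is $C^1$ with $\phi_2'\equiv0$ on the nonempty interval $(-(a-r^*),a-r^*)$, on which $x\mapsto F((\phi_1(x+r_i))_{i})$ is constant equal to $F(C,\dots,C)=f(C)$; the second equation of \eqref{eq:06} therefore reads $0=2f(C)$, i.e.\ $f(C)=0$. By assumption $(B)$ and the $1$-periodicity of $f$, this forces $C\in\{0,b\}+\Z$.

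It then remains to contradict $\mathrm{dist}(\alpha^n,\{0,b\}+\Z)\ge\varepsilon_0$. From $\phi_i^n\to\phi_i=C$ a.e.\ on $(-a,a)$ for $i=1,2$ and the second part of Helly's Lemma~\ref{lem 4}, we get $\phi_i^n\to C$ uniformly on $[-a/2,a/2]$; hence $(\phi_i^n)_*(0)\to C$ and $(\phi_i^n)^*(0)\to C$, and $\alpha^n$, squeezed between these, converges to $C\in\{0,b\}+\Z$, so $\mathrm{dist}(\alpha^n,\{0,b\}+\Z)\to0$, a contradiction. The four combinations (``flat $\phi_1$'' or ``flat $\phi_2$'', and ``$\alpha^n$ in the $\phi_1$- or in the $\phi_2$-interval'') are covered at once, since the limit satisfies $\phi_1=\phi_2\equiv C$ near $0$ and both $\phi_1^n$ and $\phi_2^n$ converge uniformly to $C$ there.

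The substantive work is already packaged in the two lemmas invoked: Lemma~\ref{lem:1} extracts a limiting viscosity solution from the uniform bounds, and Lemma~\ref{lem:2} turns flatness of one profile over a window $(x_0-a,x_0+a)$ with $a>r^*$ into flatness of both, using the strict monotonicity \eqref{eq:monV0}. What still needs care is essentially bookkeeping: the normalization making the limit well defined and uniformly bounded at a point, the fact that an a.e.\ limit of profiles whose oscillation over $(-a,a)$ tends to $0$ is genuinely constant there (so Lemma~\ref{lem:2} truly applies), and the passage from uniform convergence near the origin to convergence of the semicontinuous envelopes $(\phi_i^n)_*(x_0^n)$ and $(\phi_i^n)^*(x_0^n)$.
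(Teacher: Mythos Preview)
Your proof is correct and follows essentially the same approach as the paper's own argument: contradiction plus normalization, passage to the limit via Lemma~\ref{lem:1}, transfer of the plateau to both profiles via Lemma~\ref{lem:2}, identification of the constant as a zero of $f$, and finally squeezing $\alpha^n$. If anything, you are a bit more careful than the paper in two places---you spell out why the a.e.\ limit is genuinely constant on $(-a,a)$, and you justify the convergence $(\phi_i^n)_*(0),(\phi_i^n)^*(0)\to C$ via the uniform-convergence part of Helly's Lemma rather than asserting it---but the structure is the same.
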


\begin{proof}
The proof is decomposed into three steps. 

\paragraph{Step 1: Construction of a sequence.} 
We assume by contradiction that there exists $ \varepsilon >0 $ such that for all $ \delta_{n} \to 0 $ and
$ (c^n,\phi_{1}^{n},\phi_{2}^{n})$ solution of 
\begin{equation}\label{eq 22}
\left\lbrace
\begin{array}{lcl}
 c^{n} (\phi_{1}^{n})'(x) = \alpha_{0}(\phi_{2}^{n}(x)-\phi_{1}^{n}(x)) \\
c^{n} (\phi_{2}^{n})'(x)=2\,\,F((\phi_{1}^{n}(x+r_{i}))_{i=0,...,N})+\alpha_{0} (\phi_{1}^{n}(x)-\phi_{2}^{n}(x)) \\
(\phi_{1}^{n})' \ge 0,\,\,(\phi_{2}^{n})' \ge 0 \\
\phi_{1}^{n}(x+1) \le \phi_{1}^{n}(x)+1,\,\phi_{2}^{n}(x+1) \le \phi_{2}^{n}(x)+1 \\
\left| c^{n} \right|  \le M_{0}\\
 \left| c^{n} (\phi_{1}^{n})'  \right|  \le M_{1},\,\left| c^{n} (\phi_{2}^{n})'  \right| \le M_{2} 
\end{array}
\right.
\end{equation}
and there exists   $ (x_{n})_{n} \subset \mathbb{R} $ satisfying 
\begin{equation}\label{eq 23}
(\phi_{1}^{n})_{*}(x_{n}+a)-(\phi_{1}^{n})^{*} (x_{n}-a) \le \delta_{n} \to 0 \quad{\rm or}\quad (\phi_{2}^{n})_{*}(x_{n}+a)-(\phi_{2}^{n})^{*} (x_{n}-a) \le \delta_{n} \to 0
\end{equation}
and $ \alpha_{1}^{n} \in [(\phi_{1}^{n})_{*}(x_{n}), (\phi_{1}^{n})^{*}(x_{n})]$ such that
\begin{equation}\label{eq 24}
  dist\,(\alpha_{1}^{n},\{0,b\}+\mathbb{Z}) \ge  \varepsilon > 0
\end{equation}
or $ \alpha_{2}^{n} \in [(\phi_{2}^{n})_{*}(x_{n}), (\phi_{2}^{n})^{*}(x_{n})]$ such that
\begin{equation}\label{eq 25}
dist\,(\alpha_{2}^{n},\{0,b\}+\mathbb{Z}) \ge  \varepsilon > 0.
\end{equation}\medskip

\noindent Up to translate the profile, we assume that
\begin{equation}\label{eq 26}
\left\lbrace
\begin{array}{lcl}
 x_{n} \equiv 0 \\
\phi_{1}^{n}(0) \in [0,1) \quad \mbox{for\,\,all}\quad n . \\
\end{array}
\right.
\end{equation}

\paragraph{Step 2: Passing to limit $ n \to +\infty $.}
Using Lemma \ref{lem:1}, we deduce that there exists $(c,\phi_1,\phi_2)$
 such that, up to extract a subsequence, 
$c^n\to c$, $\phi_1^n\to\phi_1$ and $\phi_2^n\to\phi_2$ a.e. and $(c,\phi_1,\phi_2)$ is solution of
\begin{equation}\label{eq:07}\begin{cases}
  c \phi_{1}'(x)=\alpha_{0} (\phi_{2}(x)-\phi_{1}(x)) \\
  c \phi_{2}'(x)=2\,\,F((\phi_{1}(x+r_{i}))_{i=0,...,N})+\alpha_{0} (\phi_{1}(x)-\phi_{2}(x))\\
  \phi_1'\ge0,\; \phi_2'\ge 0
 \end{cases}
\end{equation}

\paragraph{Step 3: Getting a contradiction.} 
We pass to the limit in (\ref{eq 23}) with $ x_{n}=0 $. This implies that
\begin{equation}\label{eq 30}
(\phi_{1})_{*}(a) \le (\phi_{1})^{*} (-a)\quad {\rm or}\quad (\phi_{2})_{*}(a) \le (\phi_{2})^{*} (-a).
\end{equation}
Since $ \phi_{1}$ and $\phi_2$ are non-decreasing, we have $ \phi_{1}=K_{1}\,\mbox{on}\,[-a, a]$ or $ \phi_{2}=K_{1}\,\mbox{on}\,[-a, a]$.  Using Lemma \ref{lem:2} we then get that
$$\phi_{2}=\phi_{1}=K_{1}\quad\mbox{on}\quad[-a, a].$$
Using \eqref{eq:07}, we deduce that  for $ x=0 $ 
$$ 0=F((\phi_{1}(x+r_{i}))_{i=0,...,N}) =F((K_{1})_{i=0,...,N })=f(K_{1}).$$
Hence $ K_{1} \in \{0,b\}+ \mathbb{Z}$. Although, since $ \alpha_{1}^{n} \in [(\phi_{1}^{n})_{*}(0), (\phi_{1}^{n})^{*}(0)]$ then
 $ \alpha_{1}^{n} \to \alpha_{1} \in \{K_{1}\} .$
But, passing to the limit in (\ref{eq 24}), yields
$$
  dist\,(\alpha_{1},\{0,b\}+\mathbb{Z}) \ge  \varepsilon  > 0,
$$
which is a contradiction. 
Similarly, if we pass to the limit in (\ref{eq 25}), we then get 
$$
  dist\,(\alpha_{2},\{0,b\}+\mathbb{Z}) \ge  \varepsilon > 0,
$$
which is also a contradiction. 

\end{proof}

\subsection{Proof of Proposition \ref{pro 1}}
We are now able to give the proof of Proposition \ref{pro 1}.
\begin{proof}[Proof of Proposition \ref{pro 1}]~
\paragraph{Step 0: Introduction.}
Let $p>0$ and $(\phi_{p}^{1},\phi_{p}^{2})$  (given by (\ref{eq 11}))  be two non-decreasing functions  solution of 
\begin{equation}\label{eq:01}
\begin{cases}
  c_{p}\,(\phi_{p}^{1})^{'}(x) = \alpha_{0}(\phi_{p}^{2}(x)-\phi_{p}^{1}(x))  \\
c_{p}\,\,(\phi_{p}^{2})^{'}(x)=2\,F((\phi_{p}^{1}(x+r_{i}))_{i=0,...,N})+\alpha_{0}(\phi_{p}^{1}(x)-\phi_{p}^{2}(x)).
\end{cases}
\end{equation}
with 
\begin{equation}\label{eq:08}
 \phi_{p}^{1}\left(x+\dfrac{1}{p}\right)=1+\phi_{p}^{1}(x)\qquad{\rm and}\qquad
\phi_{p}^{2}\left(x+\dfrac{1}{p}\right)=1+\phi_{p}^{2}(x).
\end{equation}
Up to translate $\phi_{p}^{1} $, we assume that 
\begin{equation}\label{eq 31}
\left\lbrace
\begin{array}{lcl}
 (\phi_{p}^{1})_{*}(0) \le b \\
(\phi_{p}^{1})^{*}(0) \ge b .
\end{array}
\right.
\end{equation}
Our goal is to pass to the limit as $ p $ tends to zero. 

\paragraph{Step 1: Passing to the limit $ p \to 0$.}
We want to apply Lemma \ref{lem:1}. The only thing we have to show is that $\phi_p^1(0)$ is bounded.
From (\ref{eq:08}) and (\ref{eq 31}), we deduce that 
$$ b-1 \le \phi_{p}^{1}\left(-\dfrac{1}{2 p}\right ) \le (\phi_{p}^{1})_{*}(0) \le b \le (\phi_{p}^{1})^{*}( 0)
\le \phi_{p}^{1}\left(\dfrac{1}{2 p }\right) \le b+1. $$
Thus 
$$
b-1 \le  \phi_{p}^{1}(0)\le  b+1. \\
$$
Using  Lemma \ref{lem:1}, we then deduce that there exists $(c,\phi_1,\phi_2)$
 such that, up to extract a subsequence, 
$c_p\to c$, $\phi_p^1\to\phi_1$ and $\phi_p^2\to\phi_2$ a.e. and $(c,\phi_1,\phi_2)$ is solution of
\begin{equation}\label{eq 37}\begin{cases}
  c \phi_{1}'(x)=\alpha_{0} (\phi_{2}(x)-\phi_{1}(x)) \\
  c \phi_{2}'(x)=2\,\,F((\phi_{1}(x+r_{i}))_{i=0,...,N})+\alpha_{0} (\phi_{1}(x)-\phi_{2}(x))\\
  \phi_1'\ge0,\; \phi_2'\ge 0.
 \end{cases}
\end{equation}
We also note that $(\phi_1)_*(0)\le b$ and $(\phi_1)^*(0)\ge b$.

\paragraph{Step 2 : Properties of the limit  $(\phi^{1}, \phi^{2})$.}
\paragraph{Step 2.1: The oscillation of  $(\phi^{1}, \phi^{2})$ is bounded.}
Let $R>0$. For every  $p$ such that $ R \le \dfrac{1}{2 p}$ we have
$$ \phi^1_{p}( R)- \phi^1_{p}( -R) \le \phi^1_{p}\left(\dfrac{1}{2 p}\right)- \phi^1_{p}\left( \dfrac{-1}{2 p}\right)=1.$$
Passing to the limit as $p\to 0$, we get
$$ \phi^{1}( R)- \phi^{1}(-R) \le 1.$$
Sending $R\to +\infty$, we deduce that
$$ \phi^{1}( +\infty)- \phi^{1}(-\infty) \le 1.$$
We get in the same way that
$$ \phi^{2}( +\infty)- \phi^{2}( -\infty) \le 1. $$

\paragraph{Step 2.2: $ \phi^{1}(\pm \infty)\in \mathbb{Z} \cup ({b} \pm \mathbb{Z})\,\, and \,\,\phi^{2}(\pm \infty)=\phi^{1}(\pm \infty)$.}
We define
$$\phi^{1}_{n}(x)=\phi^{1}(x-n), \quad {\rm and} \quad \phi^{2}_{n}(x)=\phi^{2}(x-n) .$$
Since (\ref{eq 37}) is invariant by translation, we get that $(\phi^1_n,\phi^2_n)$ is still solution of \eqref{eq 37}. Moreover, since $(\phi^{1},\phi^{2})$ is non-decreasing and bounded,  $(\phi^{1}_{n},\phi^{2}_{n}) $ is also non-decreasing and bounded. Thus
$ (\phi^{1}_{n},\phi^{2}_{n})$ converges as $n\to +\infty $ and we denote by $(\phi^1(-\infty),\phi^2(-\infty))$ its limit. By stability of viscosity solution, we then get that
$$\begin{cases}
 0 = \alpha_{0}(\phi^{2}(-\infty)-\phi^{1}(-\infty)) \\
 0 = 2\, F(((\phi^{1}(-\infty))_{i=0,...,N})+ \alpha_{0}(\phi^{1}(-\infty)-\phi^{2}(-\infty)) .\\
\end{cases}$$
The first equation implies that $\phi^1(-\infty)=\phi^2(-\infty)$ while the second implies that $ f(\phi_{1}(-\infty))=0$ and so $\phi_{1}(-\infty) \in  \mathbb{Z} \cup ({b}+\mathbb{Z})$.

\noindent In the same way, we get  $ \phi_{1}(+\infty) \in  \mathbb{Z} \cup ({b}+\mathbb{Z})$ and $\phi^{1}(+\infty)=\phi^{2}(+\infty) $.

\paragraph{Step 3: $ \phi^{1}(\pm \infty) \notin \{b\} + \mathbb{Z} $.}
Since
$$ \phi^{1}(+\infty)- \phi^{1}(-\infty) \le  1 \quad{\rm and}\quad \begin{cases}
  (\phi^{1})_{*}(0)\le b \\
(\phi^{1})^{*}(0) \ge b,
\end{cases}$$
we obtain that $ \phi^{1}(-\infty) \in \{b-1,0,b\} $ and  $ \phi^{1}(+\infty) \in \{b,1,b+1\} $. If $\phi^{1}(+\infty)=b+1$ then $ \phi^{1}(-\infty)=b $ and if
$\phi^{1}(-\infty)=b-1 $ then $ \phi^{1}(+\infty)= b $. Thus, it is sufficient to exclude the cases $ \phi^{1}(\pm \infty)=b $. At the end, this will prove that
 that $\phi^{1}(+\infty)=1 $ and $\phi^{1}(-\infty)=0  $ (and so by step 2.2, $\phi^{2}(+\infty)=1 $ and $\phi^{2}(-\infty)=0$).
 
 \noindent By contradiction, we assume that
$$ \phi^{1}(+\infty)=b. $$ (the case $\phi^{1}(-\infty)=b$ being similar). Let $ x_{0}= 2 \,r^{*} $, where $ r^{*}=\displaystyle \max_{i=0,...,N}  \left| r_{i} \right| $. 
Since 
$$b=\phi^{1}(+\infty) \ge (\phi^{1})^{*}(0) \ge  b $$
then $\phi^{1}(x)=b$  for all $ x>0 $.
Hence 
$$     \phi^{1}(x_{0})=\phi^{1}( x_{0} \pm a )=b 
$$
for $ r^{*} < a < \,2\,r^{*}  $. 

\paragraph{Step 3.1: Introduce $ z_{p} $ and $ y_{p} $.}
For any $ \varepsilon > 0  $  small enough  ($\varepsilon <\min (b,1-b)/2$), let $ z_{p}\,\,\mbox{and}\,\, y_{p} \in \mathbb{R}$ such that 
\begin{equation}\label{eq 32}
\left\lbrace
\begin{array}{lcl}
\vs(\phi_{p}^{1})_{*}(z_{p}) \le  b+ \varepsilon \\
(\phi_{p}^{1})^{*}(z_{p}) \ge  b+  \varepsilon 
\end{array}
\right.
\quad {\rm and } \quad
\left\lbrace
\begin{array}{lcl}
\vs(\phi_{p}^{1})_{*}(y_{p}) \le b - \varepsilon \\
(\phi_{p}^{1})^{*}(y_{p}) \ge b-  \varepsilon .
\end{array}
\right.
\end{equation}
Let
 $$ \psi_{p}^{1}(x)=(\phi_{p}^{1})_{*} (x+a)-(\phi_{p}^{1})^{*}(x-a )$$ 
$$ \psi_{p}^{2}(x)=(\phi_{p}^{2})_{*}(x+a )-(\phi_{p}^{2})^{*}(x-a ). $$
Note that  $(\psi_{p}^{1},\psi_{p}^{2})$  is lower semi-continuous and solution of
\begin{equation}\label{eq 36}
\left\lbrace
\begin{array}{rcl}
  c_{p} (\psi_{p}^{1})^{'}(x) &\ge& \alpha_{0}(\psi_{p}^{2}(x)- \psi_{p}^{1}(x)) \\
  c_{p} (\psi_{p}^{2})^{'}(x) &\ge&  2\,( F(((\phi_{p}^{1})_{*}(x+a +r_{i}))_{i=0,...,N})-F(((\phi_{p}^{1})^{*}(x-a+r_{i}))_{i=0,...,N})) \\
&&+\alpha_{0}(\psi_{p}^{1}(x)-\psi_{p}^{2}(x)).
 \end{array}
\right.
\end{equation}
Moreover, we have 
 $$ b+\varepsilon \in [(\phi_{p}^{1})_{*}(z_{p}), (\phi_{p}^{1})^{*}(z_{p})] \quad \mbox{ such that} \quad \mbox{dist}\,\,(b+\varepsilon ,\{0, b\}+\mathbb{Z}) \ge \varepsilon .$$ 
Then there exists $ \delta (\varepsilon ) $ (given by Proposition \ref{pro 3}) independent of $ p $ such that (for $ a > r^{*} $)
\begin{equation}\label{eq 34}
 \psi_1(z_p) \ge \delta (\varepsilon ) > 0\quad {\rm and}\quad  \psi_2(z_p) \ge \delta (\varepsilon ) > 0.
\end{equation}
Similarly, we get that 
\begin{equation}\label{eq 35}
 \psi_1(y_p) \ge \delta (\varepsilon ) > 0\quad{\rm and}\quad   \psi_2(y_p) \ge \delta (\varepsilon ) > 0.
\end{equation}

\noindent Using the uniform convergence of $ \phi_{p}^{1}\,\,\mbox{to}\,\,\phi^{1}  $ (see the second part of Lemma \ref{lem 4} if $ c=0 $), we also get that 
$$ \phi_{p}^{1}(x_{0}) \to b $$
and 
 $$
     \psi_{p}^{1}(x_{0})=(\phi_{p}^{1})_{*}(x_{0}+a)-(\phi_{p}^{1})^{*}(x_{0}-a) \to 0 \quad \mbox{as} \quad p\to 0.
     $$

\paragraph{Step 3.2: Equation satisfied by $(\psi^{1}, \psi^{2})$ at its point of minimum.}
Since 
$$\begin{cases}
z_{p} \to +\infty \,\,\mbox{as}\,\, p \to +\infty \\
y_{p}\le 0, 
\end{cases}$$
we have $ x_{0} \in [y_{p}, z_{p}] $ for $ p $ small enough.
We define	
\begin{align*}
&m^{1}_{p}=\min_{x \in [y_{p}, z_{p}]} \psi_{p}^{1}(x)=  \psi_{p}^{1}(x_{p}^{1}) \ge 0 \quad \mbox{with} \quad x_{p}^{1} \in [y_{p}, z_{p}]\\
&m^{2}_{p}=\min_{x \in  [y_{p}, z_{p}]} \psi_{p}^{2}(x)=  \psi_{p}^{2}(x_{p}^{2}) \ge  0 \quad \mbox{with} \quad  x_{p}^{2} \in [y_{p}, z_{p}].
\end{align*}
Note that
\begin{equation}\label{eq 40}
m^{1}_{p}=\psi_{p}^{1} (x_{p}^{1}) \le \psi_{p}^{1}(x_{0}) \to 0 \quad \mbox{as} \quad p \to 0.\\
\end{equation}
Since, by (\ref{eq 34}) and (\ref{eq 35}),
$$
\psi_{p}^{1}( y_{p}) \ge \delta(\varepsilon) > 0, \quad
\psi_{p}^{1}(z_{p}) \ge \delta(\varepsilon) > 0, \quad 
$$
we have 
\begin{equation*}
 x_{p}^{1} \in (y_{p}, z_{p})  
\end{equation*}
and from (\ref{eq 36}), we get that 
$$0=c_{p} (\psi_{p}^{1})^{'}(x_{p}^{1}) \ge \alpha_{0}(\psi_{p}^{2}(x_{p}^{1})- \psi_{p}^{1}(x_{p}^{1})).$$
This gives that 
$$m_p^1=\psi_p^1(x^1_p)\ge \psi^2_p(x^1_p)\ge m^2_p\ge0.$$
Thus 
$$m^2_p\to 0\quad {\rm as}\quad p\to 0.$$
Using that 
$$
\psi_{p}^{2}( y_{p}) \ge \delta(\varepsilon) > 0 \quad{\rm and}\quad 
\psi_{p}^{2}(z_{p}) \ge \delta(\varepsilon) > 0, 
$$
we then get
\begin{equation}\label{eq 41}
x_{p}^{2} \in (y_{p}, z_{p}).  
\end{equation}
Then by \eqref{eq 36}, we have
\begin{equation}\label{eq 42}
\left\lbrace
\begin{array}{rcl}
 0=c_{p} (\psi_{p}^{1})^{'}(x_{p}^{1}) &\ge& \alpha_{0}(\psi_{p}^{2}(x_{p}^{1})- \psi_{p}^{1}(x_{p}^{1})) \\
 0=c_{p} (\psi_{p}^{2})^{'}(x_{p}^{2}) &\ge& 2(\, F(((\phi_{p}^{1})^{*}(x_{p}^{2}+a+r_{i}))_{i=0,...,N})- 
F(((\phi_{p}^{1})^{*}(x_{p}^{2}-a+r_{i}))_{i=0,...,N})) \\
&&+\alpha_{0}(\psi_{p}^{1}(x_{p}^{2})- \psi_{p}^{2}(x_{p}^{2})).
 \end{array}
\right.
\end{equation}
Using that 
$$\a_0(\psi_{p}^{1}(x_{p}^{2})- \psi_{p}^{2}(x_{p}^{2}))\ge \alpha_0 (\psi_{p}^{1}(x_{p}^{1})- \psi_{p}^{2}(x_{p}^{1}))\ge 0,$$
we deduce that
\begin{equation}\label{eq:10}
0\ge F(((\phi_{p}^{1})^{*}(x_{p}^{2}+a+r_{i}))_{i=0,...,N})- 
F(((\phi_{p}^{1})^{*}(x_{p}^{2}-a+r_{i}))_{i=0,...,N}).
\end{equation}

\paragraph{Step 3.3: $ \psi_{p}^{1}(x_{p}^{2}+r_{i}) \ge \psi_{p}^{1} (x_{p}^{1})=m_{p}^{1} $ for all $i$.}
Using (\ref{eq 41}) we get
\begin{equation}\label{eq 43}
b-\varepsilon \le (\phi^{1}_{p})^{*}( y_{p}) \le \phi^{1}_{p}(x_{p}^{2}) \le (\phi^{1}_{p})_{*}((z_{p}) \le  b+\varepsilon. 
\end{equation}
Using Lemma \ref{lem:1}, we deduce that there exists $\phi^1_0$ and $\phi^2_0$ such that
$$ \phi_{p}^{1}(x_{p}^{2}+\cdot) \to   \phi_{0}^{1}\quad{\rm and}\quad \phi_{p}^{2}(x_{p}^{2}+\cdot) \to   \phi_{0}^{2} \quad \mbox{a.e.\,on}\quad \mathbb{R} $$
and $(\phi^1_0,\phi^2_0)$ is solution of 
\begin{equation}\label{eq:02}
\left\lbrace
\begin{array}{lcl}
 c\,\,(\phi_0^{1})'(z) = \alpha_{0}(\phi_0^{2}(z)-\phi_0^{1}(z)) \\
c\,\,(\phi_0^{2})'(z)=2\,F((\phi_0^{1}(z+r_{i}))_{i=0,...,N})+\alpha_{0}(\phi_0^{1}(z)-\phi_0^{2}(z)).\\
\end{array}
\right.
\end{equation}
Using that 
\begin{equation}\label{eq 44}
0\le (\phi_{p}^{2})_{*}(x_{p}^{2}+a)-(\phi_{p}^{2})^{*}(x_{p}^{2}-a)=\psi_{p}^{2}(x_{p}^{2})\le \psi_{p}^{2}(x_{p}^{1})\le\psi_{p}^{1}(x_{p}^{1})=m_{p}^{1}\to 0\quad\mbox{as}\,\,p \to 0
\end{equation}
we deduce that $\phi_{0}^{2}=K_{1}$ on $(-a, a)$. Using Lemma \ref{lem:2}, we deduce that  $\phi_{0}^{1}=K_{1}$ on $(-a, a)$ with $K_1\in (b-\e,b+\e)$ (by  (\ref{eq 43})). 

The second equation of \eqref{eq:02} implies that $f(K_1)=0$, which gives $K_1=b$.
We then deduce (using the uniform Lipschitz continuity or Helly's Lemma in the case $c=0$), that 
$$ \sup_{(x_{p}^{2}-a+\delta, x_{p}^{2}+a-\delta)}  \left |   \phi_{p}^{1}(x)-b  \right | \to 0 \,\,\, \mbox{for\,\,all}\,\, \delta>0, $$
which implies
\begin{equation}\label{eq 46}
(\phi_{p}^{1})_{*}(x_{p}^{2}+a -\delta), (\phi_{p}^{1})^{*}(x_{p}^{2}-a+\delta) \to b\,\,\,\mbox{as}\,\,p \to 0
\end{equation}
Using \eqref{eq 32}, we deduce, taking $\delta$ small enough ($ 0< \delta \le a-r^{*}$), that
$$ y_{p}\le x_{p}^{2}+r_{i} \le z_{p} \,\,\mbox{ for all }\,\,i$$
which gives that
 \begin{equation}\label{eq 47}
\psi_{p}^{1}(x_{p}^{2}+r_{i}) \ge \psi_{p}^{1}(x_{p}^{1})=m_{p}^{1} . 
\end{equation}

\paragraph{Step 3.4: Getting a contradiction.} 
In this step, we assume that  $ m_{p}^{1} >0 $ and we want to get a contradiction.
Set
$$ k_{i}= \begin{cases}
(\phi_{p}^{1})_{*}(x_{p}^{2}+r_{i}+a)\,\,\,\mbox{if}\,\,r_{i} \le 0 \\
(\phi_{p}^{1})^{*}(x_{p}^{2}+r_{i}-a )\,\,\,\mbox{if}\,\,r_{i} > 0 
\end{cases}$$
and note that
$$ k_{i} \in [(\phi_{p}^{1})^{*}(x_{p}^{2}-a), (\phi_{p}^{1})_{*}(x_{p}^{2}+a)]$$
which implies that $k_i\to b$ as $p\to 0$ (by (\ref{eq 46})).

\noindent Hence from  (\ref{eq:10}),(\ref{eq 47})  and using the monotonicity of $ F $, we get
$$ 0 \ge F((a_{i})_{i=0,...,N})-F((c_{i})_{i=0,...,N})$$
where 
$$a_{i}= 
\left\{\begin{array}{lll}
k_{i}&\mbox{if}&r_{i} \le 0\\
 k_{i}+m_{p}^{1}&\mbox{if}& r_{i} > 0         
\end{array}
\right.$$
and 
$$c_{i}=
\left\{
\begin{array}{lll}
k_{i}-m_{p}^{1}&\mbox{if}&r_{i} \le 0\\
 k_{i}&\mbox{if}& r_{i} > 0.       
\end{array}
\right.
$$

\noindent Therefore from the fact that $k_i\to b$ and $ m_{p}^{1} \to 0 $, we deduce that
$$ a_{i} \to b   \quad \mbox{and} \quad  c_{i} \to b  \quad \mbox{as} \quad p \to 0 .$$
Since F is $C^{1}$ near $ \{b\}^{n+1}$ and $ c_{i}+t\,\,(a_{i}-c_{i})=c_{i}+t\,m_{p}^{1}$, we have
\begin{align*}
 0 \ge& \int_{0}^{1} dt\,\,\, \sum_{i=0}^{N}(a_{i}-c_{i})\left( \dfrac{\partial F}{\partial X_{i}}(c_{j}+t(a_{j}-c_{j})_{j=0,...,N})\right)\\
=&\,\, \int_{0}^{1} dt\,\,\,  \sum_{i=0}^{N}m_{p}^{1}\left(  \dfrac{\partial F}{\partial X_{i}}( (c_{j}+t\,\,m_{p}^{1})_{j=0,...,N})\right).
\end{align*}
Using that $m_{p}^{1} > 0$, we get 
\begin{align*}
0 \ge&  \int_{0}^{1}dt\,\,\, \sum_{i=0}^{N} \dfrac{\partial F}{\partial X_{i}}( (c_{j}+t\,\,m_{p}^{1})_{j=0,...,N})\\
=&\,f'(b)+ \int_{0}^{1} dt\,\,\,\left( \sum_{i=0}^{N}\dfrac{\partial F}{\partial X_{i}}((c_{j}+t\,\,m_{p}^{1})_{j=0,...,N})
-\sum_{i=0}^{N} \dfrac{\partial F}{\partial X_{i}}(b,...,b)\right) .
\end{align*}
But $F$ is $ C^{1} $ near $\{b\}^{N+1}$ and $c_{i}+t\,m_{p}^{1} \to b $ for all $ i$, thus
$$ \int_{0}^{1} dt\,\,\,\left( \sum_{i=0}^{N}\dfrac{\partial F}{\partial X_{i}}((c_{j}+t\,\,m_{p}^{1})_{j=0,...,N})-
\sum_{i=0}^{N} \dfrac{\partial F}{\partial X_{i}}(b,...,b)\right) \to 0 \quad \mbox{as} \quad p \to 0 .$$
This implies that 
$$ 0 \ge f'( b) >  0 $$ 
which is a contradiction with assumption (B). \medskip

\paragraph{Step 5: $ m_{p}^{1}>0$.}
We split this step into two cases:

\paragraph{Case 1: $ F $ is strongly increasing in some direction.}
We assume that F satisfies 
\begin{equation}\label{eq 48}
 \dfrac{\partial F }{\partial X_{i_{1}}} \ge \delta_{0} > 0 .
\end{equation} 
We assume by contradiction that $ m_{p}^{1}=0$. Thus
$$ \psi_{p}^{1}(x_{p}^{1})=(\phi_{p}^{1})_{*}(x_{p}^{1}+a)-(\phi_{p}^{1})^{*}(x_{p}^{1}-a)=0. $$
Since $ \phi_{p}^{1} $  is non-decreasing, we get
$$ \phi_{p\lvert _{ (x_{p}^{1}-a, x_{p}^{1}+a)}}^{1} =\phi_{p}^{1}(x_{p}^{1})=b .$$
The first equation of \eqref{eq:01} implies that
$$ \phi_{p\lvert _{ (x_{p}^{1}-a, x_{p}^{1}+a)}}^{2} =b .$$
Let  $ d_{1}\ge x_{p}^{1}+a $  be the first real number such that 
$$ \phi_{p}^{1}(d_{1}+\eta_{1})>b\,\,\mbox{for\,\,every}\,\,\eta_{1}>0.$$
We choose $ 0<\eta_{1}<r_{i_{1}}$ and set
$$ x_{1}= d_{1}+\eta_{1}-r_{i_{1}}. $$ 
From the definition of  $ d_{1}$, we deduce that 
$$\phi_{p}^{1}= \phi_{p}^{2}=b \textrm{ on a neighborhood of }x_{1}.$$ 
hence $ (\phi^{2}_{p})^{'}(x_{1})=0 $. Moreover, we have 
$$\begin{cases}
 \phi_{p}^{1}(x_{1}+r_{i}) \ge b\,\,\,\mbox{for\,\,all}\,\,i\neq i_{1} \\
\phi_{p}^{1}(x_{1}+r_{i_{1}})=\phi_{p}^{1}(d_{1}+\eta_{1})>b \,\,\mbox{for}\,\,i=i_{1} .\\
\end{cases}$$
Thus, the second equation of \eqref{eq:01} implies that
\begin{align*}
0=c\,(\phi_{p}^{2})^{'}(x_{1})=&2F((\phi_{p}^{1}(x_{1}+r_{i}))_{i=0,....,N})\\
\ge&2 F(b,...,\phi_{p}^{1}(x_{1}+r_{i_{1}}),....,b)\\
\ge &f(b)+\delta_{0}(\phi_{p}^{1}(d_{1}+\eta_{1})-b)\\
=&\delta_{0}(\phi_{p}^{1}(d_{1}+\eta_{1})-b)>0 .
\end{align*}
This is a contradiction. \medskip

\paragraph{Case 2: Create the monotonicity.}
In fact, we can always assume (\ref{eq 48}) for a modification $ F_{p} $ of  $F$, where
$$F_{p}(X_{0},X_{1},...,X_{N})=F(X_{0},X_{1},...,X_{N})+p(X_{i_{1}}-X_{0}).$$
Then the whole construction works for $F$ replaced by $F_{p}$
with the additional monotonicity property (\ref{eq 48}) with $\delta_{0}=p.$
Once we pass to the limit $p\to 0,$ we still get the
same contradiction as in Step $3.4
$ and we recuperate the construction
of traveling wave $(\phi_1,\phi_2)$ of (\ref{eq 4}) for the function $F.$

\end{proof}
\section{ Uniqueness of the velocity $ c $ }\label{sec:4}
In the first subsection, we prove a comparison principle on $(-\infty, r^{*}])$ and then  another one on  $[-r^{*}, +\infty))$. 
These two comparison principles will be used in the second subsection to prove the uniqueness of the velocity.
\subsection{ Comparison principle on the half-line }
\begin{theo}[Comparison principle on $(-\infty, r^{*}{]}$]\label{th 2}
Let $ F: [0,1]^{N+1} \to \mathbb{R} $ satisfying $ (A) $ and assume that 
\begin{equation}\label{eq 49}
\left\lbrace
\begin{array}{lcl}
 \textrm{there exists }\beta_{0}>0 \textrm{ such that if}:\\
   Y=(Y_{0},....,Y_{N}), Y+(a....,a) \in [0, \beta_{0}^{N+1}] \\
  \mbox{then}\,\,
F(Y+(a,...,a)) < F(Y)\,\,\mbox{if}\,\,a >0.
\end{array}
\right.
\end{equation}
 Let $ (u_{1}, u_{2}) $ and $ (v_{1}, v_{2}): (-\infty, r^{*}]^{2} \to [0,1]^{2} $  be respectively a sub and a super-solution of
\begin{equation}\label{eq 50}
\left\lbrace
\begin{array}{lcl}
c\,\,\,u_{1}'(z) = \alpha_{0}(u_{2}(z)-u_{1}(z))&{on}&(-\infty,0) \\
c \,\,\,u_{2}'(z)=2\,F((u_{1}(z+r_{i}))_{i=0,...,N})+\alpha_{0}(u_{1}(z)-u_{2}(z))&\mbox{on} &(-\infty,0)
 \end{array}
\right.
\end{equation}
We suppose that 
 $$ \begin{cases}
 u_{1} \le \beta_{0}\,\,\mbox{on}\,\,(-\infty, 2r^{*}]\\
 u_{2} \le \beta_{0}\,\,\mbox{on}\,\,(-\infty, 2r^{*}]\\
\end{cases}\quad
{\rm and}\quad 
 \begin{cases}
   u_{1} \le v_{1}\,\,\mbox{on}\,\,[0, r^{*}] \\
   u_{2} \le v_{2}\,\,\mbox{on}\,\,[0, r^{*}].\\
\end{cases}$$
Then 
$$ \begin{cases}
   u_{1} \le v_{1}\,\,\mbox{on}\,\,(-\infty, r^{*}] \\
   u_{2} \le v_{2}\,\,\mbox{on}\,\,(-\infty, r^{*}]. \\
\end{cases}$$
\end{theo}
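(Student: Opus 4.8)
The plan is to argue by contradiction. Set $M:=\sup_{(-\infty,r^*]}\max(u_1-v_1,u_2-v_2)$ and suppose $M>0$. Since $u_2\le\beta_0$ and $v_2\ge0$ we get for free that $M\le\beta_0$, and since $u_i\le v_i$ on $[0,r^*]$ the supremum is attained (if at all) in $(-\infty,0)$. The domain being unbounded, the first — and largely technical — step is to reach a situation where the supremum is actually attained: I would replace $(v_1,v_2)$ by the perturbed super-solution $(v_1+\eta\zeta,v_2+\eta\zeta)$, where $\eta>0$ is small and $\zeta\ge0$ tends to $+\infty$ as $z\to-\infty$ and is tuned so that $(v_1+\eta\zeta,v_2+\eta\zeta)$ remains a super-solution of \eqref{eq 50} on $(-\infty,0)$ (this is where the nonlocal term and the sign of $c$ force some care in the choice of $\zeta$) while keeping $u_i\le v_i+\eta\zeta$ on $[0,r^*]$. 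Then $u_i-v_i-\eta\zeta\to-\infty$ as $z\to-\infty$, so $\max_i(u_i-v_i-\eta\zeta)$ attains a positive maximum at some $z_0=z_0^\eta\in(-\infty,0)$; one carries out the local analysis below at $z_0$ and lets $\eta\to0$ at the end. (When $c\ne0$ one may instead translate along a maximizing sequence and pass to the limit using the Lipschitz bounds implied by the equations, and the degenerate case $c=0$ can be treated through Lemma \ref{lem 6}.)

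For the local analysis I would use the two equations at $z_0$ via the doubling-of-variables method (maximizing $u_k(z)-v_k(y)-|z-y|^2/(2\varepsilon)$ near $(z_0,z_0)$, so that the quadratic-penalty derivatives agree at the optimal $(z_\varepsilon,y_\varepsilon)$ and subtracting the sub- and super-solution inequalities eliminates the $c$-terms). Applied to the \emph{first} equation, which is of transport type, this forces $u_2-v_2$ (rather than $u_1-v_1$) to realize $M$ at $z_0$, i.e. $u_2(z_0)-v_2(z_0)=M$; so we may assume the maximum is realized by the second component. Applied then to the \emph{second} equation at $z_0$ and passing to the limit (using that $u_k$ is upper and $v_k$ lower semi-continuous, and $u_2(z_\varepsilon)-v_2(y_\varepsilon)\to M$), one obtains
\begin{equation*}
2\bigl[F((a_i)_{i})-F((b_i)_{i})\bigr]+\alpha_0(a_0-b_0)\ \ge\ \alpha_0 M,
\end{equation*}
where $a_i$ and $b_i$ are the (subsequential) limits of $u_1(z_\varepsilon+r_i)$ and $v_1(y_\varepsilon+r_i)$, with $0\le a_i\le\beta_0$, $b_i\ge0$ and $a_i-b_i\le M$ for every index $i$ (including $i=0$). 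Using the monotonicity of $F$ in the variables $X_i$, $i\ne0$, to replace each $a_i$ ($i\ge1$) by $b_i+M$, and then the strict condition \eqref{eq:monV0} — which makes $t\mapsto 2F(t,(b_i+M)_{i\ge1})+\alpha_0 t$ increasing — to replace $a_0$ by $b_0+M$, the terms $\alpha_0 M$ cancel and one is left with $F\bigl((b_i)_i+(M,\dots,M)\bigr)\ge F\bigl((b_i)_i\bigr)$ with $M>0$, contradicting the inverse-monotonicity hypothesis \eqref{eq 49}.

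The hard part is not the doubling argument but rather two points of bookkeeping. First, the non-compactness of $(-\infty,r^*]$: with no control on $(u_i,v_i)$ at $-\infty$, the auxiliary function $\zeta$ has to be chosen so that its growth beats the nonlocal contribution $F((\,\cdot+r_i))$ in the super-solution inequality, which is delicate when the $r_i$ have mixed signs (here one may have to fall back on translations plus stability of viscosity solutions rather than a direct perturbation). Second, and more essential, one must guarantee that the points $(b_i)_i$ and $(b_i)_i+(M,\dots,M)$ at which \eqref{eq 49} is applied really lie in $[0,\beta_0]^{N+1}$; this is exactly where the smallness hypotheses $u_1,u_2\le\beta_0$ and the resulting bound $M\le\beta_0$ enter, together with the monotonicity of $F$ — if needed after first reducing to the case $v_i\le\beta_0$ (replacing $v_i$ by $\min(v_i,\beta_0)$, which changes nothing since $u_i\le\beta_0$) and shifting by $s:=\max\{0,\max_i(a_i-b_i)\}\le M$ in place of $M$.
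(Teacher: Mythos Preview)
Your overall architecture matches the paper's: reduce to $v_i\le\beta_0$ by replacing $v_i$ with $\min(v_i,\beta_0)$, assume $M>0$, localize the supremum, run doubling of variables, use the first equation to transfer the maximum to the second component, and finish with the inverse monotonicity \eqref{eq 49} after adjusting coordinates so that both vectors lie in $[0,\beta_0]^{N+1}$. The local analysis and the final contradiction are essentially those of the paper.

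The point where your route diverges, and where there is a genuine gap, is the localization step. You propose to perturb the super-solution to $(v_1+\eta\zeta,\,v_2+\eta\zeta)$ with $\zeta\to+\infty$ at $-\infty$. But for this pair to remain a super-solution of the first equation you need $c\,\eta\,\zeta'(z)\ge 0$; when $c>0$ this forces $\zeta'\ge 0$, which is incompatible with $\zeta\to+\infty$ as $z\to-\infty$. Your fallback (``translations plus stability'') is not fleshed out and would need its own argument. The paper avoids this obstruction entirely by not perturbing the super-solution at all: it inserts a \emph{linear} penalty directly into the doubled functional,
\[
\psi_k(x,y)=u_k(x)-v_k(y)-\frac{|x-y|^2}{2\varepsilon}+\alpha x,
\]
so that $\psi_k\to-\infty$ as $x\to-\infty$ and maxima exist. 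The only price is an extra $c\alpha$ on the left of the viscosity inequalities, which is harmless in the limit $\alpha\to 0$. This trick works uniformly in the sign of $c$ and requires no structure of $\zeta$ versus the nonlocal term.

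On the final bookkeeping, your instinct is right but the one-line fix (``shift by $s=\max_i(a_i-b_i)$ in place of $M$'') is not quite enough: you still need both $(\bar b_i)$ and $(\bar b_i+M)$ in $[0,\beta_0]^{N+1}$, and a uniform shift can push some coordinates out of range. The paper handles this by a short coordinate-by-coordinate case analysis, choosing for each $i$ nonnegative $l_i,l_i'$ with $a_i+l_i=b_i-l_i'+M$, $a_i+l_i\le\beta_0$ and $b_i-l_i'\ge 0$; monotonicity of $F$ in each $X_i$ then yields $F((\bar a_i))-F((\bar b_i))\ge F((a_i))-F((b_i))$ with $\bar a_i-\bar b_i=M$ for all $i$, and \eqref{eq 49} applies cleanly.
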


\begin{cor}[Comparison principle on ${[}-r^{*}, +\infty)$]\label{cor 2} 
Let $ F: [0,1]^{N+1} \to \mathbb{R} $ satisfying $ (A) $ and assume that
\begin{equation}\label{eq 51}
\left\lbrace
\begin{array}{lcl}
  \textrm{there exists }\beta_{0} > 0 \textrm{ such that if}:\\
   Y=(Y_{0},....,Y_{N}), Y+(a,....,a) \in [1-\beta_{0},1]^{N+1} \\
  \mbox{then}\,\,
F(Y+(a,...,a)) < F(Y)\,\,\mbox{if}\,\,a >0.
\end{array}
\right.
\end{equation}
Let $ (u_{1}, u_{2}) $ and $ (v_{1}, v_{2}): [-r^{*}, +\infty)^{2} \to [0,1]^{2} $ be respectively a sub 
and a super-solution of (\ref{eq 50})   on $(0,+\infty)$.
We assume that 
$$ \begin{cases}
 v_{1} \ge 1-\beta_{0}\,\,\mbox{on}\,\,[-2r^{*},+\infty) \\
 v_{2} \ge 1-\beta_{0}\,\,\mbox{on}\,\,[-2r^{*},+\infty)
 \end{cases} 
 \quad
{\rm and}\quad 
\begin{cases}
   u_{1} \le v_{1}\,\,\mbox{on}\,\,[-r^{*},0] \\
   u_{2} \le v_{2}\,\,\mbox{on}\,\,[-r^{*},0].
\end{cases}$$
Then 
$$ \begin{cases}
   u_{1} \le v_{1}\,\,\mbox{on}\,\,[-r^{*}, +\infty] \\
   u_{2} \le v_{2}\,\,\mbox{on}\,\,[-r^{*}, +\infty] .\\
\end{cases}$$
 \end{cor}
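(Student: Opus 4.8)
The plan is to reduce Corollary \ref{cor 2} to Theorem \ref{th 2} by a symmetry/reflection argument, exploiting the fact that the two comparison principles are mirror images of one another: one concerns behavior near $\{0\}^{N+1}$ on a left half-line, the other behavior near $\{1\}^{N+1}$ on a right half-line. First I would introduce the reflected functions. Set $\hat u_i(z) = 1 - v_i(-z)$ and $\hat v_i(z) = 1 - u_i(-z)$ for $i=1,2$, which swaps sub- and super-solutions because the reflection $z \mapsto -z$ flips the sign of the derivative while the map $w \mapsto 1-w$ reverses order. Correspondingly, define the reflected nonlinearity $\hat F(X_0,\dots,X_N) = -F(1-X_{\sigma(0)},\dots,1-X_{\sigma(N)})$, where $\sigma$ re-indexes so that the shifts $r_i$ become $-r_i$ (since reflection sends a shift by $r_i$ to a shift by $-r_i$); one checks directly that $\hat F$ still satisfies $(A)$ — Lipschitz regularity and the monotonicity $2\,\partial \hat F/\partial X_0 + \alpha_0 > 0$ are preserved because composing with $w \mapsto 1-w$ in every slot and negating leaves the combination invariant — and that hypothesis \eqref{eq 51} for $F$ on $[1-\beta_0,1]^{N+1}$ becomes exactly hypothesis \eqref{eq 49} for $\hat F$ on $[0,\beta_0]^{N+1}$.

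Next I would verify that the reflected data fit the hypotheses of Theorem \ref{th 2}. The equations \eqref{eq 50} are invariant under this reflection (after replacing $F$ by $\hat F$ and $r_i$ by $-r_i$), so $(\hat u_1,\hat u_2)$ is a sub-solution and $(\hat v_1,\hat v_2)$ a super-solution on the reflected domain; the half-line $(0,+\infty)$ for the original becomes $(-\infty,0)$ for the reflected problem, and $[-r^*,+\infty)$ becomes $(-\infty,r^*]$, matching the domain in Theorem \ref{th 2} (note $r^*$ is unchanged since it is defined via $\max|r_i|$). The assumption $v_i \ge 1-\beta_0$ on $[-2r^*,+\infty)$ translates into $\hat u_i = 1 - v_i(-\cdot) \le \beta_0$ on $(-\infty,2r^*]$, which is precisely the smallness hypothesis of Theorem \ref{th 2}; and $u_i \le v_i$ on $[-r^*,0]$ translates into $\hat u_i \le \hat v_i$ on $[0,r^*]$, the required ordering on the overlap interval. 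Theorem \ref{th 2} then yields $\hat u_i \le \hat v_i$ on $(-\infty,r^*]$, and undoing the reflection gives $u_i \le v_i$ on $[-r^*,+\infty)$, which is the claim.

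The main obstacle I anticipate is purely bookkeeping rather than conceptual: one must be careful that the reflection genuinely turns the system into an instance of the system treated in Theorem \ref{th 2} — in particular that the coupling term $\alpha_0(u_2-u_1)$ and the factor $2F$ are untouched by the order-reversal once the sign of $\hat F$ is chosen as above, and that the re-indexing $\sigma$ sending $r_i \mapsto -r_i$ is legitimate (it is, since $F$ is only assumed monotone in $X_i$ for $i \ne 0$ and the set $\{r_i\}$ is arbitrary). A secondary point to check carefully is the treatment of the semi-continuity conventions in Definition \ref{def 1} under the reflection: $w \mapsto 1-w$ turns an upper semi-continuous function into a lower semi-continuous one, which is exactly why sub-solutions and super-solutions are exchanged, so the definitions remain consistent. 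Once these identifications are recorded, the corollary follows immediately from Theorem \ref{th 2} with no further estimation required.
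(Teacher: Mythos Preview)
Your proposal is correct and follows essentially the same reflection argument as the paper: the paper also sets $\hat u_i(x)=1-u_i(-x)$, $\hat v_i(x)=1-v_i(-x)$ (so that, in its labeling, $(\hat v_1,\hat v_2)$ is the sub-solution and $(\hat u_1,\hat u_2)$ the super-solution), invokes the transformation lemma (Lemma~\ref{lemma 8}) giving $\hat F(X)=-F(1-X)$, $\hat c=-c$, $\hat r_i=-r_i$, checks that \eqref{eq 51} becomes \eqref{eq 49}, and applies Theorem~\ref{th 2}. The only difference is cosmetic---you relabel so that the hatted sub/super roles match Theorem~\ref{th 2}'s notation directly, and your re-indexing permutation $\sigma$ is unnecessary since one can simply take $\hat r_i=-r_i$ with the same index set.
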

\begin{lem}[Transformation of a solution of (\ref{eq 50})]
\label{lemma 8} 
Let $ (u_{1}, u_{2}), (v_{1}, v_{2}): (-\infty, r^{*}]^{2} \to [0,1]^{2}$ be respectively a sub and a super-solution of (\ref{eq 50}).Then 
$$ \begin{cases}
  \hat{u}_{1}(x)=1-u_{1}(-x) \\
  \hat{u}_{2}(x)=1-u_{2}(-x) 
   \end{cases}
\,\,\mbox{and}\,\,
\begin{cases}
    \hat{v}_{1}(x)=1-v_{1}(-x) \\
    \hat{v}_{2}(x)=1-v_{2}(-x)
\end{cases}
$$
are respectively a super and a sub-solution of (\ref{eq 50})  on $ (0 +\infty) $ with $F,\; c,\; r_{i} $ (for all $ i \in \{0,...,N \} $)
 replaced by  $ \hat{F},\; \hat{c}$ and $\hat{r} $, given by
\begin{equation}\label{eq 52}
\left\lbrace
\begin{array}{lcl}
   \hat{F}(X_{0}, .... , X_{N})=-F(1-X_{0}, ...., 1-X_{N}) \\
   \hat{c}=-c \\
   \hat{r}_{i}=-r_{i}
\end{array}
\right.
\end{equation}
with $ \hat{F}:[0, 1]^{N+1} \to \mathbb{R} $ satisfying $(A)$, $(B)$ and $(C)$, where $ b $ and $ f $ are replaced by
$$ \begin{cases}
    \hat{b}=1-b \\
\hat{f}(v)=-f(1-v). 
   \end{cases}$$
\end{lem}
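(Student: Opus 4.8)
The plan is to establish the lemma in two independent parts. \emph{Part 1:} the reflected nonlinearity $\hat F(X)=-F(1-X_0,\dots,1-X_N)$ still satisfies $(A)$, $(B)$ and $(C)$, with $b,f$ replaced by $\hat b=1-b$ and $\hat f(v)=-f(1-v)$. \emph{Part 2:} the double reflection $(z,u)\mapsto(-z,1-u)$ sends a viscosity sub-solution of \eqref{eq 50} on $(-\infty,0)$ to a viscosity super-solution on $(0,+\infty)$ of the system with data $\hat F$, $\hat c=-c$, $\hat r_i=-r_i$, and a super-solution to a sub-solution. Nothing here is deep; the content is the bookkeeping of signs and of the viscosity formalism under the reflection.

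For Part 1, I would verify $(A)$, $(B)$, $(C)$ for $\hat F$ by direct computation; write $\mathbf{1}=(1,\dots,1)$, so that $\hat F(X)=-F(\mathbf{1}-X)$. Lipschitz regularity of $\hat F$ is immediate since $X\mapsto\mathbf{1}-X$ is an isometry. For the monotonicity in $(A)$: for $i\ne 0$ and $a>0$, $\hat F(X_0,\dots,X_i+a,\dots,X_N)=-F(1-X_0,\dots,1-X_i-a,\dots,1-X_N)\ge -F(\mathbf{1}-X)=\hat F(X)$ because $F$ is non-decreasing in its $i$-th variable, and wherever $F$ is differentiable the chain rule gives $2\,\partial_{X_0}\hat F(X)+\alpha_0=2\,\partial_{X_0}F(\mathbf{1}-X)+\alpha_0>0$ by \eqref{eq:monV0}. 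For $(B)$: $\hat f(v)=\hat F(v,\dots,v)=-f(1-v)$, hence $\hat f(0)=-f(1)=0=-f(0)=\hat f(1)$, $\hat f(\hat b)=-f(b)=0$, the sign conditions $\hat f<0$ on $(0,\hat b)$ and $\hat f>0$ on $(\hat b,1)$ follow from $f>0$ on $(b,1)$ and $f<0$ on $(0,b)$ through $v\mapsto 1-v$, and $\hat f'(\hat b)=f'(b)>0$; moreover $\hat F$ is $C^1$ near $\{\hat b\}^{N+1}$ because $F$ is $C^1$ near $\{b\}^{N+1}$, and $\hat F(\mathbf{0})=\hat F(\mathbf{1})=0$ from $f(0)=f(1)=0$. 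For $(C)$, take the same $\hat\beta_0=\beta_0$: if $a>0$ and $X,X+(a,\dots,a)\in[0,\beta_0]^{N+1}$, set $Y=\mathbf{1}-X-(a,\dots,a)$, so $Y$ and $Y+(a,\dots,a)=\mathbf{1}-X$ both lie in $[1-\beta_0,1]^{N+1}$; the second inequality of $(C)$ for $F$ gives $F(Y+(a,\dots,a))<F(Y)$, i.e. $\hat F(X+(a,\dots,a))=-F(Y)<-F(Y+(a,\dots,a))=\hat F(X)$. The case $X,X+(a,\dots,a)\in[1-\beta_0,1]^{N+1}$ is symmetric and uses the first inequality of $(C)$.

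For Part 2, I would fix an open set $\Omega\subset(0,+\infty)$, set $-\Omega=\{-z:z\in\Omega\}\subset(-\infty,0)$, and to a test function $\psi\in C^1(\Omega)$ associate $\hat\psi\in C^1(-\Omega)$ defined by $\hat\psi(z)=-\psi(-z)$, so that $\hat\psi'(-x)=\psi'(x)$. From the identity $\hat u_1(y)-\psi(y)=1-(u_1-\hat\psi)(-y)$ one sees that $(\hat u_1-\psi)$ has a local minimum at $x\in\Omega$ if and only if $(u_1-\hat\psi)$ has a local maximum at $-x$. The sub-solution property of $(u_1,u_2)$ at $-x$ gives $c\,\hat\psi'(-x)\le\alpha_0(u_2(-x)-u_1(-x))$; substituting $u_i(-x)=1-\hat u_i(x)$ and $\hat\psi'(-x)=\psi'(x)$, multiplying by $-1$ and using $\hat c=-c$, we obtain $\hat c\,\psi'(x)\ge\alpha_0(\hat u_2(x)-\hat u_1(x))$, the first super-solution inequality. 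Likewise, if $(\hat u_2-\psi)$ has a local minimum at $x$ then $(u_2-\hat\psi)$ has a local maximum at $-x$, where the sub-solution inequality reads $c\,\hat\psi'(-x)\le 2F((u_1(-x+r_i))_{i=0,\dots,N})+\alpha_0(u_1(-x)-u_2(-x))$; since $u_1(-x+r_i)=1-\hat u_1(x+\hat r_i)$ (recall $\hat r_i=-r_i$) and $F((1-\hat u_1(x+\hat r_i))_{i=0,\dots,N})=-\hat F((\hat u_1(x+\hat r_i))_{i=0,\dots,N})$, multiplying by $-1$ gives precisely $\hat c\,\psi'(x)\ge 2\hat F((\hat u_1(x+\hat r_i))_{i=0,\dots,N})+\alpha_0(\hat u_1(x)-\hat u_2(x))$. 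Because $u_1,u_2$ are upper semi-continuous, $\hat u_1,\hat u_2$ are lower semi-continuous, consistent with being a super-solution, and $\hat u_i$ has range in $[0,1]$ since $u_i$ does. The verification that $(\hat v_1,\hat v_2)$ is a sub-solution of the same system on $(0,+\infty)$ is the identical computation with every inequality reversed and the roles of upper and lower semi-continuity exchanged.

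There is no serious obstacle; the proof is pure bookkeeping. The one point deserving care is keeping the viscosity formalism consistent under the reflection — that a local minimum of $\hat u_i-\psi$ corresponds to a local maximum of $u_i-\hat\psi$, that upper and lower semi-continuity get exchanged, and that the three sign flips $\hat c=-c$, $\hat r_i=-r_i$, $\hat F=-F(\mathbf{1}-\,\cdot\,)$ combine so that the final inequalities have the correct orientation — together with, in the verification of $(C)$, seeing that $X\mapsto\mathbf{1}-X$ interchanges $[0,\beta_0]^{N+1}$ and $[1-\beta_0,1]^{N+1}$.
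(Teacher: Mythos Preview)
Your proposal is correct and follows the same reflection-and-bookkeeping approach as the paper. In fact you are more thorough: the paper's proof writes the computation at the level of classical derivatives without spelling out the test-function correspondence, and it omits the verification that $\hat F$ satisfies $(A)$, $(B)$, $(C)$ (which is asserted in the statement but not proved there), whereas you supply both.
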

\begin{proof}
Let $ ( u_{1}, u_{2}): (-\infty, r^{*}]^{2} \to [0, 1]^{2} $ be a sub-solution of  (\ref{eq 50}) and set 
$$\hat {u}_{1}(x)=1-u_{1}(-x),\quad\hat {u}_{2}(x)=1-
u_{2}(-x).$$
We have 
$$ \begin{cases}
 c\,\hat {u}_{1}'(x)=c\,u_{1}'(-x) \le \alpha_{0} (u_{2}(-x)-u_{1}(-x))\\
c\,\hat {u}_{2}'(x)=c\,u_{2}'(-x) \le -2\,F((u_{1}(-x+r_{i}))_{i=0,...,N})+\alpha_{0}(u_{1}(-x)-u_{2}(-x))
\end{cases}$$
thus 
$$ \begin{cases}
\hat{c}\,\hat {u}_{1}'(x)\ge \alpha_{0} (\hat {u}_{2}(x)-\hat {u}_{1}(x))\\
 \hat c\,\hat {u}_{2}'(x)\ge 2\,F((1-\hat{u}_{1}(x-r_{i}))_{i=0,...,N})+\alpha_{0}(\hat{u}_{2}(x)-\hat{u}_{1}(x)).
\end{cases}$$
Hence $(\hat{u}_{1}, \hat{u}_{2})$ is a super-solution of   (\ref{eq 50}) on $(0, +\infty)$. Similarly, we prove that
$(\hat{v}_{1}, \hat{v}_{2})$ is a sub-solution of the same equation on $(0, +\infty)$.

\end{proof}

\noindent The proof of Corollary \ref{cor 2} is now very easy.
\begin{proof}[Proof of Corollary \ref{cor 2}]
Let $ (u_{1}, u_{2}),\,(v_{1}, v_{2}): [-r^{*}, +\infty)^{2} \to [0,1]^{2} $ be a sub and a super-solution of (\ref{eq 50}) 
on $ (0, +\infty) $ such that $ v_{1} \ge 1-\beta_{0}$ and $ v_{2} \ge 1-\beta_{0}$ on $ [-2r^{*}, +\infty)$. We set
$$ \hat{u}_{1}(x)=1-u_{1}(-x), \quad \hat{u}_{2}(x)=1-u_{2}(-x),\quad \hat{v}_{1}(x)=1-v_{1}(-x) \quad \mbox{and} \quad \hat{v}_{2}(x)=1-v_{2}(-x). $$
Thus $ \hat{v}_{1},\hat{v}_{2} \le \beta_{0} $ on $ (-\infty, 2r^{*}]$ and by Lemma 4.3   $ (\hat{u}_{1}, \hat{u}_{2})$ and $(\hat{v}_{1}, \hat{v}_{2})$ 
 are respectively a super and a sub-solution of  (\ref{eq 50}).  
Since $ F $ satisfies (\ref{eq 51}), we deduce that $ \hat{F} $ satisfies (\ref{eq 49}).
By Theorem \ref{th 2}, we then get that
$$\begin{cases}
   \hat{v}_{1} \le \hat{u}_{1} \quad \mbox{on} \quad (-\infty, r^{*}] \\
   \hat{v}_{2} \le \hat{u}_{2} \quad \mbox{on} \quad (-\infty, r^{*}].
  \end{cases}$$
This implies that 
 $$\begin{cases}
   u_{1} \le v_{1} \quad \mbox{on} \quad [-r^{*},+\infty) \\
   u_{2} \le v_{2}  \quad \mbox{on} \quad [-r^{*},+\infty).
  \end{cases}$$
  
\end{proof}
\noindent We now go back to the proof of Theorem \ref{th 2}
\begin{proof}[Proof of Theorem \ref{th 2}]
Let $ (u_{1}, u_{2})$ and $ (v_{1},v_{2}): (-\infty, r^{*}]^{2} \to [0,1]^{2}$ be respectively a sub and a super-solution of (\ref{eq 50}) such that
$$ \begin{cases}
    u_{1} \le \beta_{0}\quad \mbox{on}\quad (-\infty, 2r^{*}] \\
    u_{2} \le \beta_{0}\quad \mbox{on}\quad (-\infty, 2r^{*}]
   \end{cases}\quad
{\rm and}\quad 
 \begin{cases}
    u_{1} \le v_{1}\quad \mbox{on}\quad [0,r^{*}] \\
    u_{2} \le v_{2}\quad \mbox{on}\quad   [0,r^{*}]
\end{cases}$$ 

\paragraph{Step 0: Introduction.}
Let 
$$ \begin{cases}
    \bar v_{1}= \min(v_{1}, \beta_{0})\\
    \bar v_{2}=\min(v_{2}, \beta_{0})
   \end{cases}$$
According to (\ref{eq 49}) we have 
$$ F(\beta_{0},...,\beta_{0}) \le 0 . $$
Therefore the constant $ \beta_{0} $ is a super-solution of (\ref{eq 50}) and then
 $ (\bar v_{1}, \bar v_{2}) $ is a super-solution of (\ref{eq 50}) on $ (-\infty,0) $
with  $ u_{1} \le \bar{v}_{1}, u_{2} \le \bar{v}_{2}\,\,\mbox{ on}\,\,[0,r^{*}]$.
Moreover, since  $ \bar{v}_{1} \le v_{1}$ and $ \bar{v}_{2} \le v_{2} $, it is sufficient to prove the comparison
principle  (Theorem \ref{th 2})  between  $ (u_{1}, u_{2})$ and
$( \bar{v}_{1}, \bar{v}_{2})$ with $ u_{1}, \bar{v}_{1}, u_{2}, \bar{v}_{2} \in [0, \beta_{0}] $.
For simplicity, we note $(\bar{v}_{1}, \bar{v}_{2})$ as  $(v_{1}, v_{2})$.

\paragraph{Step 1: Doubling the variables.} 
We assume by contradiction that
$$  M=\sup_{x\in (-\infty,r^*]} \max(u_{1}(x)-v_{1}(x),u_{2}(x)-v_{2}(x)) > 0 $$ 
Let $0< \varepsilon, \alpha <1$ and we define 
\begin{align*}
& \psi_1(x,y)= u_{1}(x)-v_{1}(y)-\frac{ \left| x-y \right|^{2}}{2\varepsilon}+\alpha  x  \\
 &\psi_2(x,y)= u_{2}(x)-v_{2}(y)-\frac{ \left| x-y \right|^{2}}{2\varepsilon}+\alpha  x 
 \end{align*}
and
$$ M_{\varepsilon, \alpha}=  \sup_{x, y \in (-\infty,r^{*}] } \max(\psi_1(x,y),\psi_2(x,y))$$
Since the function $ \psi_1$ and $\psi_2$ are upper semi-continuous and satisfy
$ \psi_1(x,y),\psi_2(x,y) \to -\infty $ as $ \left| (x,y) \right| \to +\infty $, we deduce that $\psi_1$ and $\psi_2$ reach their maximum respectively at $(x_\e^1,y_\e^1)$ and $(x_\e^2,y_\e^2)\in (-\infty,r^*]^2$. We also denote by $(x_\e,y_\e,i_\e)\in (-\infty,r^*]^2\times \{1,2\}$ such that
$$M_{\e,\a}=\psi_{i_\e}(x_\e,y_\e).$$
Moreover, for $\a$ small enough, we get that
$$ M_{\varepsilon, \alpha} \ge \frac{  M }{2} >0 .$$
Using also the fact that    $ u_{1}(x_{\varepsilon}^1)-v_{1}({x^1_{\varepsilon}}) \le \beta_{0}$ and $u_{2}({x^2_{\varepsilon}})-v_{2}({x^2_{\varepsilon}}) \le \beta_{0}$, we get then  
\begin{equation}\label{eq 53}
 \dfrac{ \left| {x}_{\varepsilon}- {y}_{\varepsilon} \right|^{2}}{ 2\varepsilon}- \alpha  {x}_{\varepsilon}  \le \beta_{0}.
\end{equation}

\paragraph{Step 2: for all $\alpha$ and $ \varepsilon $ small enough, we have $ {x}_{\varepsilon},  {y}_{\varepsilon} \in (-\infty,0)$. } 
By contradiction, assume that there exists $ \alpha $ small enough and $ \varepsilon \to 0 $ such that 
 $ {x}_{\varepsilon} \in [0, r^{*}]$ or $ {y}_{\varepsilon} \in [0, r^{*}]$ and $i_\e=i$. We suppose that  $ {x}_{\varepsilon} \in [0, r^{*}]$ (the case $ {y}_{\varepsilon} \in [0, r^{*}]$  being similar). Using  (\ref{eq 53}), we deduce that
 $ \bar{y}_{\varepsilon} \in [- \sqrt{2 (\beta_{0}+r^*) \varepsilon}, r^{*}]$. Then $ {x}_{\varepsilon} $ and $ {y}_{\varepsilon} $ converge to
$ {x}_{0} \in [0,r^{*}] $ as  $ \varepsilon \to 0 $.
We deduce that 
$$ 0 < \dfrac{M}{2} \le \limsup_{ \varepsilon \to 0}(u_{i}({x}_{\varepsilon})-v_{i}({y}_{\varepsilon})) \le u_{i}({x}_{0})-v_{i}({y}_{0}) \le 0 $$
which is a contradiction.

\paragraph{Step 3: Viscosity inequalities.}
Using that $x\mapsto \psi_1(x,y_\e^1)$ reaches a maximum at point $x_\e^1$, we get that
$$c\,\left( \frac{ {x}^1_{\varepsilon}-{y}^1_{\varepsilon}}{\varepsilon}+\alpha\right)\le \alpha_{0} (u_{2}({x}^1_{\varepsilon})-u_{1}({x}^1_{\varepsilon})).$$
In the same way, we have
$$c\,\left( \frac{ {x}^1_{\varepsilon}-{y}^1_{\varepsilon}}{\varepsilon}\right)\ge \alpha_{0} (v_{2}({y}^1_{\varepsilon})-v_{1}({y}^1_{\varepsilon})).$$
Subtracting the two inequalities, we deduce that
\begin{equation}\label{eq:20}
c\alpha\le \a_0\left((u_{2}({x}^1_{\varepsilon})-v_{2}({y}^1_{\varepsilon}))-(u_{1}({x}^1_{\varepsilon})-v_{1}({y}^1_{\varepsilon}))\right)
\end{equation}
In the same way (using $\psi_2$) we get that
\begin{align}\label{eq 56}
c\alpha \le& 2\Big[(F((u_{1}({x}^2_{\varepsilon}+r_{i}))_{i=0,...,N})-F((v_{1}({y}^2_{\varepsilon}+r_{i}))_{i=0,...,N})\Big]\\
&+\alpha_{0} ((u_{1}({x}^2_{\varepsilon})-v_{1}({y}^2_{\varepsilon}))-(u_{2}({x}^2_{\varepsilon})-v_{2}({y}^2_{\varepsilon}))\nonumber
\end{align}

\paragraph{Step 4: Passing to the limit $ \e, \alpha \to 0 $.}
We set 
$$u_{j,i}^{\e,k}=u_j(x_\e^k+r_i),\quad {\rm and }\quad v_{j,i}^{\e,k}=v_j(x_\e^k+r_i).$$

The proof is split into two cases:

\paragraph{Case 1: $\exists\; \e,\a\to 0$ such that $i_\e=1$.}
In that case, equation \eqref{eq:20} implies that
$$\psi_2(x_\e^2,y_\e^2)\ge \psi_2(x_\e^1,y_\e^1)\ge\psi_1(x_\e^1,y_\e^1) +\frac {c\a}{\a_0}\ge \frac M2+\frac {c\a}{\a_0}\ge \frac M4$$
for $\a$ small enough. Hence, using classical arguments, we deduce that
\begin{equation}\label{eq:21}
  \dfrac{ \left| {x}^2_{\varepsilon}- {y}^2_{\varepsilon} \right|^{2}}{ 2\varepsilon},  \alpha  {x}^2_{\varepsilon}\to 0 \quad{\rm as} \quad \e,\a\to0.
\end{equation}
Moreover, using that $\psi_1(x^2_\e+r_i, y_\e^2+r_i)\le \psi_1(x^1_\e, y_\e^1)$, we have
$$ u_{1,i}^{\e,2}\le v_{1,i}^{\e,2}+m_\e+\delta_i^\e$$
where $m_\e=u_1(x_\e^1)-v_1(y^1_\e)$, and $\delta_i^\e=\frac{|x^2_\e-y_\e^2|^2}{2\e}-\a (x^2_\e+r_i)$ (note that $\a x^1_\e\le 0$).
Since $ u_{j,i}^{\e,k}, v_{j,i}^{\e,k} \in [0, \beta_{0}]$ and $ \frac{M^{1}}{2} \le m_{\e} \le \beta_{0}$, we deduce that as 
$ \e, \alpha \to 0 $
$$ \begin{cases}
 u_{j,i}^{\e,k}\to u_{1,i}^k\\
v_{j,i}^{\e,k}\to v_{1,i}^k\\
    m_{\e} \rightarrow m_{0}=u_{1,0}^1-v_{1,0}^1\\
    \delta_{i}^{\e} \to 0
   \end{cases}$$
with  $ u_{j,i}^k, v_{j,i}^k \in [0,\beta_{0}]$, 
$0< \frac{M^{1}}{2} \le m_{0} \le \beta_{0} $ and 
$$u_{1,i}^2\le v_{1,i}^2+m_0.$$
Passing  to the limit in (\ref{eq 56}) implies that 
$$
0\le 2\,(F((u_{1,i}^{2})_{i=0,...,N})-F((v_{1,i}^{2})_{i=0,...,N}))+\alpha_{0}((u_{1,0}^2-v_{1,0}^2)-(u_{2,0}^2-v_{2,0}^2)). 
$$
We define $\bar m=m_0-(u_{1,0}^2-v_{1,0}^2)\ge 0$.
Thus, using the monotony \eqref{eq:monV0}, we get
$$0\le 2\,(F(u_{1,0}^2+\bar m, (u_{1,i}^{2})_{i=1,...,N})-F((v_{1,i}^{2})_{i=0,...,N}))+\alpha_{0}( (u_{1,0}^1-v_{1,0}^1)-(u_{2,0}^2-v_{2,0}^2)). 
$$
Passing to the limit in \eqref{eq:20} and using the fact that $u^1_{2,0}-v^1_{2,0}\le u^2_{2,0}-v^2_{2,0}$, we get that
$$\a_0( (u_{1,0}^1-v_{1,0}^1)-(u_{2,0}^2-v_{2,0}^2))\le 0.$$
Thus, up to redefine $u_{1,0}^2$ by $u_{1,0}^2+\bar m \in [0,\beta_0]$, we get
\begin{equation}\label{eq 57}
0\le F(u_{1,0}^2, (u_{1,i}^{2})_{i=1,...,N})-F((v_{1,i}^{2})_{i=0,...,N}). 
\end{equation}
with $ u_{j,i}^k, v_{j,i}^k \in [0,\beta_{0}]$, 
$0< \frac{M^{1}}{2} \le m_{0} \le \beta_{0} $,
$$u_{1,i}^2\le v_{1,i}^2+m_0\quad {\rm and}\quad u^2_{1,0}=v^2_{1,0}+m_0.$$

\paragraph{Case 2: $\exists\; \e,\a\to 0$ such that $i_\e=2$.} Using that $\psi_1(x_\e^2+r_i,y_\e^2+r_i)\le \psi_2(x_\e^2,y_\e^2)$, we get that
$$u_{1,i}^{\e,2}\le v_{1,i}^{\e,2}+m_\e+\delta_i^\e$$
where $m_\e=u_2(x_\e^2)-v_2(y^2_\e)$, and $\delta_i^\e=-\a r_i$.
Since $ u_{j,i}^{\e,k}, v_{j,i}^{\e,k} \in [0, \beta_{0}]$ and $ \frac{M^{1}}{2} \le m_{\e} \le \beta_{0}$, we deduce that as 
$ \e, \alpha \to 0 $
$$ \begin{cases}
 u_{j,i}^{\e,k}\to u_{1,i}^k\\
v_{j,i}^{\e,k}\to v_{1,i}^k\\
    m_{\e} \rightarrow m_{0}=u_{2,0}^2-v_{2,0}^2\\
    \delta_{i}^{\e} \to 0
   \end{cases}$$
with  $ u_{j,i}^k, v_{j,i}^k \in [0,\beta_{0}]$, 
$0< \frac{M^{1}}{2} \le m_{0} \le \beta_{0} $ and 
$$u_{1,i}^2\le v_{1,i}^2+m_0.$$
Passing  to the limit in (\ref{eq 56}) implies that 
$$
0\le 2\,(F((u_{1,i}^{2})_{i=0,...,N})-F((v_{1,i}^{2})_{i=0,...,N}))+\alpha_{0}((u_{1,0}^2-v_{1,0}^2)-(u_{2,0}^2-v_{2,0}^2)). 
$$
We define $\bar m=m_0-(u_{1,0}^2-v_{1,0}^2)\ge 0$.
Thus, using the monotony \eqref{eq:monV0}, we get
\begin{align*}
0\le& 2\,(F(u_{1,0}^2+\bar m, (u_{1,i}^{2})_{i=1,...,N})-F((v_{1,i}^{2})_{i=0,...,N}))+\alpha_{0}( (u_{1,0}^2-v_{1,0}^2)-(u_{2,0}^2-v_{2,0}^2)+\bar m)\\
=&2\,(F(u_{1,0}^2+\bar m, (u_{1,i}^{2})_{i=1,...,N})-F((v_{1,i}^{2})_{i=0,...,N}))
\end{align*}
Thus, up to redefine $u_{1,0}^2$ by $u_{1,0}^2+\bar m \in [0,\beta_0]$, we get
\begin{equation}\label{eq 57bis}
0\le F((u_{1,i}^{2})_{i=0,...,N})-F((v_{1,i}^{2})_{i=0,...,N}). 
\end{equation}
with $ u_{j,i}^k, v_{j,i}^k \in [0,\beta_{0}]$, 
$0< \frac{M^{1}}{2} \le m_{0} \le \beta_{0} $,  
$$u_{1,i}^2\le v_{1,i}^2+m_0\quad {\rm and}\quad u^2_{1,0}=v^2_{1,0}+m_0.$$

\paragraph{Step 5: Getting a contradiction.}

We claim that for all $i,$ there exists
$l_{i},\,l'_{i}\geq 0$ such that
\begin{equation}\label{eq29}
u_{1,i}^2+l_{i}=v_{1,i}^2-l'_{i}+m_{0},
\end{equation}
and
$$\left\{
\begin{aligned}
&\overline{u}_{1,i}^2:=u_{1,i}^2+l_{i}\leq \beta_{0}\\
&\overline{v}_{1,i}^2:=v_{1,i}^2-l'_{i}\geq 0.
\end{aligned}
\right.$$
Recall that for all $i\in\{0,...,N\},$ we have
$$\left\{
\begin{aligned}
&u_{1,i}^2,\,v_{1,i}^2\in[0,\beta_{0}]\\
&u_{1,i}^2\leq v_{1,i}^2+m_{0}\\
&u_{1,0}^2-v_{1,0}^2=m_{0}\leq\beta_{0}.
\end{aligned}
\right.$$
If for some $i,$ $u_{1,i}^2=v_{1,i}^2+m_{0},$ then it suffices to take $l_i=l_i'=0$.
Assume then that $u_{1,i}^2<v_{1,i}^2+m_{0}$.

\paragraph{Case 1: $u_{1,i}^2,\,v_{1,i}^2\in(v_{1,0}^2,u_{1,0}^2)$.}
Set $l_{i}=u_{1,0}^2-u_{1,i}^2$ and $l'_{i}=v_{1,i}^2-v_{1,0}^2$. Then
$$\left\{
\begin{aligned}
&\overline{u}_{1,i}^2=u_{1,i}^2+l_{i}=u_{1,0}^2\leq\beta_{0}\\
&\overline{v}_{1,i}^2=v_{1,i}^2-l'_{i}=v_{1,0}^2\geq 0,
\end{aligned}
\right.$$
and $\overline{u}_{1,i}^2=\overline{v}_{1,i}^2+m_{0}.$

\paragraph{Case 2: $u_{1,i}^2>u_{1,0}^2$ and $v_{1,i}^2>v_{1,0}^2$.}
Since $u_{1,i}^2-v_{1,0}^2>m_{0},$ then there exists $l'_{i}<v_{1,i}^2-v_{1,0}^2$ such that
$$u_{1,i}^2=v_{1,i}^2-l'_{i}+m_{0}$$ and $\overline{v}_{1,i}^2=v_{1,i}^2-l'_{i}>v_{1,0}^2\geq 0.$
Thus, it is sufficient to take $l_{i}=0.$

\paragraph{Case 3: $u_{1,i}^2<u_{1,0}^2$ and $v_{1,i}^2<v_{1,0}^2$.}
This case can be treated as Case 2 by taking $l'_{i}=0$ and $l_{i}<u_{1,0}^2-u_{1,i}^2.$
\bigskip

Finally, going back to (\ref{eq 57}) or \eqref{eq 57bis}, since $F$ is non-decreasing, we deduce that
\begin{eqnarray*}
0&\leq&F((u_{1,i}^2)_{i=0,...,N})-F((v_{1,i}^2)_{i=0,...,N})\\
&\leq&F((\overline{u}_{1,i}^2)_{i=0,...,N})-F((\overline{v}_{1,i}^2)_{i=0,...,N})\\
&=&F((\overline{u}_{1,i}^2)_{i=0,...,N})-F((\overline{u}_{1,i}^2-m_{0})_{i=0,...,N})\\
&<&0.
\end{eqnarray*}
Last inequality takes place since $F$ verifies (\ref{eq 49}) for
$\overline{u}_{1,i}^2,\,\overline{u}_{1,i}^2-m_{0}\in[0,\beta_{0}]$ and $m_{0}>0.$ Therefore, we get a contradiction.

\end{proof}

\subsection{ Uniqueness of the velocity}
In this subsection, we use Theorem \ref{th 2} and Corollary \ref{cor 2} in order to prove the uniqueness of the velocity $c$.

\begin{pro}[Uniqueness of the velocity]\label{pro 4}
Under assumptions $(A)$, we consider the function $ F $ defined on $[0,1]^{N+1} $. 
Let $(c_{1}, ( \phi_{11}, \phi_{12}))$ and  $(c_{2}, ( \phi_{21}, \phi_{22}))$  be two solutions of (\ref{eq 5}), with $\phi_{11},\phi_{12},\phi_{21},\phi_{22}:\R\to [0,1]$.
If $ F $ satisfies in addition $ (C) $, then $ c_{1} = c_{2} $.
\end{pro}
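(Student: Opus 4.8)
The plan is to argue by contradiction, assuming $c_1\ne c_2$, and to feed the half‑line comparison principles of Theorem~\ref{th 2} and Corollary~\ref{cor 2} into a sliding argument. After relabelling we may assume $c_1<c_2$ (and, if convenient, the space reversal of Lemma~\ref{lemma 8} lets one interchange the behaviours near $0$ and near $1$); I would treat in detail the main case $c_1,c_2\ne 0$, the degenerate cases $c_i=0$ being handled identically once the classical ODE arguments below are replaced by their a.e./viscosity counterparts via Lemma~\ref{lem 6}. The first observation is that, since the profiles are non‑decreasing and $c_1<c_2$, the pair $(\phi_{21},\phi_{22})$ --- which solves \eqref{eq 5} with velocity $c_2$ --- is a \emph{sub}-solution of the system \eqref{eq 50} with velocity $c_1$ (briefly, the $c_1$-system): indeed $c_1\phi_{21}'\le c_2\phi_{21}'=\alpha_0(\phi_{22}-\phi_{21})$, and likewise for the second equation. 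On the other hand every translate $(\phi_{11}(\cdot+\tau),\phi_{12}(\cdot+\tau))$ is an exact solution, in particular a \emph{super}-solution, of the same $c_1$-system.

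Next I would slide one profile over the other. Fix $\beta_0$ as in $(C)$, shrunk so that $\beta_0<1/2$. Using $\phi_{2j}(-\infty)=0$, choose $\sigma\ll 0$ so that $\phi_{2j}(\cdot+\sigma)\le\beta_0$ on $(-\infty,2r^*]$; using $\phi_{1j}(+\infty)=1$, choose $\tau\gg 0$ so that $\phi_{1j}(\cdot+\tau)\ge 1-\beta_0$ on $[-2r^*,+\infty)$. On the window $[-r^*,r^*]$ one then has $\phi_{2j}(\cdot+\sigma)\le\beta_0<1-\beta_0\le\phi_{1j}(\cdot+\tau)$, so the pairs are ordered there; Theorem~\ref{th 2} (applied with $u_j=\phi_{2j}(\cdot+\sigma)$, $v_j=\phi_{1j}(\cdot+\tau)$) propagates the ordering to $(-\infty,r^*]$, and Corollary~\ref{cor 2} propagates it to $[-r^*,+\infty)$. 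Hence $\phi_{2j}(\cdot+\sigma)\le\phi_{1j}(\cdot+\tau)$ on $\R$, i.e. $\phi_{2j}\le\phi_{1j}(\cdot+T)$ on $\R$ with $T:=\tau-\sigma$ arbitrarily large. Since $\phi_{1j}$ is non‑decreasing, the set $\mathcal T$ of all $T\in\R$ for which $\phi_{2j}\le\phi_{1j}(\cdot+T)$ on $\R$ ($j=1,2$) is an up‑set; it is closed (continuity of $\phi_{1j}$, available since $c_1\ne 0$) and bounded below, because letting $T\to-\infty$ would force $\phi_{2j}\equiv 0$, contradicting $\phi_{2j}(+\infty)=1$. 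Set $T^*:=\min\mathcal T$.

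The contradiction comes from examining the critical translation. Let $w_j:=\phi_{1j}(\cdot+T^*)-\phi_{2j}\ge 0$. If $w_j>0$ everywhere on $\R$ for $j=1,2$, then on three overlapping pieces $(-\infty,B]$, $[B,A]$, $[A,+\infty)$ --- chosen, for $B\ll 0\ll A$, so that $\phi_{2j}\le\beta_0$ to the left of the first, $\phi_{1j}(\cdot+T^*)\ge 1-\beta_0$ to the right of the last, and $w_j$ is bounded below by a positive constant on the middle compact piece and near the interfaces --- one re‑runs Theorem~\ref{th 2}, Corollary~\ref{cor 2} and a compactness argument to obtain $\phi_{2j}\le\phi_{1j}(\cdot+T^*-\varepsilon)$ on $\R$ for $\varepsilon>0$ small, contradicting the minimality of $T^*$. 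Therefore there is a finite contact point $z_0$ and an index $j_0$ with $w_{j_0}(z_0)=0$. Since $c_1\ne 0$ the $c_1$-system is a genuine first‑order ODE system and $(w_1,w_2)$ satisfies a cooperative differential inequality (using the monotonicity of $F$ in $X_i$ for $i\ne 0$ together with \eqref{eq:monV0}); an integrating‑factor/Gronwall argument then forces $w\equiv 0$ near $z_0$, and a connectedness argument gives $\phi_{2j}\equiv\phi_{1j}(\cdot+T^*)$ on all of $\R$. But then $(\phi_{21},\phi_{22})$ solves \eqref{eq 5} both with velocity $c_1$ and with velocity $c_2$; subtracting the first equations yields $(c_1-c_2)\phi_{21}'\equiv 0$, so $\phi_{21}$ is constant, contradicting $\phi_{21}(-\infty)=0\ne 1=\phi_{21}(+\infty)$.

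The parts I expect to require the most care are the ``robustness'' step at $T^*$ --- patching Theorem~\ref{th 2} and Corollary~\ref{cor 2} on shifted half‑lines so as to lower $T^*$ when no finite contact point exists --- and the strong‑comparison step at the contact point; by contrast, the bookkeeping for the degenerate cases $c_1=0$ or $c_2=0$, carried out through the a.e.\ formulation of Lemma~\ref{lem 6}, is routine.
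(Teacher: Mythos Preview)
Your approach departs from the paper's in a significant way, and the departure introduces a genuine gap.

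The paper's proof, after using Theorem~\ref{th 2} and Corollary~\ref{cor 2} once to obtain $\phi_{2j}\le\phi_{1j}$ on $\R$, does \emph{not} slide. Instead it passes to the time-dependent problem: setting $u_j(t,x)=\phi_{1j}(x+c_1 t)$ and $u_{j+2}(t,x)=\phi_{2j}(x+c_2 t)$, these solve the evolution system \eqref{eq 60} and are ordered at $t=0$; the comparison principle for that monotone evolution system then gives $\phi_{21}(y+(c_2-c_1)t)\le\phi_{11}(y)$ for all $t\ge 0$ and all $y$, and sending $t\to+\infty$ with $c_2>c_1$ forces $\phi_{11}\ge 1$ on $\R$, contradicting $\phi_{11}(-\infty)=0$.

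Your sliding strategy is natural, but the ``strong-comparison step at the contact point'' does not go through under the hypotheses of Proposition~\ref{pro 4}. The cooperative differential inequality you invoke is exactly the one in the proof of Lemma~\ref{lemma 9}: writing $w=w_1+w_2\ge 0$ one obtains only $c_1 w'\ge -2L\,w$, so Gronwall yields $w\equiv 0$ on a \emph{half}-line (the side determined by the sign of $c_1$), not on all of $\R$. Upgrading this to the full line---your ``connectedness argument''---is precisely the content of Lemmas~\ref{lemma 10} and~\ref{lem 14}, and both require the extra structural hypothesis $(D\pm)$ (either strict monotonicity of $F$ in some $X_{i_0}$ with $r_{i_0}$ of the correct sign, or all $r_i$ of one sign). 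Proposition~\ref{pro 4} assumes only $(A)$ and $(C)$; under these alone no two-sided strong maximum principle is available, and from $w\equiv 0$ on, say, $(-\infty,z_0]$ one only deduces that $\phi_{21}\equiv\phi_{22}\equiv 0$ there (via $(c_1-c_2)\phi_{21}'=0$), which is not a contradiction. The paper's time-evolution argument sidesteps this issue because it never needs a contact point: the strict inequality $c_1<c_2$ is exploited dynamically rather than through a strong maximum principle.

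(A smaller point, shared with the paper's own argument: the step ``$c_1\phi_{21}'\le c_2\phi_{21}'$'' uses $\phi_{21}'\ge 0$, i.e.\ monotonicity of the profile, which is not part of the hypotheses of \eqref{eq 5}; this is harmless for the intended application but worth being aware of.)
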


\begin{proof}
Assume that  $(c_{1}, (\phi_{11}, \phi_{12}))$ and  $(c_{2}, (\phi_{21}, \phi_{22}))$ are solutions of (\ref{eq 5}) and assume by contradiction that
$ c_{1} < c_{2} $. 
We have 
$$\begin{cases}
\phi_{11}(-\infty)=0, \,\,\phi_{11}(+\infty)=1 \\
\phi_{12}(-\infty)=0, \,\,\phi_{12}(+\infty)=1. 
\end{cases} 
\quad \mbox{and} \quad
\begin{cases}
\phi_{21}(-\infty)=0, \,\,\phi_{21}(+\infty)=1 \\
\phi_{22}(-\infty)=0, \,\,\phi_{22}(+\infty)=1. 
\end{cases} $$
We set $ \delta= \min(\beta_{0}, \frac{1}{4}) $ where $ \beta_{0}$ is given in assumption $ (C) $ and up to translate $(\phi_{11},\phi_{12})$ and $(\phi_{21},\phi_{22})$, we assume that 
$$ \begin{cases}
    \phi_{11}(x) \ge 1- \delta \quad \forall  \quad x \ge -2r^{*} \\
   \phi_{12}(x) \ge 1- \delta  \quad \forall   \quad x \ge -2r^{*} 
   \end{cases} $$
and 
$$ \begin{cases}
 \phi_{21}(x) \le  \delta \quad \forall  \quad x \le 2r^{*} \\
\phi_{22}(x) \le  \delta \quad \forall    \quad x \le2 r^{*}.
   \end{cases} $$
This implies that 
$$ \begin{cases}
 \phi_{21}(x) \le \phi_{11}(x) \quad \mbox{over} \quad [-r^{*}, r^{*}] \\
 \phi_{22}(x) \le \phi_{12}(x)  \quad  \mbox{over} \quad [-r^{*}, r^{*}].
 \end{cases} $$
Moreover, since $ c_{1} < c_{2} $, we have
$$ \begin{cases}
  c_{1} \,\, (\phi_{21})'(x) \le c_{2} \,\, (\phi_{21})'(x)= \alpha_{0} (\phi_{22}(x)-\phi_{21}(x)) \\
  c_{1} \,\,  (\phi_{22})'(x) \le c_{2} \,\,  (\phi_{22})'(x)= 2\,F((\phi_{21}(x-r_{i})_{i=0,...,N})+\alpha_{0}( \phi_{21}(x)- \phi_{22}(x)).
   \end{cases}$$
Thus $ (c_{1},(\phi_{21},\phi_{22})) $  is a sub-solution of (\ref{eq 5}). Using Corollary \ref{cor 2}, we deduce that 
$$\begin{cases}
   \phi_{21} \le \phi_{11}\,\,\mbox{over} \,\,[-r^{*}, +\infty) \\
   \phi_{22} \le \phi_{12}\,\,\mbox{over}\,\,[-r^{*}, +\infty).
  \end{cases} $$
Similarly, using Theorem \ref{th 2}, we get that
$$\begin{cases}
   \phi_{21} \le \phi_{11} \,\,\mbox{over} \,\, (-\infty, r^{*}] \\
   \phi_{22} \le \phi_{12} \,\,\mbox{over}\,\,  (-\infty, r^{*}] .
  \end{cases} $$
Therefore 
$$\begin{cases}
   \phi_{21} \le \phi_{11} \,\,\mbox{over} \,\, \mathbb{R} \\
   \phi_{22} \le \phi_{12} \,\, \mbox{over} \,\,  \mathbb{R}.
  \end{cases} $$
We set 
$$\begin{cases}
 u_{1}(t,x)=\phi_{11}(x+c_{1}t)\\
 u_{2}(t,x)=\phi_{12}(x+c_{1}t)\\
 u_{3}(t,x)=\phi_{21}(x+c_{2}t)\\
 u_{4}(t,x)=\phi_{22}(x+c_{2}t).
  \end{cases} $$
Then for $i=1, j=2 $ and $ i=3, j=4 $, we have
\begin{equation}\label{eq 60}
\left\lbrace
\begin{array}{lcl}
  \partial_{t} u_{i}(t,x)= \alpha_{0} (u_{j}(t,x)-u_{i}(t,x)) \\
  \partial_{t} u_{j}(t,x)= 2\,F((u_{i}(t,x+ri))_{i=0,...,N})+\alpha_{0}(u_{i}(t,x)-u_{j}(t,x)).
\end{array}
\right.
\end{equation}
Moreover, at time $ t=0 $, 
\begin{equation}\label{eq 61}
\left\lbrace
\begin{array}{lcl}
  u_{1}(0,x)=\phi_{11}(x) \ge \phi_{21}(x)=u_{3}(0,x)\,\, \mbox{over} \,\, \mathbb{R} \\
  u_{2}(0,x)=\phi_{12}(x) \ge \phi_{22}(x)=u_{4}(0,x)\,\,  \mbox{over} \,\, \mathbb{R}.
\end{array}
\right.
\end{equation}
Then, applying the comparison principle for equation (\ref{eq 60}), we get 
 $$\begin{cases}
 u_{1} \ge u_{3}\,\,\forall\,\,t\ge 0 \,\, \forall \, x \in \mathbb{R} \\
 u_{2} \ge u_{4}\,\,\forall\,\,t\ge 0 \,\, \forall \, x \in \mathbb{R} .
  \end{cases} $$
Taking $ x=y-c_{1}\,t$, yields
 $$\begin{cases}
   \phi_{11}(y) \ge \phi_{21}(y+(c_{2}-c_{1})t)\quad \forall t \ge 0, \forall y \in \mathbb{R} \\
   \phi_{12}(y) \ge \phi_{22}(y+(c_{2}-c_{1})t)\quad \forall t \ge 0, \forall y \in \mathbb{R}.
  \end{cases} $$
Using that $ c_{1} < c_{2} $ and passing to the limit $ t \to +\infty $, we get 
 $$\begin{cases}
   \phi_{11}(y) \ge \phi_{21}(+\infty)=1 \,\,\forall y \in \mathbb{R} \\
   \phi_{12}(y) \ge \phi_{22}(+\infty)=1 \,\,\forall y \in \mathbb{R}.
  \end{cases} $$
But $  \phi_{11}(-\infty)=0 $ and $  \phi_{12}(-\infty)=0 $, hence a contradiction. Therefore  $ c_{1} \ge c_{2} $.
Similarly, we prove that  $  c_{2} \ge c_{1} $. Thus $ c_{1}=c_{2} $.

\end{proof}

\section{Uniqueness of the profile }\label{sec:5}
This section is devoted to the proof of the uniqueness of the profiles (under assumption $ (D\pm) $) using tow different types of strong maximum principle.
\subsection{Different types of strong maximum principle}
\begin{lem}[Half Strong Maximum Principle]\label{lemma 9}
Let  $F:[0,1]^{N+1} \to \mathbb{R} $ satisfying assumption  $(A)$ and let  $(\phi_{11}, \phi_{12})$ and $(\phi_{21}, \phi_{22})$  be 
respectively a viscosity sub and super-solution of (\ref{eq 5}), with $\phi_{11},\phi_{12},\phi_{21},\phi_{22}:\R\to [0,1]$. We assume that 
$$ \begin{cases}
\phi_{21} \ge \phi_{11} \quad \mbox{on} \quad \mathbb{R}\\
\phi_{22} \ge \phi_{12}\quad \mbox{on} \quad\mathbb{R}\\
\phi_{21}(0)=\phi_{11}(0) \\
\phi_{22}(0)=\phi_{12}(0). 
 \end{cases}$$
If $ c >0 $ (resp. $ c < 0 $), then 
$$\begin{cases}
   \phi_{11}= \phi_{21}\quad \mbox{ for\,\,all} \quad x\,\le\,0\,\,(resp.\,x\,\ge\,0)\\
   \phi_{12}= \phi_{22}\quad \mbox{ for\,\,all} \quad x\,\le\,0\,\,(resp.\,x\,\ge\,0).\\
  \end{cases}$$
\end{lem}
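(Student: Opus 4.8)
The plan is to use only the first equation of \eqref{eq 5} --- a decoupled linear ODE in the first unknown --- the nonlocal second equation playing no role. Write $w_1=\phi_{21}-\phi_{11}$ and $w_2=\phi_{22}-\phi_{12}$, so that $w_1,w_2\ge 0$ on $\R$ and $w_1(0)=w_2(0)=0$. Subtracting the viscosity inequality for the first equation satisfied by the super-solution $(\phi_{21},\phi_{22})$ from the one satisfied by the sub-solution $(\phi_{11},\phi_{12})$, one gets that $w_1$ is a viscosity super-solution of the scalar linear equation $c\,u'(x)=\alpha_0\big(w_2(x)-u(x)\big)$ on $\R$; in particular, since $w_2\ge 0$, $w_1$ is a super-solution of $c\,u'(x)+\alpha_0 u(x)\ge 0$. (That the difference of a super- and a sub-solution of the first, linear equation is again a super-solution is routine and is checked by doubling the variable, exactly as in the proof of Theorem \ref{th 2}; in the situations where the lemma is applied the profiles are Lipschitz, so this step is elementary.)

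Multiplying by the integrating factor $e^{\alpha_0 x/c}$, the function $x\mapsto e^{\alpha_0 x/c}w_1(x)$ is monotone --- non-decreasing when $c>0$, non-increasing when $c<0$ --- by the standard viscosity characterisation of monotone functions. Being non-negative and vanishing at $0$, it must vanish identically on $(-\infty,0]$ if $c>0$ and on $[0,+\infty)$ if $c<0$; that is, $\phi_{11}=\phi_{21}$ on this half-line, which I denote by $J$.

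It remains to transfer the equality to the second profiles. On $J$ we now have $w_1\equiv 0$, so $w_1$ is continuous on $J$ and the constant function $\psi\equiv 0$ touches $w_1$ from below at every interior point $z\in J$; the super-solution inequality $c\,\psi'(z)\ge\alpha_0\big(w_2(z)-w_1(z)\big)$ then reads $0\ge\alpha_0\,w_2(z)$, so $w_2\le 0$ on $J$ and hence $w_2\equiv 0$ on $J$, using $w_2\ge 0$ and lower semicontinuity. (Equivalently, integrating the two viscosity inequalities on sub-intervals of $J$ gives $\int e^{\alpha_0 s/c}\big(\phi_{22}(s)-\phi_{12}(s)\big)\,ds\le 0$ with non-negative integrand, so $\phi_{12}=\phi_{22}$ a.e.\ on $J$, hence everywhere by semicontinuity.) This proves $\phi_{11}=\phi_{21}$ and $\phi_{12}=\phi_{22}$ on $(-\infty,0]$ (resp.\ $[0,+\infty)$), as claimed. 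The one genuinely delicate point is the viscosity bookkeeping of the first paragraph; the case $c<0$ may alternatively be deduced from $c>0$ via the reflection $x\mapsto -x$, which sends $c$ to $-c$.
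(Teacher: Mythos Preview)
Your argument is correct and is actually more economical than the paper's. The paper proceeds by contradiction: it writes the coupled system for $w_1=\phi_{21}-\phi_{11}$, $w_2=\phi_{22}-\phi_{12}$, uses the monotonicity of $F$ in $X_i$ ($i\neq 0$) and its Lipschitz bound to estimate the second equation, then \emph{sums} to obtain $c\,w'\ge -2L\,w$ for $w=w_1+w_2$, and finishes with the Gronwall inequality $w(x)\ge w(x_0)\,e^{-2L(x-x_0)/c}$, which contradicts $w(0)=0$ if $w(x_0)>0$ for some $x_0<0$.

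Your route bypasses the second equation entirely: from the first (linear) equation alone and $w_2\ge 0$ you get $c\,w_1'+\alpha_0 w_1\ge 0$, hence $e^{\alpha_0 x/c}w_1$ is non-decreasing and vanishes at $0$, forcing $w_1\equiv 0$ on the half-line; the first equation then feeds back $w_2\equiv 0$. This is cleaner and uses neither the Lipschitz constant nor the monotonicity of $F$; the only genuine technical point --- that the difference of the linear super- and sub-solution is again a super-solution --- you correctly flag, and it is indeed routine (and trivial once one knows the profiles are $C^1$, which they are in every application since $c\neq 0$). The paper's approach has the mild advantage of treating $w_1$ and $w_2$ symmetrically in one stroke, but at the cost of invoking structure on $F$ that is not actually needed for this lemma.
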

\begin{proof}
We do the proof in the case where $c>0$. By contradiction, assume that there exists $x_0<0$ such that 
$$\phi_{21}(x_0)>\phi_{11}(x_0)\quad {\rm or}\quad \phi_{22}(x_0)>\phi_{12}(x_0).$$
Let $ w_{1}(x)=\phi_{21}(x)-\phi_{11}(x) $ and $ w_{2}(x)=\phi_{22}(x)-\phi_{12}(x) $. 
A simple computation gives that
\begin{equation}\label{eq 62}
\left\lbrace
\begin{array}{lll}
c\,w_{1}'(x) &\ge& \alpha_{0}\left(w_{2}(x)-w_{1}(x)\right)\\
c\,w_{2}'(x) &\ge& 2(F((\phi_{21}(x+r_{i}))_{i=0,...,N})-F((\phi_{11}(x+r_{i}))_{i=0,...,N})+ \alpha_{0} \left(w_{1}(x)-w_{2}(x)\right).
 \end{array}
\right.
\end{equation}
Using that $ F $ is non-decreasing w.r.t. $ X_{i} $ for all $ i \neq 0 $, we get
$$ \left\{\begin{array}{lll}
c\,w_{1}'(x)& \ge& \alpha_{0}\left(w_{2}(x)-w_{1}(x)\right),\\
c\,w_{2}'(x) &\ge& 2\left(F(\phi_{11}(x)+w_{1}(x), (\phi_{11}(x+r_{i}))_{i=1,...,N})-F(\phi_{11}(x),(\phi_{1}(x+r_{i}))_{i=1,...,N}\right))\\
&&+ \alpha_{0} \left(w_{1}(x)-w_{2}(x)\right)
\end{array}\right.$$
Let $ w(x)=w_{1}(x)+w_{2}(x) $. Then
$$ c\,w'(x) \ge 2\left(F(\phi_{11}(x)+w_{1}(x), (\phi_{11}(x+r_{i}))_{i=1,...,N})-F(\phi_{11}(x),(\phi_{11}(x+r_{i}))_{i=1,...,N}\right)) $$
Since $ F $ is globally Lipschitz continuous (we denote by $L$ its Lipschitz constant), we have 
\begin{equation}\label{eq 64}
w'(x) \ge -\dfrac{2\,L}{c}\,\,w_{1}(x)  \ge -\dfrac{2\,L}{c}\,\,w(x).
\end{equation}\medskip

We note that  $ y(x):= w(x_{0})\,\,\exp{\left(-\dfrac{2\,L\,(x-x_{0})}{c}\right)}$ satisfied (\ref{eq 64}) for all $ x_{0} \in \mathbb{R} $. Using the comparison principle,
we deduce that 
\begin{equation}\label{eq:1000}
 w(x) \ge   w(x_{0})\,\,\exp{\left(-\dfrac{2\,L\,(x-x_{0})}{c}\right)}\,\,\mbox{for\,\,all}\,\,x \ge x_{0}. 
 \end{equation}
Since $ w_{1}(x_{0}) > 0 $  or $ w_{2}(x_{0}) > 0$, we have
$$ w(x_{0}) > 0  . $$ 
This implies that
$$ w(x) > 0  \,\,\mbox{for\,\,all}\,\,x\,\ge\,x_{0}. $$  
In particular, for $x=0$, we get
$$ w_{1}(0) > 0\quad \mbox{or}\quad  w_{2}(0) > 0,
$$
i.e.
$$ 
  \phi_{21}(0) > \phi_{11}(0) \quad \mbox{or}\quad
  \phi_{22}(0) > \phi_{12}(0), 
$$
which is a contradiction.

\end{proof}
We now use Lemma \ref{lemma 9} in order to get a Strong Maximum Principle under assumption $ (D\pm)$ $ii)$.
\begin{lem}[Strong Maximum Principle under  $ (D\pm)$ $ii)$]\label{lemma 10}
Let $F:[0,1]^{N+1}  \to \mathbb{R}$ satisfying $(A)$. Let $ (\phi_{11}, \phi_{12}) $ and $ (\phi_{21}, \phi_{22}) $, with $\phi_{11},\phi_{12},\phi_{21},\phi_{22}:\R\to [0,1]$,  be respectively a viscosity sub
and super-solution of (\ref{eq 5}) such that 
$$ \begin{cases} 
\phi_{21} \ge \phi_{11} \quad \mbox{on} \quad \mathbb{R}\\
\phi_{22} \ge \phi_{12} \quad \mbox{on} \quad \mathbb{R}\\
 \phi_{21}(0)=\phi_{11}(0) \\
 \phi_{22}(0)=\phi_{12}(0). 
\end{cases} $$
a) If $F$ is increasing w.r.t. $ X_{i_{0}} $  for a certain $ i_{0} \neq 0 $ then 
$$\begin{cases}
\phi_{21}(k\,r_{i_{0}})=\phi_{11}(k\,r_{i_{0}}) \quad \mbox{for\,\,all} \quad k \in \mathbb{N} \\
\phi_{22}(k\,r_{i_{0}})=\phi_{12}(k\,r_{i_{0}}) \quad \mbox{for\,\,all} \quad k \in \mathbb{N}.
\end{cases}
$$
b) If we suppose moreover that $ F $ satisfies $ (D+)\,ii)$ if $ c > 0 $ or $ (D-)\,ii)$ if $ c < 0 $ then
$$\begin{cases}
\phi_{21}(x)=\phi_{11}(x) \quad \mbox{for\,\,all} \quad x\in \mathbb{R} \\
\phi_{22}(x)=\phi_{12}(x) \quad \mbox{for\,\,all} \quad x \in \mathbb{R}. 
\end{cases}
$$
\end{lem}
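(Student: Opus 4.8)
The plan is to bootstrap from the Half Strong Maximum Principle (Lemma \ref{lemma 9}) and then use the shift structure of the equation together with the strict monotonicity hypotheses to propagate the equality across all of $\R$. Throughout I treat only the case $c>0$; the case $c<0$ is obtained by the symmetric argument (equivalently, by the transformation of Lemma \ref{lemma 8}), and the case $c=0$ will need a separate remark since the equation becomes an algebraic relation a.e. Set $w_1=\phi_{21}-\phi_{11}\ge 0$ and $w_2=\phi_{22}-\phi_{12}\ge 0$, and recall from \eqref{eq 62} that $(w_1,w_2)$ satisfies, in the viscosity sense,
\begin{equation*}
\left\{\begin{array}{lll}
c\,w_1'(x)&\ge&\alpha_0(w_2(x)-w_1(x)),\\
c\,w_2'(x)&\ge&2\big(F((\phi_{21}(x+r_i))_{i})-F((\phi_{11}(x+r_i))_{i})\big)+\alpha_0(w_1(x)-w_2(x)).
\end{array}\right.
\end{equation*}

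\textbf{Step 1: part a).} Since $w_1(0)=w_2(0)=0$, Lemma \ref{lemma 9} gives $w_1\equiv w_2\equiv 0$ on $(-\infty,0]$. Now suppose $F$ is increasing in $X_{i_0}$ for some $i_0\ne 0$. I will show that $w_1(r_{i_0})=w_2(r_{i_0})=0$, and then iterate. Note $r_{i_0}\le 0$ (so this is within the region already known to be zero if $r_{i_0}<0$ — so part a) is only meaningful when combined with the shift mechanism; the real content is that if the equalities hold at a point $x_1$, then evaluating the $w_2$-inequality at a minimum point and using that $F$ is non-decreasing in the other slots, one gets
\[
0\ge 2\big(F(\dots,\phi_{21}(x_1+r_{i_0}),\dots)-F(\dots,\phi_{11}(x_1+r_{i_0}),\dots)\big)\ge 0,
\]
and strict monotonicity in slot $i_0$ forces $\phi_{21}(x_1+r_{i_0})=\phi_{11}(x_1+r_{i_0})$, hence $w_1(x_1+r_{i_0})=0$; then Lemma \ref{lemma 9} applied with base point $x_1+r_{i_0}$ propagates equality on $(-\infty,x_1+r_{i_0}]$, and in particular gives $w_2(x_1+r_{i_0})=0$ as well. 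Starting from $x_1=0$ and iterating yields $w_1(kr_{i_0})=w_2(kr_{i_0})=0$ for all $k\in\N$. The point I must be careful about is making the "evaluate at a minimum of $w_2$" argument rigorous in the viscosity framework: $w_2$ is not a priori continuous, so I should work with $(w_2)_*$, note that $x_1$ is a global minimum of $w_2$ over $\R$ once we know $w_2\ge 0$ and $w_2(x_1)=0$, and use the super-solution property there; this is exactly the type of computation already performed in the proof of Lemma \ref{lem:2}.

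\textbf{Step 2: part b).} Assume now $(D+)\,ii)$: $F$ is increasing in $X_{i^+}$ with $r_{i^+}>0$. By Step 1 (with $i_0=i^+$, $r_{i_0}>0$ now allowed) the same minimum-point argument shows: whenever $w_1=w_2=0$ at a point $x_1$, then $w_1(x_1+r_{i^+})=0$, and then Lemma \ref{lemma 9} based at $x_1+r_{i^+}$ gives $w_1\equiv w_2\equiv 0$ on $(-\infty,\,x_1+r_{i^+}]$. Starting from $x_1=0$ (where $w_1=w_2=0$) and iterating, we learn $w_1\equiv w_2\equiv 0$ on $(-\infty,\,n\,r_{i^+}]$ for every $n\in\N$; since $r_{i^+}>0$, the union of these half-lines is all of $\R$, giving $\phi_{21}\equiv\phi_{11}$ and $\phi_{22}\equiv\phi_{12}$ on $\R$. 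For $(D-)\,ii)$ with $c<0$ one runs the mirror argument (Lemma \ref{lemma 9} then propagates equality to the right, and $r_{i^-}<0$ moves the base point left, so the half-lines $[\,n\,r_{i^-},+\infty)$ exhaust $\R$).

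\textbf{Main obstacle.} The delicate point is the interplay between the one-sided conclusion of Lemma \ref{lemma 9} (equality only on a half-line determined by the sign of $c$) and the direction in which the discrete shift $r_{i^+}$ (resp. $r_{i^-}$) moves the base point: one needs the shift to move the base point in the \emph{opposite} direction to the half-line already controlled, so that iterating genuinely enlarges the region — this is precisely why the hypothesis couples the sign of $c$ with the sign of the shift in the strictly monotone slot. Making the "$F$-monotonicity at a zero-minimum of $w_2$" step fully rigorous for merely semicontinuous viscosity solutions (passing to $(w_2)_*$ and invoking the super-solution inequality with a constant test function, exactly as in the proof of Lemma \ref{lem:2}) is the other piece requiring care; everything else is iteration of Lemma \ref{lemma 9}.
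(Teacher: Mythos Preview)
Your overall strategy --- the minimum-point argument on $w_2$ to force $\phi_{21}(x_1+r_{i_0})=\phi_{11}(x_1+r_{i_0})$, then Lemma~\ref{lemma 9} to propagate, then iterate --- is exactly the route the paper takes. But there is a genuine circularity in your invocation of Lemma~\ref{lemma 9}. That lemma requires \emph{both} contact conditions $\phi_{21}=\phi_{11}$ and $\phi_{22}=\phi_{12}$ at the base point, whereas after the minimum-point argument on $w_2$ you have only established $w_1(x_1+r_{i_0})=0$. You then write ``Lemma~\ref{lemma 9} applied with base point $x_1+r_{i_0}$ \ldots\ in particular gives $w_2(x_1+r_{i_0})=0$ as well'', which is exactly what you would need as a \emph{hypothesis} to apply it. The same gap recurs verbatim in Step~2.

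The missing step is one line, and it is what the paper actually does: once $w_1(x_1+r_{i_0})=0$, the point $x_1+r_{i_0}$ is a global minimum of $w_1\ge 0$, so testing the \emph{first} inequality $c\,w_1'\ge\alpha_0(w_2-w_1)$ with a constant test function there yields $0\ge\alpha_0\,w_2(x_1+r_{i_0})$, hence $w_2(x_1+r_{i_0})=0$. Now both contacts hold at $x_1+r_{i_0}$ and you may legitimately iterate (and, for part~b), invoke Lemma~\ref{lemma 9} there).

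Two smaller remarks. First, the sentence ``Note $r_{i_0}\le 0$'' is simply wrong --- part~a) carries no sign hypothesis on $r_{i_0}$ --- and your own parenthetical shows you noticed this; just delete it. Second, the paper proves part~a) \emph{without} Lemma~\ref{lemma 9} at all: the minimum-point argument on $w_2$ gives $w_1(r_{i_0})=0$, then the first equation at that new minimum of $w_1$ gives $w_2(r_{i_0})=0$, and one inducts. This makes a) valid regardless of the sign (or vanishing) of $c$, whereas your route through Lemma~\ref{lemma 9} ties it to $c>0$ from the outset. That is harmless for the application to b), but it is cleaner and it removes the need for your separate remark about $c=0$.
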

\begin{proof}
a) We assume for simplicity of notation that $ i_{0}=1 $. As in the proof of Lemma \ref{lemma 9}, we define $ w_{1}(x)=\phi_{21}(x)-\phi_{11}(x) $ and
 $ w_{2}(x)=\phi_{22}(x)-\phi_{12}(x) $ which satisfy
\begin{equation}\label{eq 65}
\left\lbrace
\begin{array}{lcl}
c\,w_{1}'(x) \ge \alpha_{0}(w_{2}(x)-w_{1}(x))\\
c\,w_{2}'(x) \ge 2(F((\phi_{21}(x+r_{i}))_{i=0,...,N})-F((\phi_{11}(x+r_{i}))_{i=0,...,N})) + \alpha_{0}(w_{1}(x)-w_{2}(x)).
 \end{array}
\right.
\end{equation}
Using that $w_{1}(0)=0, w_{2}(0)=0$ and $ w_{1}, w_{2} \ge 0$ on $ \mathbb{R}$ (hence $0$ is a point of minimum of $w_1$ and $w_2$), we deduce that 
$$\begin{cases}
 0 \ge \alpha_{0}(w_{2}(0)-w_{1}(0))\\
0 \ge 2(F((\phi_{21}( r_{i}))_{i=0,...,N})-F((\phi_{11}(r_{i}))_{i=0,...,N})) + \alpha_{0}(w_{1}(0)-w_{2}(0)).
\end{cases}$$
Thus 
$$ 0 \ge 2\left(F((\phi_{21}( r_{i}))_{i=0,...,N})-F((\phi_{11}(r_{i}))_{i=0,...,N})\right). $$
Using the fact that $ \phi_{21}(0)= \phi_{11}(0) $ and that $ F $  is monotone w.r.t. $ X_{i} $ for all $ i \neq 0 $, we get 
$$ F((\phi_{21}(r_{i}))_{i=0,...,N})=F((\phi_{11}(r_{i}))_{i=0,...,N}).$$
Since $ F $ is increasing w.r.t. $ X_{1} $, we deduce that 
$$ \phi_{21}(r_{1})=\phi_{11}(r_{1}), $$
i.e. $w_1(r_1)=0$. Hence $r_1$ is a point of minimum of $w_1$. The first equation of \eqref{eq 65} then implies
$$0\ge w_2(r_1)-w_1(r_1)=w_2(r_1).$$
Since $w_2\ge0$, we deduce that $w_2(r_1)=0$, i.e. 
$$\phi_{22}(r_{1})=\phi_{12}(r_{1}).$$
Repeating the above argument replacing $0$ by $r_{1}$, we get that
$$ \phi_{21}(k\,r_{1})=\phi_{11}(k\,r_{1}) \quad {\rm and}\quad \phi_{22}(k\,r_{1})=\phi_{12}(k\,r_{1}) \quad \mbox{for\,\,all} \quad k\,\in\,\mathbb{N}. $$
\medskip

\noindent b) We assume that $ c > 0 $ and that $ F $ satisfies  $ (D+) $ $ ii) $ (the other case where $ c < 0 $ being similar). 
By contradiction, we suppose that there exists $ x \in \mathbb{R} $, such that 
$$
\phi_{21}(x) > \phi_{11}(x) \quad{\rm or}\quad
\phi_{22}(x) >\phi_{12}(x) .$$ 
Let $ k \in \mathbb{N} $ big enough such that  $ k\,r_{i_{+}} > x  $. Using Lemma \ref{lemma 9}, and the fact 
that 
$$\begin{cases}
 \phi_{11}(k\,r_{i_{+}})=\phi_{21}(k\,r_{i_{+}}) \\
 \phi_{12}(k\,r_{i_{+}})=\phi_{22}(k\,r_{i_{+}}). \\
 \end{cases}$$
We get that 
$$ \begin{cases}
\phi_{21}(x)=\phi_{11}(x) \\
\phi_{22}(x)=\phi_{12}(x).
 \end{cases}$$
which is a contradiction.

\end{proof}

 \begin{lem}[Comparison Principle under $ (D \pm) $ $i)$]\label{Lem 12}
We assume that $ c > 0 $ (resp. $ c< 0 $) and let $ F $satisfying $ (A) $ and  $ (D+) $ $ i) $ (resp. $ (D-) $ $ i) $).
Let $ (\phi_{11}, \phi_{12}) $ and $ (\phi_{21}, \phi_{22}) $ be two solutions of (\ref{eq 5}), with $\phi_{11},\phi_{12},\phi_{21},\phi_{22}:\R\to [0,1]$. We assume that $\phi_{11}$, $\phi_{21}\in C^2$ and $\phi_{12}$, $\phi_{22}\in C^1$ and that
$$ \phi_{11}(0)=\phi_{21}(0)\quad {\rm and}\quad \phi_{12}(0)=\phi_{22}(0).$$
Suppose moreover that
$$\begin{cases}
   \phi_{21}(x) \ge \phi_{11}(x) \quad \mbox{on} \quad [-r^{*},0] \quad (\mbox{resp.\,\,on}\,\,[0, r^{*}] )\\
   \phi_{22}(x) \ge \phi_{12}(x) \quad \mbox{on} \quad [-r^{*},0] \quad (\mbox{resp.\,\,on}\,\,[0, r^{*}] )
  \end{cases}$$
then  
$$\begin{cases}
   \phi_{21}(x) \ge \phi_{11}(x) \quad \mbox{ for\,all} \quad x\,\ge\,-r^{*} \,\,(\mbox{resp.}\,\,x\,\le\,r^{*} ) \\
    \phi_{22}(x) \ge \phi_{12}(x) \quad \mbox{for \,all} \quad x\,\ge\,-r^{*} \,\,(\mbox{resp.}\,\,x\,\le\,r^{*}).\\
  \end{cases}$$
\end{lem}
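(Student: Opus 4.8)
The plan is to treat the case $c>0$ (the case $c<0$ follows by the reflection $x\mapsto -x$, using the transformation of Lemma \ref{lemma 8}). The key observation is that under $(D+)\,i)$, all shifts $r_i$ are nonpositive, so the nonlocal term $F((\phi_1(x+r_i))_{i=0,\dots,N})$ only looks ``backwards'' — it depends on values of $\phi_1$ at points $\le x$. This makes a step-by-step continuation argument to the right feasible. Set $w_1=\phi_{21}-\phi_{11}$ and $w_2=\phi_{22}-\phi_{12}$; subtracting the equations and using the monotonicity of $F$ in $X_i$ for $i\neq 0$ exactly as in the proof of Lemma \ref{lemma 9}, one gets that $w=w_1+w_2$ satisfies a differential inequality of the form $c\,w'(x)\ge -2L\,w_1(x)+(\text{terms controlled by }w)$, but now the ``error'' coming from the $X_0$-slot of $F$ involves only $w_1$ at the \emph{same} point $x$ (not future points), because all $r_i\le 0$ and $\phi_{21}\ge\phi_{11}$ on an interval to the left.

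First I would argue on $[-r^*,0]$: by hypothesis $w_1,w_2\ge 0$ there, so the differential inequality $w'\ge -\frac{2L}{c}w$ together with $w(-r^*)\ge 0$ (and in fact the Gronwall-type bound $w(x)\ge w(-r^*)e^{-\frac{2L}{c}(x+r^*)}$) shows nothing new is needed — the point is that $w\ge0$ is \emph{propagated} to the right. Next, I would run a continuation: suppose $\phi_{21}\ge\phi_{11}$ and $\phi_{22}\ge\phi_{12}$ on $(-\infty,x_1]$ (or $[-r^*,x_1]$) for some $x_1\ge 0$; then for $x$ slightly larger than $x_1$, the arguments $\phi_{j1}(x+r_i)$ with $r_i\le 0$ lie in the region where the inequality $w_1\ge0$ already holds, so using the monotonicity of $F$ in the slots $i\neq 0$ and the Lipschitz bound in the slot $0$, the function $w=w_1+w_2$ again satisfies $c\,w'(x)\ge -2L\,w_1(x)\ge -2L\,w(x)$ on that slightly larger interval, hence $w(x)\ge w(x_1)e^{-\frac{2L}{c}(x-x_1)}\ge 0$. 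Since $w_1,w_2\ge 0$ individually would then follow from feeding this back into the first equation $c\,w_1'\ge\alpha_0(w_2-w_1)$ (comparing $w_1$ with the solution of the linear ODE), one concludes $w_1,w_2\ge0$ on the larger interval, and the set of $x_1$ for which this holds is open, closed and nonempty, hence all of $[-r^*,+\infty)$.

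The main obstacle is the bookkeeping needed to pass from the single inequality $w\ge 0$ (on the sum) back to the two separate inequalities $w_1\ge 0$ and $w_2\ge 0$: one must use the first equation $c\,w_1'(x)\ge \alpha_0(w_2(x)-w_1(x))$ to show $w_1$ cannot become negative (if $w_1(x_*)=0$ at a first such point, then $w_1'(x_*)\le 0$ forces $w_2(x_*)\le 0$, hence $w(x_*)\le 0$, hence $w(x_*)=0$ and $w_2(x_*)=0$, and then a Gronwall argument on $w$ keeps $w\equiv 0$ to the right, which is consistent and in fact gives equality rather than a contradiction) — so the argument really shows the ordering is preserved, with equality propagating forward only if it already holds. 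The $C^2$/$C^1$ regularity hypothesis on the profiles is what lets us work with genuine derivatives and apply the classical scalar comparison principle for the linear ODE with the exponential sub-solution $y(x)=w(x_1)e^{-\frac{2L}{c}(x-x_1)}$, exactly as in \eqref{eq:1000}; one must double-check that the nonlocal-to-local reduction uses only $\phi_{j1}(x+r_i)$ with $r_i\le 0$, which is precisely assumption $(D+)\,i)$, and that the \emph{strict} monotonicity part $(D+)\,ii)$ is \emph{not} needed here (that is reserved for the uniqueness statement via Lemma \ref{lemma 10}).
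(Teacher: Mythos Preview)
Your plan has a genuine gap, precisely at the place you flag as the ``main obstacle''. In Lemma~\ref{lemma 9} the inequality $cw'\ge -2Lw$ for $w=w_1+w_2$ is legitimate because the global ordering $w_1,w_2\ge 0$ on all of $\R$ is part of the hypothesis, so $|w_1|=w_1\le w$ everywhere. Here that ordering is exactly the conclusion you are after. On the first step interval $(0,\min_{i\ne 0}(-r_i))$ the only thing the backward-looking nonlocal term gives you (using $(D+)\,i)$ and the monotonicity in $X_i$, $i\ne 0$) is $cw'(x)\ge -2L\,|w_1(x)|$, and you cannot replace $|w_1(x)|$ by $w(x)$ without already knowing $w_1(x)\ge 0$ and $w_2(x)\ge 0$ at the current point $x$. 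Your continuation/first-crossing argument is circular for the same reason: at $x=0$ both $w_1$ and $w_2$ vanish, so there is no buffer preventing them from dipping negative immediately, and the Gronwall bound $w(x)\ge w(x_1)e^{-\frac{2L}{c}(x-x_1)}$ is unavailable until after you have established $w_1,w_2\ge 0$ on $(0,x)$.

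The paper breaks this circularity differently and this is where the $C^2$ hypothesis is really used: it eliminates $w_2$ via the first equation, $w_2=w_1+\frac{c}{\alpha_0}w_1'$, to obtain a single \emph{second-order} scalar inequality
\[
\frac{c^2}{\alpha_0}\,w_1''+2c\,w_1' \;\le\; 2L\,|w_1|
\quad\text{on } \bigl[0,\ \min_{i\ne 0}(-r_i)\bigr],
\]
coupled with \emph{two} initial conditions $w_1(0)=0$ and $w_1'(0)=\frac{\alpha_0}{c}(w_2(0)-w_1(0))=0$. Comparison with the trivial solution $y\equiv 0$ of the associated linear ODE then forces $w_1\le 0$ on that interval; once $w_1\le 0$ is known, the bound $w_2\le 0$ follows from a genuine first-order Gronwall on the second equation (using $G(t)=2F(\phi_{21}(x)+t,\dots)+\alpha_0 t$ increasing, i.e.\ \eqref{eq:monV0}), and one iterates. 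So the missing idea in your plan is the reduction to a second-order ODE in $w_1$ with Cauchy data $(w_1(0),w_1'(0))=(0,0)$; the first-order sum trick from Lemma~\ref{lemma 9} does not carry over.
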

\begin{proof}
We assume that $ c >0 $ (the case $c < 0$ being similar). 
We define the functions $ w_{1}(x)=\phi_{11}(x)-\phi_{21}(x) $ and  $ w_{2}(x)=\phi_{12}(x)-\phi_{22}(x) $
which satisfy
\begin{equation}\label{eq:30}\begin{cases}
   c\,w_{1}'(x) = \alpha_{0}(w_{2}(x)-w_{1}(x))\\
   c\,w_{2}'(x) = 2(F((\phi_{11}(x+r_{i}))_{i=0,...,N})-F((\phi_{21}(x+r_{i}))_{i=0,...,N})) + \alpha_{0}(w_{1}(x)-w_{2}(x)).
  \end{cases} 
  \end{equation}
  By the first equation, we then deduce that 
  $$w_2'= w_1'+\frac c {\a_0}w_1''.$$ 
  The second equation then implies that
  $$\frac {c^2} {\a_0}w_1''+2c w_1'= 2(F((\phi_{11}(x+r_{i}))_{i=0,...,N})-F((\phi_{21}(x+r_{i}))_{i=0,...,N})).$$
Since  $ \phi_{11} \le \phi_{21} $  on $ [-r^{*},0] $  and $ r_{i} \le 0 $ for all $ i \neq 0 $,
 then for all $ x \in [0, \underset{i \neq 0}{\min}(-r_{i})]$, we have $ \phi_{11}(x+r_{i}) \le \phi_{21}(x+r_{i})$ for $ i \neq 0$. This implies that
\begin{align*}
\frac {c^2} {\a_0}w_1''+2c w_1'\le& 2(F(\phi_{21}(x)+w_1(x),(\phi_{21}(x+r_{i}))_{i=1,...,N})-F(\phi_{21}(x),(\phi_{21}(x+r_{i}))_{i=1,...,N}))\\
\le& {2\,L}\,\,|w_{1}(x)| 
\end{align*}
where $L$ is the Lipschitz constant of $F$.
Moreover, $w_1(0)=0$, $w_1'(0)=\frac {\alpha_0}c (w_2(0)-w_1(0))=0$ and $ y=0 $ is a solution of $\frac {c^2} {\a_0}y''+2c y'={2L} y $, then using the comparison principle, we deduce that
$$ w_1 \le 0 \quad \mbox{for\,\,all} \quad x \in [0, \min_{i \neq 0}(-r_{i})]$$
i.e.
$$ \phi_{11} \le \phi_{21} \quad \mbox{for\,\,all} \quad x \in [0, \min_{i \neq 0}(-r_{i})].$$
Using the second equation of \eqref{eq:30} and the fact that  $ \phi_{11}(x+r_{i}) \le \phi_{21}(x+r_{i})$ for $ i \neq 0$ for all $ x \in [0, \underset{i \neq 0}{\min}(-r_{i})]$, we deduce that
\begin{align*}
  c\,w_{2}'(x) \le&  2(F(\phi_{21}(x)+w_1(x),(\phi_{21}(x+r_{i}))_{i=1,...,N})-F(\phi_{21}(x),(\phi_{21}(x+r_{i}))_{i=1,...,N}))\\
  & + \alpha_{0}(w_{1}(x)-w_{2}(x))\\
  =&G(w_1(x))-G(0)-\a_0 w_2(x)
  \end{align*}
  where $G(t)=2F(\phi_{21}(x)+t,(\phi_{21}(x+r_{i}))_{i=1,...,N}) +\alpha_0 t$. Using that $G$ is non-decreasing (see \eqref{eq:monV0}) and $w_1(x)\le0$, we deduce that
  $$ c\,w_{2}'(x) \le-\a_0 w_2(x).$$
Using again that $ w_2(0)=0 $ and $ y=0 $ is a solution of $ w'(x)= \frac {-\a_0}c w_2(x) $, we deduce by the comparison principle that
$$ w_2 \le 0 \quad \mbox{for\,\,all} \quad x \in [0, \min_{i \neq 0}(-r_{i})].$$
We repeat the above argument several times, each on the new extended interval.
We deduce that 
$$ \begin{cases}
  \phi_{11} \le \phi_{21}\quad \mbox{for\,\,all}\quad x \ge -\,r^{*} \\
   \phi_{12} \le \phi_{22}\quad \mbox{for\,\,all}\quad x \ge -\,r^{*}.
   \end{cases}$$
\end{proof}
We use Lemma \ref{lemma 9} and Lemma  \ref{Lem 12} in order to prove  the Strong Maximum principle under $(D\pm)$ $ i)$.
\begin{lem}[Strong Maximum Principle under $(D\pm)$ $ i)$]\label{lem 14}
We assume that $ c >0 $ (resp. $ c<0 $) and $ F $ satisfies  $(A)$ and $(D+)\,i) $ (resp. $ (D-)\,i)) $. Let $(\phi_{11}, \phi_{12})$ and $(\phi_{21}, \phi_{22} ) $ 
be respectively a viscosity sub and a super-solution of (\ref{eq 5}), with $\phi_{11},\phi_{12},\phi_{21},\phi_{22}:\R\to [0,1]$. We assume that $\phi_{11}$, $\phi_{21}\in C^2$ and $\phi_{12}$, $\phi_{22}\in C^1$ and that
$$ \begin{cases}
\phi_{21} \ge \phi_{11} \quad \mbox{on} \quad \mathbb{R}\\
\phi_{22} \ge \phi_{12}\quad \mbox{on} \quad\mathbb{R}\\
\phi_{21}(0)=\phi_{11}(0) \\
\phi_{22}(0)=\phi_{12}(0). 
 \end{cases}$$
Then 
$$\begin{cases} 
 \phi_{11}(x)=\phi_{21}(x) \quad \mbox{for\,\,all} \quad x \quad \mbox{in} \quad \mathbb{R}\\
 \phi_{12}(x)=\phi_{22}(x) \quad \mbox{for\,\,all} \quad x \quad \mbox{in} \quad \mathbb{R}.\\
\end{cases}$$
\end{lem}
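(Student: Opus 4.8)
The plan is to splice together the two maximum-principle tools already available: the Half Strong Maximum Principle of Lemma~\ref{lemma 9}, which pins the profiles down on one half-line, and the Comparison Principle under $(D\pm)\,i)$ of Lemma~\ref{Lem 12}, which --- thanks to the one-sidedness of the shifts $r_i$ --- carries the coincidence across the remaining half-line. I treat the case $c>0$ (so that $(D+)\,i)$ is in force); the case $c<0$ is symmetric, exchanging $(D+)$ with $(D-)$ and the half-line $(-\infty,0]$ below with $[0,+\infty)$, and running Lemma~\ref{Lem 12} toward the left instead of the right (equivalently, one may first apply the change of variable $x\mapsto -x$ of Lemma~\ref{lemma 8}).

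\emph{Step 1: the half-line $(-\infty,0]$.} Since $(\phi_{11},\phi_{12})$ is a sub-solution and $(\phi_{21},\phi_{22})$ a super-solution of \eqref{eq 5} with $\phi_{21}\ge\phi_{11}$, $\phi_{22}\ge\phi_{12}$ on $\R$ and $\phi_{21}(0)=\phi_{11}(0)$, $\phi_{22}(0)=\phi_{12}(0)$, Lemma~\ref{lemma 9} (applied with $c>0$) gives $\phi_{11}=\phi_{21}$ and $\phi_{12}=\phi_{22}$ on $(-\infty,0]$. In particular the two profiles coincide on the whole window $[-r^*,0]$, and there the sub-solution $(\phi_{11},\phi_{12})$ is in fact a classical solution of \eqref{eq 5}: it equals the super-solution $(\phi_{21},\phi_{22})$, and since all $r_i\le0$ the delayed arguments $x+r_i$ stay in $(-\infty,0]$, so the two one-sided inequalities it satisfies must both be equalities there.

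\emph{Step 2: crossing $x=0$.} We apply Lemma~\ref{Lem 12} twice. Used for $(\phi_{11},\phi_{12})$, $(\phi_{21},\phi_{22})$ it merely re-proves the already-known bound $\phi_{21}\ge\phi_{11}$, $\phi_{22}\ge\phi_{12}$ on $[-r^*,+\infty)$; the essential application is with the two pairs \emph{interchanged}. The hypotheses of Lemma~\ref{Lem 12} ask only for the ordering between the two pairs on $[-r^*,0]$ together with equality at $x=0$, and by Step~1 that ordering holds in \emph{both} directions on $[-r^*,0]$ while the touching point $x=0$ is unchanged. Hence Lemma~\ref{Lem 12} also yields $\phi_{11}\ge\phi_{21}$ and $\phi_{12}\ge\phi_{22}$ on $[-r^*,+\infty)$. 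Combined with the standing inequalities $\phi_{21}\ge\phi_{11}$, $\phi_{22}\ge\phi_{12}$, this gives $\phi_{11}=\phi_{21}$ and $\phi_{12}=\phi_{22}$ on $[-r^*,+\infty)$; together with Step~1 and $r^*>0$ this proves $\phi_{11}\equiv\phi_{21}$ and $\phi_{12}\equiv\phi_{22}$ on all of $\R$.

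\emph{Where the difficulty lies.} The real content is the passage across $x=0$. Re-running the Half Strong Maximum Principle cannot help beyond $(-\infty,0]$: for $c>0$ it only propagates the positivity of the gap $w=(\phi_{21}-\phi_{11})+(\phi_{22}-\phi_{12})$ \emph{forward} (via a differential inequality of the form $w'\ge -2Lw/c$, with $L$ the Lipschitz constant of $F$), which, starting from $w(0)=0$, says nothing on $(0,+\infty)$ --- a sub/super pair only yields one-sided derivative bounds, all pointing the same way, so $w=0$ at a point does not by itself force $w\equiv0$ to its right. What unlocks the forward step is precisely the structural hypothesis $(D+)\,i)$ that all shifts are $\le0$: this ``causality'' is what lets the ODE-comparison argument behind Lemma~\ref{Lem 12} advance on successive windows of length $\min_{i\ne0}(-r_i)$, each one feeding on the values of the profiles on the previous, already-settled window. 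The only point demanding care is the bookkeeping in Step~1 that legitimises the reversed use of Lemma~\ref{Lem 12} (which is stated for two solutions): once Step~1 is in place the two pairs coincide, hence solve \eqref{eq 5}, on $[-r^*,0]$, so Lemma~\ref{Lem 12} may indeed be invoked with either pair on top.
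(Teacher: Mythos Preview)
Your proof is correct and follows exactly the same two-step strategy as the paper: first apply Lemma~\ref{lemma 9} (Half Strong Maximum Principle) to obtain $\phi_{11}=\phi_{21}$ and $\phi_{12}=\phi_{22}$ on $(-\infty,0]$, then invoke Lemma~\ref{Lem 12} with the roles of the two pairs interchanged to get $\phi_{11}\ge\phi_{21}$, $\phi_{12}\ge\phi_{22}$ on $[-r^*,+\infty)$, which together with the standing inequalities forces equality on all of $\R$. Your additional commentary on why the reversed application of Lemma~\ref{Lem 12} is legitimate (despite that lemma being stated for solutions rather than sub/super-solutions) is more explicit than the paper's own proof, which simply cites Lemma~\ref{Lem 12} without further remark.
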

\begin{proof}Let $ c >0 $ . Using Lemma \ref{lemma 9}, we deduce that 
$$ \begin{cases}
 \phi_{11}=\phi_{21} \quad \mbox{for\,\,all}\quad x \le 0 \\
 \phi_{21}=\phi_{22} \quad \mbox{for\,\,all}\quad x \le 0. 
\end{cases}$$
By Lemma \ref{Lem 12}, we then deduce that 
$$ \begin{cases}
 \phi_{11} \ge \phi_{21}\quad \mbox{for\,\,all} \quad x \ge -r^{*} \\
 \phi_{12} \ge \phi_{22}\quad \mbox{for\,\,all} \quad x \ge -r^{*}.   
\end{cases}$$
which gives the result.

\end{proof}
\begin{lem}[Ordering two solutions of (\ref{eq 8}) up to translation]\label{Lem 15} 
We assume that $ c \neq 0 $ and let $ F: [0,1]^{N+1} \to \mathbb{R} $ satisfiyng $ (A)$ and $ (C) $. Let $(\phi_{11}, \phi_{12})$ and 
 $(\phi_{21}, \phi_{22})$ be respectively a viscosity sub
and super-solution of (\ref{eq 5}), with $\phi_{11},\phi_{12},\phi_{21},\phi_{22}:\R\to [0,1]$. There exists a shift $ a^{*} \in \mathbb{R}$ and some $ x_{0}
 \in [-r^{*}, r^{*}]$ such that  
 $$\begin{cases}
   \phi_{21}^{a^{*}} \ge \phi_{11}\quad \mbox{on} \quad \mathbb{R} \\
     \phi_{22}^{a^{*}} \ge \phi_{21} \quad \mbox{on} \quad \mathbb{R} \\
 \phi_{21}^{a^{*}}(x_{0})= \phi_{11}(x_{0}) \\
   \phi_{22}^{a^{*}}(x_{0})= \phi_{12}(x_{0}),
  \end{cases}$$
 where 
$$\begin{cases}
   \phi_{21}^{a^{*}}(x)=\phi_{21}(x+a^{*}) \\
   \phi_{22}^{a^{*}}(x)=\phi_{22}(x+a^{*}).
  \end{cases}$$
\end{lem}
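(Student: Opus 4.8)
The plan is to argue by the sliding method. Up to translating $(\phi_{11},\phi_{12})$ once and for all, I assume that $\phi_{11},\phi_{12}\le\delta$ on $(-\infty,2r^{*}]$, where $\delta=\min(\beta_{0},1/4)$ and $\beta_{0}$ is the constant of assumption $(C)$ (which, being a weaker requirement for smaller $\beta_0$, may be taken $\le 1/4$). Since $c\neq0$, a viscosity sub- or super-solution of the first equation of \eqref{eq 5} has bounded derivative (its right-hand side being bounded), and the second equation then bounds the other derivative, so $\phi_{11},\phi_{12},\phi_{21},\phi_{22}$ are Lipschitz; in particular all the functions below are continuous. For $a\in\R$ write $\phi_{2j}^{a}=\phi_{2j}(\cdot+a)$ and set
$$\mathcal A=\{a\in\R:\ \phi_{21}^{a}\ge\phi_{11}\ \text{and}\ \phi_{22}^{a}\ge\phi_{12}\ \text{on }\R\}.$$

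First I would show that $\mathcal A$ is non-empty and bounded from below. For $a$ large, $\phi_{2j}(+\infty)=1$ gives $\phi_{2j}^{a}\ge 1-\beta_{0}$ on $[-2r^{*},+\infty)$ and $\phi_{2j}^{a}\ge\delta\ge\phi_{1j}$ on $[-r^{*},r^{*}]$; Theorem~\ref{th 2} applied on $(-\infty,r^{*}]$ (its hypothesis $\phi_{1j}\le\beta_{0}$ on $(-\infty,2r^{*}]$ being precisely the normalization) together with Corollary~\ref{cor 2} applied on $[-r^{*},+\infty)$ then propagate the ordering to all of $\R$, so $a\in\mathcal A$. For the lower bound, if $a\to-\infty$ then $\phi_{21}^{a}(x)\to0$ for each fixed $x$, while $\phi_{11}(x)>0$ once $x$ is large (as $\phi_{11}(+\infty)=1$), contradicting $\phi_{21}^{a}\ge\phi_{11}$. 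Hence $a^{*}:=\inf\mathcal A\in\R$, and passing to the limit $a\downarrow a^{*}$ (by continuity of the profiles) gives $a^{*}\in\mathcal A$, i.e.\ $\phi_{2j}^{a^{*}}\ge\phi_{1j}$ on $\R$ for $j=1,2$.

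The core of the proof is to produce a contact point inside $[-r^{*},r^{*}]$. Assume for contradiction that $\phi_{2j}^{a^{*}}>\phi_{1j}$ on $[-r^{*},r^{*}]$ for both $j$. By continuity and compactness there is $\eta>0$ with $\phi_{2j}^{a^{*}}-\phi_{1j}\ge\eta$ on $[-r^{*},r^{*}]$, whence, by uniform continuity of the $\phi_{2j}$, the inequality $\phi_{2j}^{a}\ge\phi_{1j}$ survives on $[-r^{*},r^{*}]$ for all $a$ in a one-sided neighbourhood of $a^{*}$. For such $a<a^{*}$ I would invoke the left-tail comparison Theorem~\ref{th 2} (whose hypothesis does not depend on $a$) and the right-tail comparison Corollary~\ref{cor 2} (applied on a half-line $[z-r^{*},+\infty)$ with $z$ chosen large enough that $\phi_{2j}^{a}\ge1-\beta_{0}$ there, which is legitimate since $\phi_{2j}(+\infty)=1$) to push the order from $[-r^{*},r^{*}]$ onto all of $\R$; this yields $a\in\mathcal A$ with $a<a^{*}$, a contradiction. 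The one configuration not immediately excluded this way is that the only contact at $a^{*}$ sits strictly to the left of $-r^{*}$ (for $c>0$; strictly to the right of $r^{*}$ for $c<0$); by the Half Strong Maximum Principle Lemma~\ref{lemma 9} such a contact forces $\phi_{1j}\equiv\phi_{2j}^{a^{*}}$ on a whole half-line, and one rules it out using that both profiles solve \eqref{eq 5} with the same nonlocal coupling, which spreads the coincidence to the entire line (making every point of $[-r^{*},r^{*}]$ a contact point). In either case we obtain $x_{0}\in[-r^{*},r^{*}]$ and $j$ with $\phi_{2j}^{a^{*}}(x_{0})=\phi_{1j}(x_{0})$; since $x_{0}$ is then a minimum of $w_{j}:=\phi_{2j}^{a^{*}}-\phi_{1j}\ge0$, inserting it into \eqref{eq 5} (the first equation if $j=2$; the second together with $F$ non-decreasing in $X_{i}$, $i\neq0$, and the strict monotonicity \eqref{eq:monV0} if $j=1$) forces $w_{1}(x_{0})=w_{2}(x_{0})=0$, so the contact is simultaneous and $(a^{*},x_{0})$ is as required.

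I expect the genuinely delicate step to be this localization of the contact point: one must keep the hypotheses of Theorem~\ref{th 2} and Corollary~\ref{cor 2} valid for shifts slightly below $a^{*}$, and must use Lemma~\ref{lemma 9} (together with the structure of the nonlocal coupling) to exclude the scenario where the only contact at the extremal shift hides in the far left (or right) tail. By contrast, the a priori Lipschitz regularity, the passage to the infimum $a^{*}$, and the simultaneity of the two contacts are routine consequences of the results already at our disposal.
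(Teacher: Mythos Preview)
Your sliding-method skeleton is the same as the paper's, and your final ``simultaneous contact'' argument (contact in one component forces contact in the other via the coupling and the strict monotonicity \eqref{eq:monV0}) is exactly the paper's core step---the paper phrases it as defining two separate optimal shifts $a_1^{*},a_2^{*}$ and proving $a_1^{*}=a_2^{*}$. One minor slip: you have the equation indices swapped. If the contact is in $w_1$ you use the \emph{first} equation at the minimum to get $w_2(x_0)\le 0$; if the contact is in $w_2$ you use the \emph{second} equation together with the monotonicity of $G(y)=2F(\phi_{11}(x_0)+y,\dots)+\alpha_0 y$ to get $w_1(x_0)\le 0$.

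There are, however, two genuine gaps in your localization of $x_0$ in $[-r^{*},r^{*}]$. First, when you invoke Corollary~\ref{cor 2} on a shifted half-line $[z-r^{*},+\infty)$, its hypothesis is that the ordering $\phi_{2j}^{a}\ge\phi_{1j}$ already holds on the boundary strip $[z-r^{*},z]$, not on $[-r^{*},r^{*}]$. You have only secured the order on $[-r^{*},r^{*}]$, so the intermediate region $(r^{*},z-r^{*})$ is unaccounted for and the comparison cannot be launched at $z$. Second, your fallback for a far-left contact---use Lemma~\ref{lemma 9} to get equality on a left half-line and then ``spread the coincidence to the entire line''---does not follow from the hypotheses of the present lemma: the nonlocal terms $\phi_{1}(x+r_i)$ with $r_i>0$ look forward, so equality on $(-\infty,x_0]$ does not propagate to $x>x_0$ without an assumption like $(D+)\,i)$ (all $r_i\le 0$) or $(D+)\,ii)$, which Lemma~\ref{Lem 15} does not assume. (Also, $(\phi_{11},\phi_{12})$ and $(\phi_{21},\phi_{22})$ are only sub- and super-solutions here, not both solutions.)

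The paper's proof, for what it's worth, also leaves the localization $x_0\in[-r^{*},r^{*}]$ largely implicit; its substantive content is the $a_1^{*}=a_2^{*}$ argument, which is the part you do have.
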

\begin{proof}
The idea of the proof is to translate $ (\phi_{21}, \phi_{22}) $ and then to compare it with $ (\phi_{11}, \phi_{12}) $.

\paragraph{Step 1: Family of solutions above $(\phi_{11}, \phi_{12}) $.} 
For  $ a \in \mathbb{R} $, we define
$$ \begin{cases}
   \phi_{21}^{a}(x)=\phi_{21}(x+a) \\
   \phi_{22}^{a}(x)=\phi_{22}(x+a). 
   \end{cases}$$
For some $ a > 0 $ large enough, we have 
$$ \begin{cases}
   \phi_{21}^{\bar{a}} \ge \phi_{11} \quad \mbox{on} \quad [-2r^{*}, 2r^{*}] \quad \mbox{for\,\,all} \quad \bar{a} > a \\
   \phi_{22}^{\bar{a}} \ge \phi_{12} \quad \mbox{on} \quad [-2r^{*}, 2r^{*}] \quad \mbox{for\,\,all} \quad \bar{a} > a. \\
   \end{cases}$$
 Using the comparison principle (Theorem \ref{th 2} and Corollary \ref{cor 2} ),  we deduce that for all $ \bar{a} \ge a $, we have 
$$\begin{cases}
      \phi_{21}^{\bar{a}} \ge \phi_{11} \quad \mbox{on} \quad \mathbb{R} \\
      \phi_{22}^{\bar{a}} \ge \phi_{12} \quad \mbox{on} \quad \mathbb{R}.
  \end{cases}$$
  
\paragraph{Step 2: There exists $ a^{*} $ such that $ \phi_{21}^{a^{*}} $ and $\phi_{11}$ touch at 
 $ x_{0} \in [-r^{*}, r^{*}] $ , $ \phi_{22}^{a^{*}} $ and $\phi_{12}$ touch at $ x_{0} \in [-r^{*}, r^{*}] $.} 
Let 
$$ \begin{cases}
 a^{*}_{1}=\inf \{a\,\,\in\,\,\mathbb{R}, \,\phi_{21}^{ \bar{a} }\,\ge\,\phi_{11}\quad\mbox{on}\quad \mathbb{R} \quad\mbox{for\,\,all}\quad \bar{a} \,\ge\,a \}  \\ 
 a^{*}_{2}=\inf \{a\,\,\in\,\,\mathbb{R}, \,\phi_{22}^{ \bar{a} }\,\ge\,\phi_{12}\quad\mbox{on}\quad \mathbb{R} \quad\mbox{for\,\,all}\quad \bar{a} \,\ge\, a  \} . 
   \end{cases}$$
We set $ a^{*}=max(a^{*}_{1}, a^{*}_{2}) $. 
We define $ k_{1}(x)=\phi_{21}^{a^{*}}(x)-\phi_{11}(x) $ and
 $ k_{2}(x)=\phi_{22}^{a^{*}}(x)-\phi_{12}(x) $ which satisfy
 \begin{equation}\label{eq 65bis}
\left\lbrace
\begin{array}{lcl}
c\,k_{1}'(x) \ge \alpha_{0}(k_{2}(x)-k_{1}(x))\\
c\,k_{2}'(x) \ge 2(F((\phi_{21}^{a^{*}}(x+r_{i}))_{i=0,...,N})-F((\phi_{11}(x+r_{i}))_{i=0,...,N})) + \alpha_{0}(k_{1}(x)-k_{2}(x)).
 \end{array}
\right.
\end{equation}
We now prove that $a_2^*=a_1^*$. By contradiction, assume that $a_2^*\ne a_1^*$
If $ a_{2}^{*} > a_{1}^{*} $, we have
$$\begin{cases}
 \phi_{21}^{a^{*}}(x) > \phi_{11}(x) \\
 \phi_{22}^{a^{*}}(x) \ge\phi_{12}(x) \\
 \phi_{22}^{a^{*}}(x_0) =\phi_{12}(x_0). 
  \end{cases} $$
Since F is non-decreasing in $ X_{i} $ for $ i \neq 0$, we have
\begin{align*}
F((\phi_{21}^{a^{*}}(x+r_{i}))_{i=0,...,N}) \ge& F(\phi_{21}^{a^{*}}(x),(\phi_{11}(x+r_{i}))_{i=1,...,N})\\
 \ge& F(\phi_{11}(x)+k_{1}(x),(\phi_{11}(x+r_{i}))_{i=1,...,N})
 \end{align*}
Thus (since $x_0$ is a point of minimum of $k_2$ and $k_2(x_0)=0$)
\begin{equation}\label{eq 66}
 0 \ge 2(F(\phi_{11}(x_0)+k_{1}(x_0),(\phi_{11}(x_0+r_{i}))_{i=1,...,N})-F((\phi_{11}(x_0+r_{i}))_{i=0,...,N})) + \alpha_{0}k_{1}(x_0)
\end{equation}
We define $ G:\mathbb{R} \to \mathbb{R}$ by
 \begin{equation}\label{eq 67}
  G(y)=2\,F(\phi_{11}(x_{0})+y,(\phi_{11}(x_{0}+r_{i}))_{i=1...N})+\alpha_{0}\,y.
 \end{equation}
 Then
 $$0  \ge (G(k_{1}(x_{0})-G(0)).$$
 But $ k_{1}(x_{0}) > 0 $ and, by assumption \eqref{eq:monV0},  $ G'(y)>0 $, so we get a contradiction.\medskip
 
If $ a_{1}^{*} > a_{2}^{*} $, then 
$$\begin{cases}
 \phi_{22}^{a^{*}}(x) >\phi_{12}(x) \\
 \phi_{21}^{a^{*}}(x) \ge \phi_{11}(x)  \\
 \phi_{21}^{a^{*}}(x_0) = \phi_{11}(x_0)  .
  \end{cases} $$
Since $ k_{1}(x_{0})=0$ and $ k_{1}(x)\ge0$, we deduce that $x_0$ is a point of minimum of $ k_{1}$ and so by the first equation of (\ref{eq 65bis}), we get
$$
0 \ge \alpha_{0}\,k_{2}(x_{0}) >0
$$
wich is a contradiction.\medskip

Thus $ a_{1}^{*}=a_{2}^{*} =a^*$  and so
$$\begin{cases}
 \phi_{21}^{a^{*}}(x) \ge \phi_{11}(x)  \\
 \phi_{22}^{a^{*}}(x) \ge\phi_{12}(x) \\
   \phi_{21}^{a^{*}}(x_{0})= \phi_{11}(x_{0}) \\
   \phi_{22}^{a^{*}}(x_{0})= \phi_{12}(x_{0}).
  \end{cases}$$

\end{proof}
\begin{lem}[Monotonicity of the profiles]\label{le20}
Assume that $c>0$ (resp. $c<0$) and let $F:[0,1]^{N+1}\to\R$ satisfying $(A)$,  $(C)$ and $(D+)$ $i)$ or $ii)$.
Let $\phi_1,\;\phi_2:\R\to[0,1]$ be a solution of (\ref{eq 5}). Then $\phi_1$ and $\phi_2$ are increasing on $\R.$

\end{lem}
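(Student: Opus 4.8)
The idea is to argue by contradiction, combining the strong maximum principles of Lemmas~\ref{lemma 10} and~\ref{lem 14} with the elementary fact that a non‑constant periodic function cannot have limits $0$ at $-\infty$ and $1$ at $+\infty$. I treat the case $c>0$, the case $c<0$ being symmetric (replace $(D+)$ by $(D-)$ and use the $c<0$ versions of the cited lemmas). Since $c\neq 0$ the solution is classical, and bootstrapping \eqref{eq 5} gives $\phi_1\in C^2$, $\phi_2\in C^1$ with bounded derivatives: the first equation yields $|\phi_1'|\le\alpha_0/|c|$, so the right‑hand side of the second equation is Lipschitz in $x$ and $\phi_2\in C^{1,1}$, whence $\phi_1'=\tfrac{\alpha_0}{c}(\phi_2-\phi_1)\in C^1$; in particular the regularity hypotheses of Lemma~\ref{lem 14} are satisfied. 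I also recall that $\phi_1$ and $\phi_2$ are non‑decreasing (if this is not granted a priori, it follows by applying Lemma~\ref{Lem 15} with $(\phi_1,\phi_2)$ in both slots — legitimate, a solution being both a sub‑ and a super‑solution — and then the argument below, which forces the resulting shift $a^\ast$ to be $\le 0$ and hence $\phi_i(\cdot+h)\ge\phi_i$ for every $h>0$).

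Fix $h>0$ and set $w_i=\phi_i(\cdot+h)-\phi_i\ge 0$. Subtracting \eqref{eq 5} from its $h$‑translate, $(w_1,w_2)$ satisfies the exact identities
\[
c\,w_1'(x)=\alpha_0(w_2(x)-w_1(x)),\qquad
c\,w_2'(x)=2\big[F((\phi_1(x+h+r_i))_{i=0,\dots,N})-F((\phi_1(x+r_i))_{i=0,\dots,N})\big]+\alpha_0(w_1(x)-w_2(x)).
\]
Suppose first that $w_1(x_1)=0$ for some $x_1\in\R$. Then $x_1$ is a global minimum of $w_1$, so $w_1'(x_1)=0$, and the first identity gives $w_2(x_1)=0$. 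Suppose instead that $w_2(x_1)=0$ for some $x_1$. Then $w_2'(x_1)=0$, and the second identity, together with the monotonicity of $F$ in the variables of index $\neq 0$ (used to lower those arguments in the larger term, recall $w_1\ge 0$), yields
\[
0\ \ge\ 2\big[F\big(\phi_1(x_1)+w_1(x_1),(\phi_1(x_1+r_i))_{i=1,\dots,N}\big)-F\big((\phi_1(x_1+r_i))_{i=0,\dots,N}\big)\big]+\alpha_0\,w_1(x_1)=G(w_1(x_1))-G(0),
\]
where $G(t)=2F\big(\phi_1(x_1)+t,(\phi_1(x_1+r_i))_{i=1,\dots,N}\big)+\alpha_0 t$ is strictly increasing by \eqref{eq:monV0}; since $w_1(x_1)\ge 0$ this forces $w_1(x_1)=0$, and then $w_2(x_1)=0$ as above. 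In either case $w_1$ and $w_2$ vanish simultaneously at $x_1$ while staying $\ge 0$ on $\R$; translating so that $x_1=0$, Lemma~\ref{lem 14} (under $(D+)\,i)$) or Lemma~\ref{lemma 10}\,b) (under $(D+)\,ii)$) gives $w_1\equiv w_2\equiv 0$, i.e. $\phi_i(\cdot+h)\equiv\phi_i$. Thus each $\phi_i$ is $h$‑periodic, which contradicts $\phi_i(-\infty)=0\neq 1=\phi_i(+\infty)$; hence $w_1>0$ and $w_2>0$ on $\R$.

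Since $h>0$ was arbitrary, $\phi_i(x+h)>\phi_i(x)$ for all $x\in\R$ and all $h>0$, which is precisely the strict monotonicity of $\phi_1$ and $\phi_2$. The main obstacle is forcing \emph{both} $w_1$ and $w_2$ to vanish at the common point $x_1$, so that the strong maximum principles apply: this is immediate when $w_1$ vanishes, but when only $w_2$ is known to vanish it relies essentially on the strict sign condition \eqref{eq:monV0} on $2\,\partial F/\partial X_0+\alpha_0$ (via the strictly increasing auxiliary function $G$). The remaining ingredients — classical regularity of the profiles, translation invariance of the maximum principles, and the reduction to non‑decreasing profiles — are routine.
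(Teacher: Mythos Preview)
Your argument is correct and follows essentially the same strategy as the paper: first reduce to non-decreasing profiles via a sliding/comparison argument, then use the strong maximum principle (Lemma~\ref{lemma 10} or~\ref{lem 14}) to rule out any contact point of $\phi_i(\cdot+h)$ with $\phi_i$, since contact would force $h$-periodicity and contradict the boundary conditions.

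Two minor remarks on presentation. First, your parenthetical reduction to non-decreasingness leans on the \emph{proof} of Lemma~\ref{Lem 15} (the infimum structure of $a^\ast$) rather than its statement: applied with $(\phi_1,\phi_2)$ in both slots, the statement is trivially satisfied by $a^\ast=0$, so the implication ``$a^\ast\le 0\ \Rightarrow\ \phi_i(\cdot+h)\ge\phi_i$ for all $h>0$'' requires knowing that the $a^\ast$ produced there is the infimum of admissible shifts. The paper avoids this by redoing the sliding argument explicitly (its Step~1), invoking the half-line comparison principles (Theorem~\ref{th 2} and Corollary~\ref{cor 2}) directly. Second, your treatment of the ``both components touch'' step via the strictly increasing auxiliary $G$ is actually more explicit than the paper's Step~2, which merely cites ``as in the proof of Lemma~\ref{Lem 15}''. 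So the two proofs trade places in where they spell out details, but the underlying mechanism is identical.
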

\begin{proof}
Assume that $c>0$ (the proof when $c<0$ being similar) and let $(\phi_1,\phi_2)$ be a solution of (\ref{eq 5}).

\paragraph{Step $1$: $(\phi_1,\phi_2)$ are non-decreasing.}
The goal is to show that $\phi_1(x+a)\geq\phi_1(x)$  and $\phi_2(x+a)\geq\phi_2(x)$ for all $a\geq0.$ As in the proof of Lemma \ref{Lem 15}, we
deduce that for $a\geq0$ large enough and for all $\overline{a}\geq a,$ we have
$$\phi_1^{\overline{a}}(x):=\phi_1(x+\overline{a})\geq\phi_1(x)
\quad{\rm and}\quad\phi_2^{\overline{a}}(x):=\phi_2(x+\overline{a})\geq\phi_2(x)
\quad\mbox{on}\quad[-2r^{*},2r^{*}].$$
Thus using the comparison principle (Theorem \ref{th 2} and Corollary \ref{cor 2}),
we deduce that for all $\overline{a}\geq a,$ we have 
$$\phi_1^{\overline{a}}(x)\geq\phi_1(x)
\quad {\rm and}\quad \phi_1^{\overline{a}}(x)\geq\phi_1(x)
\quad\mbox{on}\quad\R.$$
Let
$$ \begin{cases}
 a^{*}_{1}=\inf \{a\,\,\in\,\,\mathbb{R}, \,\phi_{21}^{ \bar{a} }\,\ge\,\phi_{11}\quad\mbox{on}\quad \mathbb{R} \quad\mbox{for\,\,all}\quad \bar{a} \,\ge\,a \}  \\ 
 a^{*}_{2}=\inf \{a\,\,\in\,\,\mathbb{R}, \,\phi_{22}^{ \bar{a} }\,\ge\,\phi_{12}\quad\mbox{on}\quad \mathbb{R} \quad\mbox{for\,\,all}\quad \bar{a} \,\ge\, a  \} . 
   \end{cases}$$
As in the proof of Lemma \ref{Lem 15}, we can prove that $a_1^{*}=a_2^*=a^*.$
We want to prove that $a^{*}=0.$ By definition of $a^*$,  there exists some $x_{0}$ such that 
\begin{equation}\label{eq:004}
\left\{
\begin{aligned}
&\phi_1^{a^{*}}\geq\phi_1\quad\mbox{on}\quad\R\\
&\phi_2^{a^{*}}\geq\phi_2\quad\mbox{on}\quad\R\\
&\phi_1^{a^{*}}(x_{0})=\phi_1(x_{0})\\
&\phi_2^{a^{*}}(x_{0})=\phi_2(x_{0}).
\end{aligned}
\right.
\end{equation}
Then, using the Strong Maximum Principle Lemma \ref{lemma 10} or Lemma \ref{lem 14} (note that, since $c\ne 0$, $\phi_1,\; \phi_2\in C^1$ and so the first equation of \eqref{eq 5} gives that $\phi_1\in C^2$, then we can apply Lemma \ref{lem 14}), we get that $\phi_1^{a^*}=\phi_1$, i.e., $\phi_1$
is periodic of period $a^*$. But $\phi_1(-\infty)=0$ and $\phi_1(+\infty)=1$, thus $a^*=0$.

\paragraph{Step $2$: $(\phi_1,\phi_2)$ are increasing.}
Let $a>0,$ we want to show that $\phi_1(x+a)>\phi_1(x)$ and $\phi_2(x+a)>\phi_2(x)$. From Step $1,$ we have $\phi_1(x+a)\geq\phi_1(x)$ and $\phi_2(x+a)\geq\phi_2(x)$.
Assume that there exists $x_{0}$ such that 
$$\phi_1(x_{0}+a)=\phi_1(x_{0})\quad{\rm or}\quad \phi_2(x_{0}+a)=\phi_2(x_{0}).$$
As in the proof of Lemma \ref{Lem 15}, we can prove that
$$\phi_1(x_{0}+a)=\phi_1(x_{0})\quad{\rm and}\quad \phi_2(x_{0}+a)=\phi_2(x_{0}).$$
Using the Strong Maximum Principle (Lemma \ref{lemma 10} or Lemma \ref{lem 14}), we get that $a=0,$ which is a contradiction. Thus
$$\phi_1(x+a)>\phi_1(x)
\quad{\rm and}\quad \phi_2(x+a)>\phi_2(x)
\quad\mbox{on}\quad\R\quad\mbox{for any}\quad a>0.$$
\end{proof}
\begin{proof}[Proof of Theorem \ref{th 2bis}]
The proof of the uniqueness of the velocity is a direct consequence of Proposition \ref{pro 4}. For the uniqueness of the profiles, it suffices to use Lemma \ref{Lem 15} and the Strong Maximum Principle (Lemma \ref{lemma 10} or Lemma \ref{lem 14}). Note that, since $c\ne 0$, $\phi_1,\; \phi_2\in C^1$ and so the first equation of \eqref{eq 5} gives that $\phi_1\in C^2$, then we can apply Lemma \ref{lem 14}. Finally the strict monotonicity of $\phi_1$ and $\phi_2$ follows from Lemma \ref{le20}.
\end{proof}

\vspace{10mm}

\noindent{\bf ACKNOWLEDGMENTS}

 The first author was partially supported by ANR AMAM (ANR 10-JCJC 0106), ANR IDEE (ANR-2010-0112-01) and  ANR HJNet (ANR-12-BS01-0008-01).

\end{document}